\documentclass[a4paper, 12pt]{article}
\usepackage[utf8]{inputenc}
\usepackage[T2A]{fontenc}
\usepackage{amsmath,amssymb,amsthm, amscd}
\usepackage{mathtools}
\usepackage{authblk}
\usepackage{graphicx}
\usepackage{cancel}
\graphicspath{{pictures/}}
\graphicspath{{./images/}}

\usepackage{longtable}
\usepackage{array}
\usepackage[margin=1in]{geometry}
\usepackage{appendix}
\usepackage{amsmath,amssymb}

\usepackage[backend=biber, style=numeric, sorting=none, doi=true, url=true, isbn=true, eprint=true]{biblatex}

\newcolumntype{C}[1]{>{\centering\arraybackslash}p{#1}}
\newcolumntype{L}[1]{>{\raggedright\arraybackslash}p{#1}}

\DeclareGraphicsExtensions{.pdf,.png,.jpg}
\usepackage[russian,english]{babel}
\usepackage{csquotes}
\usepackage[unicode, pdftex]{hyperref}
\hypersetup{colorlinks=true, linkcolor=blue, filecolor=magenta, urlcolor=blue}
\newtheorem{Th}{Theorem}[section]
\newtheorem{stat}{Statement}[section]
\newtheorem{prop}{Proposition}[section]
\newtheorem{lem}{Lemma}[section]
\newtheorem{rem}{Remark}[section]

\newtheorem{constr}{Construction}
\newtheorem{corollary}{Corollary}[section]
\newtheorem{quest}{Question}[section]

\begin{document}

\title{The Number of Polytope Trees and Beyond}

\author[1]{Ali Aliev\thanks{ alievali0278@gmail.com}}
\author[1,2]{Natalia Amburg\thanks{amburg@mccme.ru}}

\affil[1]{HSE University, Moscow, Russia}
\affil[2]{NRC “Kurchatov Institute”, Moscow, Russia}

\date{}
\maketitle




\begin{abstract}
We study the symmetry-weighted enumeration of complexes formed by gluing copies of a regular polytope. For polytope trees and complexes obtained by attaching such trees to a fixed initial complex, we derive cut-and-join recurrences, differential equations for generating functions, explicit counting formulas, and asymptotic estimates. We then extend the method to complexes with cycles in their dual graphs, enumerating them by the numbers of polytopes and free facets. We derive recurrence relations and partial differential equations for the corresponding bivariate generating functions. In this setting, we obtain recurrences for connected configurations and explicit coefficient formulas for possibly disconnected configurations, both with and without a distinguished initial complex.
\end{abstract}

\noindent\textbf{Keywords.}
Polytope trees; inductive and recursive triangulations;
enumeration of polytope complexes; automorphism groups; cut-and-join recurrences; generating functions.

\noindent\textbf{2020 Mathematics Subject Classification.}
\par\smallskip
\noindent\textit{Primary:} 05A15.
\par
\noindent\textit{Secondary:} 05A19, 05C30, 57Q15, 20B25, 05E45.

\tableofcontents

\section{Introduction}

Enumerating non‑isomorphic simplicial complexes and triangulations --- where objects are weighted inversely to the size of their automorphism groups, as in Burnside's Lemma and the Pólya Enumeration Theorem --- is a fundamental problem in combinatorial geometry and topology. While exact results exist for specific cases --- such as triangulations of orientable surfaces \cite{GS06, GJ08, BGR08} --- the general higher-dimensional problem remains exceptionally difficult, although partial results have been obtained in \cite{CP21, BZ11, R13}.

Beyond pure mathematics, this problem has applications in physics. For example, in string theory and quantum gravity, integrating over the space of metrics can sometimes be approximated by summing over a dense set of piecewise-flat triangulations \cite{Ambjorn09}.

In this work, we count the number of polytope trees: complexes formed from \(n\) polytopes, where the dual graph of the complex is a tree. The idea of constructing such objects inductively is inspired by random graph theory, where vertices are added sequentially and edges are generated according to a specific probability distribution (see \cite{ER60, Ra11}). In this paper, however, we set aside the probabilistic aspects and focus on the combinatorial side.

In random graph models, the initial graph can be arbitrary; analogously, we may start from any fixed initial complex and attach polytope trees to its facets. This yields what we call \textit{inductive complexes} (or \textit{inductive triangulations}). The attachment process does not change the topology of the underlying manifold. Using our exact enumerative results for polytope trees, we compute the number of non-isomorphic inductive complexes. 

In a piecewise-Euclidean realization, attaching a regular tetrahedron along a boundary triangle replaces that triangle by the other three faces, changing the boundary metric locally while preserving the topology of the surface. This gives a combinatorially controlled family of local metric modifications, providing a natural point of contact with dynamical-triangulation models of two-dimensional quantum gravity and string theory \cite{Ambjorn09}.

Moreover, we expand our theory by allowing cycles in the dual graph of complexes.

In this paper, we use the standard definitions of triangulations and simplicial complexes \cite{LZ04, Zi95}; any additional constructions will be introduced as needed.

The remainder of this paper is organized as follows: Section~\ref{sec:construction} introduces the construction. In Section~\ref{sec:R_surfaces}, we enumerate tetrahedron trees, and in Section~\ref{sec:inductive_triangulations}, we find the number of inductive triangulations. Section~\ref{sec:general_inductive} generalizes the construction to arbitrary polyhedra. Section~\ref{sec:cycles} extends the framework to complexes whose dual graphs contain cycles. Section~\ref{sec:segment} treats the one-dimensional case and provides several illustrative examples. Finally, several appendices contain additional computations and auxiliary details.

\section{Construction of Inductive Triangulations} \label{sec:construction}

We introduce the following construction of inductive triangulations.

\begin{constr}
\textbf{Base:} At the initial step (step \(0\)), we have a fixed surface triangulation \(\mathcal{T}_0\) (two examples of sphere triangulations are shown in Figure~\ref{fig:sphere_triangulations})

\begin{figure}[!htbp]
    \centering
    \includegraphics[width=0.36\textwidth]{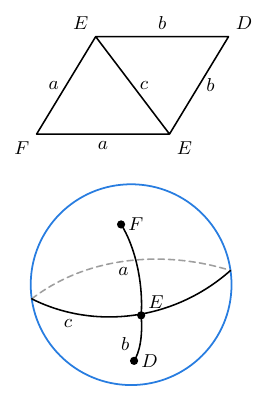}
    \hfill
    \includegraphics[width=0.36\textwidth]{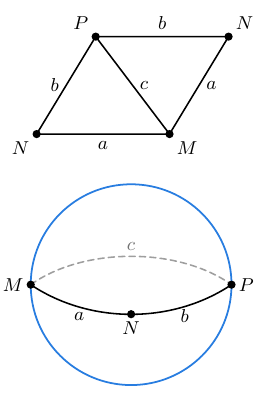}
    \caption{Two triangulations of the sphere from two triangles.}
    \label{fig:sphere_triangulations}
\end{figure}

\textbf{Inductive step:} At step \(n\), we have a triangulation \(\mathfrak{T}_n\). The triangulation \(\mathfrak{T}_{n+1}\) is constructed inductively as follows: we select a triangle from \(\mathfrak{T}_n\) and subdivide it into three triangles by drawing segments from each of its vertices to the centre of the triangle (the process is illustrated in Figure~\ref{fig:tetrahedron_tree}).
\end{constr}

For a fixed initial triangulation \(T_0\), this construction defines, at each step \(n\), the family of all triangulations obtainable from \(T_0\) by \(n\) subdivisions.

\begin{figure}[!htbp]
\centering
\includegraphics[width=0.66\textwidth]{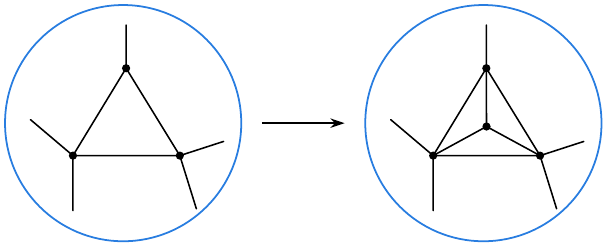}
\caption{The inductive step.}
\label{fig:tetrahedron_tree}
\end{figure}

Now, let us consider a trivial triangulation of the sphere consisting of two glued triangles, as shown on the right-hand side of Figure~\ref{fig:sphere_triangulations} (step \(0\)). We take one of these triangles and draw segments from its vertices to its centroid. Geometrically, subdividing a face in this manner is equivalent to gluing a tetrahedron to the free triangle in \(\mathbb{R}^3\) (see Figure~\ref{fig:tetrahedron2}). At step \(1\) we obtain a tetrahedron (see Figure~\ref{fig:tetrahedron1}). Inductively repeating this process, at step \(n\), the resulting triangulation is isomorphic to the complex formed by gluing \(n\) tetrahedra together along their faces. This construction is also considered in \cite{BZ11}; we refer to the resulting complex as a simplex tree.

Note that tetrahedra can be attached to any initial triangulation, as shown, for example, on the left‑hand side of Figure~\ref{fig:sphere_triangulations}. Moreover, we may consider an initial triangulation of an arbitrary manifold. We will call such triangulations \textit{inductive}. The triangulations obtained in this way are closely related to Apollonian networks and stacked polytopes; see \cite{BL22,HRS82}. We arrived at the constructions above independently, before becoming aware of these connections with earlier work. In the following sections, an explicit formula will be derived and more general constructions will be developed.
\begin{figure}[!htbp]
\centering
\includegraphics[width=0.83\textwidth]{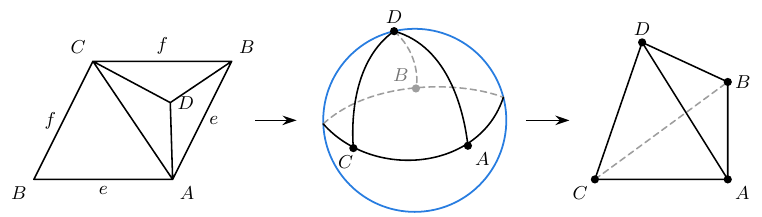}
\caption{Base of the induction for tetrahedron gluings.}
\label{fig:tetrahedron1}
\end{figure}

\begin{figure}[!htbp]
\centering
\includegraphics[width=0.58\textwidth]{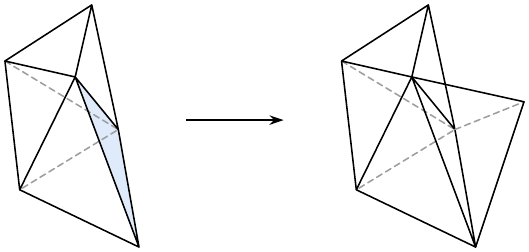}
\caption{Inductive step for tetrahedron gluings.}
\label{fig:tetrahedron2}
\end{figure}

\section{Tetrahedron Tree} \label{sec:R_surfaces}

Our primary goal is to derive an explicit formula for the number of non-isomorphic tetrahedron trees composed of \(n\) tetrahedra. Before doing so, let us explain the counting principle underlying our approach.

Let \(\mathcal{X}\) be a finite collection of objects equipped with an equivalence relation \(\sim\), where two objects are considered equivalent if they are isomorphic. Throughout this paper, unless explicitly stated otherwise, we consider the symmetry-weighted cardinality of a collection of isomorphism classes
\[
|\mathcal{X}/{\sim}|
:=
\sum_{[K]\in \mathcal{X}/{\sim}}
\frac{1}{|\operatorname{Aut}(K)|}.
\]
This weighted count naturally accounts for the varying sizes of automorphism groups and is the standard approach in enumerative combinatorics for such objects (see, e.g., \cite{BaezDolan2000, LZ04}). 

This formulation is closely related to the standard technique of introducing auxiliary structures --- such as rootings, labelings, or numberings --- which is commonly used to bypass automorphism issues (see, e.g., \cite{GJ08}) and obtain the symmetry-weighted enumeration of unmarked objects indirectly through the enumeration of suitably marked ones.

Throughout this paper, we will label the external hyperfaces of codimension 1. Let \(R_n\) denote the number of non-isomorphic labelings of the external faces of a tetrahedron tree that can be obtained at step \(n\). Gluing \(n\) tetrahedra into tree \(\Phi_{n}\) creates exactly \(n-1\) internal triangles, hence
\[
f_{2}^{\text{int}}(\Phi_{n})=n-1,\qquad f_{2}^{\text{ext}}(\Phi_{n})=4n-2(n-1)=2n+2.
\]
Let also \(\widehat{R}_n := \frac{R_n}{(2n+2)!}\).

\begin{rem}
To one unlabeled triangulation there correspond \( \frac{(2n+2)!}{\mid Aut \mid}\) labeled triangulations.
\end{rem}

\begin{stat} \label{stat:R_surfaces_recurrence}
\( R_1 = 2 \), and \( R_n \) satisfies the following recurrence relation  
\begin{equation}
(n-1)R_n = \frac{1}{2} \sum_{k=1}^{n-1} 3 \binom{2n+2}{2k+1} R_k R_{n-k}.  
\end{equation}
\end{stat}
\begin{proof}

The proof is based on a cut‑and‑join technique, similar to the recurrence relations developed in \cite{K92, AKS17}.
Fix a split \(n=k+(n-k)\) with \(1\le k\le n-1\). Every tree \(\Phi_{n}\) can be decomposed into two subtrees \(\Phi_{k}\) and \(\Phi_{n-k}\) glued together along one external triangle. There are exactly three ways (the order of the triangle's rotation group) to perform the gluing --- depending on the choice of the triangle's side along which the merging is made.  
The subtrees have \(2k+2\) and \(2(n-k)+2\) external triangles, respectively, so we need to distribute \(2k+2-1\) numbers on the figure \(\Phi_{k}\) (and accordingly \(2(n-k)+2-1\) numbers on the figure \(\Phi_{n-k}\)) (we subtract one because one label on the face along which the gluing occurs is no longer counted in \(\Phi_{n}\)). This gives us
\(\binom{2n+2}{2k+1}\) ways to do that.  
The symmetry \(k\leftrightarrow n-k\) counts each decomposition twice; so we need to divide by 2.  
Finally, every decomposition fixes a concrete seam (hyperface where cutting occurs), whereas the left-hand side of the recurrence is simply \(R_{n}\) --- the number of trees without a distinguished seam. To account for all possible seams we multiply by their total number: \(n-1\). The resulting recurrence is
\begin{equation} \label{eq:R_surfaces_recurrence}
(n-1)\,R_{n}=\frac{1}{2}\sum_{k=1}^{n-1}3\binom{2n+2}{2k+1}R_{k}R_{n-k}.  
\end{equation}
Figure~\ref{fig:R_recurrence_relation} illustrates the cut-and-join decomposition underlying the recurrence \eqref{eq:R_surfaces_recurrence}.

\begin{figure}[!htbp]
\centering
\includegraphics[scale=0.4]{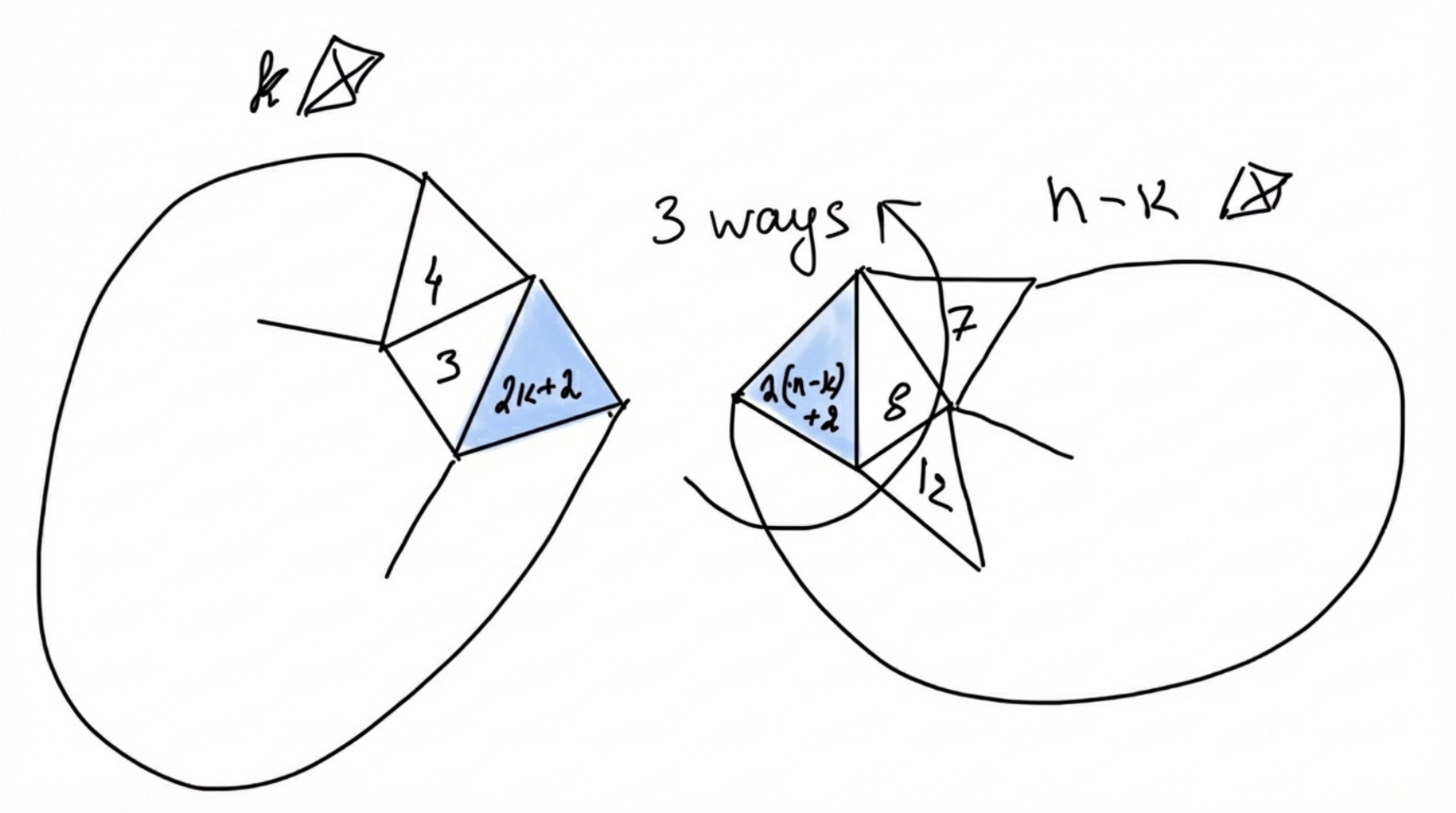}
\caption{Visual explanation for recurrence.}

\label{fig:R_recurrence_relation}

\end{figure}

It remains to find the initial condition: a tetrahedron has two numberings: one face is fixed and labeled \(1\), the remaining faces can be numbered in two ways: \(2,3,4\) and \(3,2,4\) clockwise (see Figure~\ref{fig:R_1}). Thus, \(R_{1} = 2\).

\begin{figure}[!htbp]
\centering
\includegraphics[width=0.52\textwidth]{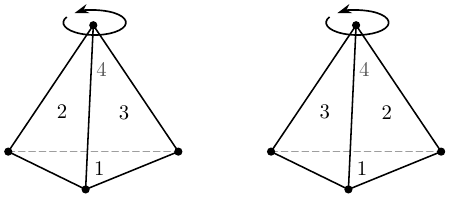}

\caption{Initial condition for tetrahedron tree numberings.}

\label{fig:R_1}

\end{figure}

\end{proof}

\begin{stat} \label{stat:R_surfaces_differential_equation}
Consider the exponential generating function \(\mathcal{R}_3(t) = \sum_{n=1}^{\infty} \dfrac{R_{n} \, t^{2n+2}}{(2n+2)!}\).
Then this function satisfies the following differential equation
\begin{equation} 
(\mathcal{R}_3')^2 = \frac{2}{3}\left(\frac{t\mathcal{R}_3'}{2}-2\mathcal{R}_3\right).
\end{equation}
\end{stat}

\begin{proof}
Let us rewrite the derived recurrence relation \eqref{eq:R_surfaces_recurrence} in terms of generating functions. Let \(\mathcal{R}_3(t) = \sum_{n=1}^{\infty} \dfrac{R_{n} \, t^{2n+2}}{(2n+2)!}\) and \(\mathcal{R}_3'(t) = \sum_{n=1}^{\infty} \dfrac{R_{n} \, t^{2n+1}}{(2n+1)!}\).
Squaring the derivative yields
\begin{equation*}
\begin{split}
(\mathcal{R}_3'(t))^2 
&= \sum_{n=2}^{\infty} \sum_{k=1}^{n-1} 
\frac{R_{k} \, t^{2k+1}}{(2k+1)!} \cdot \frac{R_{n-k} \, t^{2(n-k)+1}}{(2(n-k)+1)!} 
= \sum_{n=2}^{\infty} \sum_{k=1}^{n-1} 
\frac{\binom{2n+2}{2k+1} \, t^{2n+2}}{(2n+2)!} \, R_{k} R_{n-k}.
\end{split}
\end{equation*}
Using the recurrence \eqref{eq:R_surfaces_recurrence}, this simplifies to
\begin{align*}
(\mathcal{R}_3'(t))^2 
    &= \sum_{n=2}^{\infty} \frac{2(n-1) R_{n} \, t^{2n+2}}{3(2n+2)!} = \frac{2}{3} \sum_{n=1}^{\infty} \frac{(n-1) R_{n} \, t^{2n+2}}{(2n+2)!}\\
    &= \frac{2}{3} \left[ \sum_{n=1}^{\infty} \frac{(n+1) R_{n} \, t^{2n+2}}{(2n+2)!} 
                        - 2\sum_{n=1}^{\infty} \frac{R_{n} \, t^{2n+2}}{(2n+2)!} \right] \\
    &= \frac{2}{3} \left[ \frac{t\mathcal{R}_3'(t)}{2} - 2\mathcal{R}_3(t) \right],
\end{align*}
which establishes the claimed differential equation. This equation is a Lagrange (or Clairaut) equation \cite[Equation 2.1.5.5]{PZ03}. Let us also rewrite it in another form
\begin{equation} \label{eq:surfaces_differential_equation}
\mathcal{R}_3(t) = \frac{1}{4} \, t \, \mathcal{R}_3'(t) - \frac{3}{4} \, (\mathcal{R}_3'(t))^2.
\end{equation}
\end{proof}

\begin{Th} \label{thm:R_surfaces_solution}
Let 
\[
y = \sqrt[3]{ -\frac{t}{2} + \sqrt{ \frac{t^2}{4} - \frac{1}{27} } } + \sqrt[3]{ -\frac{t}{2} - \sqrt{ \frac{t^2}{4} - \frac{1}{27}}},
\] 
then the generating function \(\mathcal{R}_3(t)\) is
\begin{equation}
\mathcal{R}_3  = \frac{y^4}{12} - \frac{y^6}{6}.
\end{equation}
\end{Th}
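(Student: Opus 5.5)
The plan is to solve the Lagrange--Clairaut equation \eqref{eq:surfaces_differential_equation} by the standard parametric method: treat $p:=\mathcal{R}_3'(t)$ as the parameter and then fix the integration constant from the initial terms of the series. The initial condition comes from Statement~\ref{stat:R_surfaces_recurrence}. Since $R_1=2$, we have
\[
\mathcal{R}_3(t)=\frac{t^4}{12}+O(t^6), \qquad p=\frac{t^3}{3}+O(t^5).
\]

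First I differentiate $\mathcal{R}_3=\frac14 tp-\frac34 p^2$ with respect to $t$. This gives $p=\frac14 p+\frac14 t p'-\frac32 p p'$, which simplifies to
\[
3p=p'(t-6p).
\]
Next I invert the roles of the variables and regard $t$ as a function of $p$. This is legitimate near $t=0^+$, because $p\sim t^3/3$ is monotone there. The equation becomes the linear ODE
\[
\frac{dt}{dp}-\frac{t}{3p}=-2.
\]
Using the integrating factor $p^{-1/3}$, its general solution is
\[
t=Cp^{1/3}-3p.
\]
The singular solutions ($p\equiv 0$, and the envelope obtained from $t=6p$) are ruled out because they are incompatible with $p\sim t^3/3$.

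To fix $C$, I substitute the leading term $p\sim t^3/3$, which forces $C\cdot 3^{-1/3}=1$, so $C=3^{1/3}$. I then set $p=y^3/3$. The relation becomes
\[
t=y-y^3, \quad\text{that is,}\quad y^3-y+t=0.
\]
Cardano's formula for the depressed cubic $y^3+Py+Q=0$ with $P=-1$, $Q=t$ gives exactly the expression for $y$ in the statement. Finally, I substitute $p=y^3/3$ and $t=y-y^3$ back into \eqref{eq:surfaces_differential_equation}:
\[
\mathcal{R}_3=\frac{(y-y^3)y^3}{12}-\frac{3}{4}\cdot\frac{y^6}{9}=\frac{y^4}{12}-\frac{y^6}{6}.
\]

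The step I expect to be the main obstacle is justifying the choice of root and branch. For small $t$ we have $t^2/4-1/27<0$, so the Cardano radicals are complex. One must specify that the cube roots are taken as complex conjugates (so that their product is $1/3$), and that the resulting root is the unique one that is analytic at $0$ with $y=t+O(t^3)$; the other two roots tend to $\pm1$. I would handle this in two steps. First, I would note that $y^3-y+t=0$ has, by the implicit function theorem (or Lagrange inversion), a unique power-series solution $y(t)=t+t^3+\dots$. Second, I would check that this solution agrees with the stated Cardano expression under the conjugate-branch convention. As a consistency check, the coefficients of $y^4/12-y^6/6$ should reproduce the recurrence of Statement~\ref{stat:R_surfaces_recurrence}. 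Since that recurrence determines $R_n$ uniquely from $R_1$, and the parametric solution satisfies \eqref{eq:surfaces_differential_equation} with the same leading term, uniqueness of the formal power-series solution completes the argument.
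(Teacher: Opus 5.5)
Your proposal is correct and follows the paper's route: differentiate the Lagrange equation, invert to the linear ODE $dt/dp - t/(3p) = -2$, obtain $t = Cp^{1/3} - 3p$, fix $C^3 = 3$ from $R_1 = 2$, reduce to $y^3 - y + t = 0$ and apply Cardano. The only variations are that you recover $\mathcal{R}_3$ by substituting $p = y^3/3$, $t = y - y^3$ into the undifferentiated equation (which avoids the paper's integration of $p\,dt$ in the variable $y$ and its constant), and that you close with uniqueness of the formal power-series solution starting at $t^4/12$; your concern about branches is also well founded and sharper than the paper's treatment, since with principal cube roots the stated expression gives $y(0)=1$ (one of the paper's $y_{2,3}$, not $y_1$, is the root vanishing at $0$), so the formula holds only for the conjugate pair of cube roots that produces the analytic solution $y = t + t^3 + \dots$, exactly as you propose to specify.
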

\begin{proof}
Differentiate equation \eqref{eq:surfaces_differential_equation}
\begin{gather*}
\mathcal{R}_3' = \frac{1}{4} \mathcal{R}_3' + \frac{1}{4} t \mathcal{R}_3'' - \frac{3}{4} \cdot 2 \mathcal{R}_3' \mathcal{R}_3'', \\
\frac{3}{4} \mathcal{R}_3' = \mathcal{R}_3'' \left( \frac{1}{4} t - \frac{3}{2} \mathcal{R}_3' \right).
\end{gather*}
Introduce the parameter \(p = \mathcal{R}_3'\) 
\begin{gather*}
\frac{\mathrm{d}p}{\mathrm{d}t} \left(\frac{1}{4}t - \frac{3}{2} p\right) = \frac{3}{4} p. \\  
\frac{\mathrm{d}t}{\mathrm{d} p} = \frac{\frac{1}{4}t - \frac{3}{2} p}{\frac{3}{4} p} = \frac{4 (\frac{1}{4}t - \frac{3}{2} p)}{3p} =  \frac{t-6p}{3p} =  \frac{t}{3p} - 2.
\end{gather*}
Particular solutions are clearly not suitable (the check is omitted). We solve the resulting differential equation for \(t\) as a function of \(p\) using the method of variation of constants. The solution is omitted; the answer is given directly
\begin{gather*}
t = C_1 p^{\frac{1}{3}} - 3p.
\end{gather*}
Perform the reverse substitution
\begin{gather*}
t = C_1 (\mathcal{R}_3')^{\frac{1}{3}} - 3\mathcal{R}_3'. \\
C_1 (\mathcal{R}_3')^{\frac{1}{3}} = t + 3\mathcal{R}_3'.
\end{gather*}
\begin{equation} \label{eq:cubic_equation_tetrahedron}
C_1^3 \mathcal{R}_3' = (t + 3\mathcal{R}_3')^3. 
\end{equation}
This is a cubic equation in \(\mathcal{R}_3'\). We will solve it using Cardano's formula, but first we find the value of \(C_1^3\) and bring it to a convenient form.

Our generating function has coefficients for degrees \(\leq 3\) equal to 0, and the coefficient for degree \(4\) is \(\frac{R_{1}}{4!}\). This yields initial conditions
\[
\mathcal{R}_3(0) = \mathcal{R}_3'(0) = \mathcal{R}_3''(0) = \mathcal{R}_3'''(0) = 0, \quad \mathcal{R}_3^{(4)}(0) = R_1 = 2.
\]
Expand the brackets, differentiate \eqref{eq:cubic_equation_tetrahedron} 3 times, and substitute \(0\)
\begin{gather*}
C_1^3 \mathcal{R}_3' = t^3 + 9t^2 \mathcal{R}_3' + 27t (\mathcal{R}_3')^2 + 27(\mathcal{R}_3')^3. \\
C_1^3 \mathcal{R}_3^{(4)}(0) - \left(t^3 + 9t^2 \mathcal{R}_3' + 27t (\mathcal{R}_3')^2 + 27(\mathcal{R}_3')^3\right)''' \Big|_{t=0} = 0. \\
2C_1^3 - 6 - 0 - 0 - 0 = 0. \\
C_1^3 = 3.
\end{gather*}
Differentiating the expression \( t^3 + 9t^2 \mathcal{R}_3' + 27t (\mathcal{R}_3')^2 + 27(\mathcal{R}_3')^3 \) with respect to \( t \) and then evaluating at \( t=0 \) yields \(6\), because all terms except \(t^3\) vanish (either due to multiplication by \(t^i\), or multiplication by \(\mathcal{R}^{(j)}_3,\) where \(j = 1,2,3\)). Now make the substitution
\begin{equation} \label{eq:cubic_y_substitution}
y = t+3\mathcal{R}_3', \quad \mathcal{R}_3' = \frac{y-t}{3}. 
\end{equation}
Then
\begin{equation} \label{eq:y_cubic_equation}
y^3 = y-t.
\end{equation}

To solve this equation, we apply Cardano's formula. The equation has the form \(y^3 + p y + q = 0\), where \(p = -1\), \(q = t\). The discriminant is
\[
D = \left( \frac{t}{2} \right)^2 + \left( \frac{-1}{3} \right)^3 = \frac{t^2}{4} - \frac{1}{27}.
\]
Introduce
\[
\alpha = \sqrt[3]{ -\frac{t}{2} + \sqrt{D}}, \quad \beta = \sqrt[3]{ -\frac{t}{2} - \sqrt{D}}.
\]
Then the solutions are
\[
y_1 = \alpha + \beta = \sqrt[3]{ -\frac{t}{2} + \sqrt{ \frac{t^2}{4} - \frac{1}{27} } } + \sqrt[3]{ -\frac{t}{2} - \sqrt{ \frac{t^2}{4} - \frac{1}{27}}}; \quad y_{2,3} = -\frac{\alpha + \beta}{2} \pm i \frac{\alpha - \beta}{2} \sqrt{3}.
\]
Since \(y(0) = 0\), the roots \(y_{2,3}(0) = -\frac{\alpha(0) + \beta(0)}{2} \pm i \frac{\alpha(0) - \beta(0)}{2} \sqrt{3} \neq 0\), meaning they are not suitable.
Note that
\[
\sqrt{ \frac{t^2}{4} - \frac{1}{27} } = \sqrt{ \frac{27t^2 - 4}{108} } = \frac{1}{6\sqrt{3}} \sqrt{27t^2 - 4},
\]
so \( y \) takes the form
\begin{gather*}
y = \sqrt[3]{ -\frac{t}{2} + \frac{1}{6\sqrt{3}} \sqrt{27t^2 - 4} } + \sqrt[3]{ -\frac{t}{2} - \frac{1}{6\sqrt{3}} \sqrt{27t^2 - 4} }. \\
y = \sqrt[3]{ \frac{1}{6\sqrt{3}} \left( -3\sqrt{3}t + \sqrt{27t^2 - 4} \right) } + \sqrt[3]{ \frac{1}{6\sqrt{3}} \left( -3\sqrt{3}t - \sqrt{27t^2 - 4} \right) }. \\
y = \frac{1}{\sqrt[3]{2} \cdot \sqrt{3}} \left( \sqrt[3]{ -3\sqrt{3}t + \sqrt{27t^2 - 4} } + \sqrt[3]{ -3\sqrt{3}t - \sqrt{27t^2 - 4} } \right).
\end{gather*}
Hence
\begin{equation} \label{eq:R_3_explicit_formula}
\mathcal{R}'_3 = -\frac{t}{3} + \frac{ \sqrt[3]{ -3\sqrt{3}t + \sqrt{27t^2 - 4} } + \sqrt[3]{ -3\sqrt{3}t - \sqrt{27t^2 - 4} } }{3 \sqrt[3]{2} \cdot \sqrt{3} }.    
\end{equation}
Now, let us also express \(\mathcal{R}_3\) in terms of \(y\). Recall that
\begin{gather*}
t = y - y^3 \implies \mathrm{d}t = (1 - 3y^2)  \mathrm{d}y. \\
\mathrm{d}\mathcal{R}_3 = \mathcal{R}_3'  \mathrm{d}t = \frac{y - t}{3}  \mathrm{d}t = \frac{y^3}{3}(1 - 3y^2)  \mathrm{d}y. \\
\mathcal{R}_3 = \int \left( \frac{y^3}{3} - y^5 \right)  \mathrm{d}y = \frac{1}{3} \cdot \frac{y^4}{4} - \frac{y^6}{6} + C = \frac{y^4}{12} - \frac{y^6}{6} + C.
\end{gather*}
Let us also find \(C\) using initial conditions
\[
0 = \mathcal{R}_3(0) = \frac{y^4(0)}{12} - \frac{y^6(0)}{6} + C \implies C = 0.
\]
\end{proof}

\begin{corollary}
The number of non-isomorphic numberings of triangulations \(R_n\) is equal to \(\mathcal{R}_3^{(2n+2)}(0)\), where \(\mathcal{R}_3(t)\) is the generating function from statement \ref{stat:R_surfaces_differential_equation}.
\end{corollary}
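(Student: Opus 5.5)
The corollary follows from the definition of the exponential generating function in Statement~\ref{stat:R_surfaces_differential_equation} and the closed form of Theorem~\ref{thm:R_surfaces_solution}. By definition,
\[
\mathcal{R}_3(t)=\sum_{n\ge 1}\frac{R_n\,t^{2n+2}}{(2n+2)!},
\]
so only even powers $t^{2m}$ with $m\ge 2$ occur. The series has a positive radius of convergence, because $y(t)$ is the branch of $y^3=y-t$ with $y(0)=0$; by the implicit function theorem it is analytic near $0$, since $\partial_y(y^3-y)=-1\neq 0$ at $y=0$. Hence $\mathcal{R}_3$ can be differentiated term by term at $t=0$. Differentiating $2n+2$ times kills every monomial of lower degree. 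Every monomial of higher degree still carries a positive power of $t$ and so vanishes at $0$. The single surviving term is $\frac{R_n}{(2n+2)!}\cdot(2n+2)!$, which gives $\mathcal{R}_3^{(2n+2)}(0)=R_n$.

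The only real point to check is that the function given by Theorem~\ref{thm:R_surfaces_solution} really is the analytic germ whose Taylor series is the one above. I would establish this in three steps.

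\begin{enumerate}
\item Use the branch of the cube roots in the Cardano expression that yields the analytic solution $y=t+t^3+3t^5+\dots$ of $y=t+y^3$, obtained by Lagrange inversion.
\item Note that the recurrence of Statement~\ref{stat:R_surfaces_recurrence}, together with $R_1=2$, determines all coefficients uniquely.
\item Since the differential equation was derived coefficientwise from that recurrence, the power series solution with the given initial terms is unique. The analytic function $\frac{y^4}{12}-\frac{y^6}{6}$ satisfies the same equation with the same initial data, so it coincides with $\mathcal{R}_3$.
\end{enumerate}

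As a sanity check, $y^4/12=t^4/12+\dots$, which gives $\mathcal{R}_3^{(4)}(0)=24/12=2=R_1$.

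I expect the only subtle step to be this identification of the branch. The rest is the standard extraction of a coefficient from an exponential generating function.
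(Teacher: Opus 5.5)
Your proof is correct and is essentially the paper's implicit reasoning: the corollary is just coefficient extraction from the exponential generating function $\mathcal{R}_3(t)=\sum_{n\ge1}R_n t^{2n+2}/(2n+2)!$, which already holds at the level of formal power series, so your convergence remark is not needed (and as written it presupposes the identification you only establish afterwards). Your additional identification of $\frac{y^4}{12}-\frac{y^6}{6}$ with this series is sound and makes explicit a step the paper leaves implicit: it uses uniqueness of the formal solution of the ODE once the coefficients up to $t^4$ are fixed. Your caution about the Cardano branch is also justified, since with principal cube roots the expression for $y$ in Theorem~\ref{thm:R_surfaces_solution} equals $1$, not $0$, at $t=0$.
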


To find an explicit formula for \(R_n\), we will apply the Lagrange inversion theorem.

\begin{Th}[3.6.6 \& 3.6.7 in \cite{AM83}, Theorem~A.2 in \cite{fs2009}] \label{thm:lagrange_bürmann}
Let $\phi(w) = \sum_{k\ge 0} \phi_k w^k$ be a formal power series with $\phi_0 \neq 0$,
and let $y(z)$ be the unique solution of the equation
\[
y = z\,\phi(y).
\]
Then for any $n\ge 1$ and any formal power series $H(w)$,
the coefficients of $H(y(z))$ are given by the Lagrange-Bürmann formula:
\[
[z^{\,n}]\,H(y(z)) = \frac{1}{n}\,[w^{\,n-1}] \left( H'(w)\,\phi(w)^n \right).
\]   
\end{Th}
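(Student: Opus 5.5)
The statement just quoted is the classical Lagrange--Bürmann inversion theorem, cited from \cite{AM83, fs2009}. I would prove it purely formally, using residues of Laurent series, with no analysis.

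\textbf{Existence and uniqueness.} Write $y(z)=\sum_{m\ge1} y_m z^m$. Comparing coefficients of $z^m$ in $y=z\,\phi(y)$ gives $y_1=\phi_0$. For $m\ge2$ it expresses $y_m$ as a polynomial in $\phi_0,\dots,\phi_{m-1}$ and $y_1,\dots,y_{m-1}$. So a unique solution with $y(0)=0$ exists, and since $\phi_0\neq0$ it has $y_1\neq0$. Hence $y$ is compositionally invertible, with inverse $z=w/\phi(w)$.

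\textbf{Two facts about formal residues.} For Laurent series $F(w)$, write $\operatorname{Res}_w F=[w^{-1}]F$.
\begin{itemize}
\item[(i)] $\operatorname{Res}_w F'(w)=0$ for every Laurent series $F$.
\item[(ii)] If $w=g(z)$ with $g(z)=g_1z+\dots$ and $g_1\ne0$, then $\operatorname{Res}_z F(g(z))g'(z)=\operatorname{Res}_w F(w)$.
\end{itemize}
Fact (i) is immediate. For (ii), it suffices by linearity to check $F=w^k$. For $k\neq-1$ the integrand $g^kg'$ is a derivative, so (i) applies. For $k=-1$, write $g'/g=1/z+(\log(g/(g_1z)))'$; the second term is the derivative of a genuine power series, since $g/(g_1z)$ has constant term $1$, so the residue is $1$.

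\textbf{Main computation.} For $n\ge1$,
\[
[z^n]H(y(z))=\operatorname{Res}_z z^{-n-1}H(y(z))=\frac1n\operatorname{Res}_z z^{-n}\frac{d}{dz}H(y(z)),
\]
where the second equality is integration by parts via (i) applied to $z^{-n}H(y)$. This rewrites as $\frac1n\operatorname{Res}_z H'(y)\,y'\,z^{-n}$. Now change variables $w=y(z)$, so that $z=w/\phi(w)$, and apply (ii). The expression becomes
\[
\frac1n\operatorname{Res}_w H'(w)\,\phi(w)^n w^{-n}=\frac1n[w^{n-1}]\bigl(H'(w)\phi(w)^n\bigr),
\]
as claimed.

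The only genuinely delicate point is the change of variables (ii), specifically the logarithmic case $k=-1$. This is exactly where $\phi_0\neq0$ is used: it guarantees $g_1\ne0$, so $\log(g/(g_1z))$ is a well-defined power series.
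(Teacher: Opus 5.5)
The paper does not prove this theorem. It quotes it from Abramowitz--Stegun and Flajolet--Sedgewick and then uses it as a black box.

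Your argument is correct. It is the standard formal-residue proof, i.e.\ the algebraic counterpart of the usual analytic proof (Cauchy's coefficient formula, integration by parts, substitution $z=w/\phi(w)$ in a contour integral).

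Compared with the citation, your route is self-contained and needs no convergence or analyticity hypotheses. It works verbatim in $\mathbb{C}[[z]]$, indeed over any field of characteristic zero, which you tacitly use when dividing by $n$ and by $k+1$. That is exactly the formal setting in which the paper applies the result.

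It also directly yields the variants the paper actually uses:
\begin{itemize}
\item the form with $t=g(z)$, $g'(0)\neq 0$, in the proofs of Theorems~\ref{thm:T_surfaces_formula} and \ref{thm:general_T}: take $\phi(x)=x/g(x)$;
\item the shifted form $\widetilde y=1+t\,\phi(\widetilde y)$ in the appendix: apply your result to $\widetilde y-1$ with $\phi(1+\cdot)$.
\end{itemize}

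Two small points are worth a sentence each:
\begin{itemize}
\item The phrase ``by linearity'' in (ii) has to cover infinite sums $F=\sum_{k\ge -N} f_k w^k$. This is fine because $g^k g'$ has $z$-adic order exactly $k$, so only the finitely many indices $-N\le k\le -1$ contribute to $[z^{-1}]$.
\item The hypothesis $\phi_0\neq 0$ is not used only for the logarithm in the case $k=-1$. It is also what lets you write $z^{-n}=\phi(y)^n\,y^{-n}$ with $y^{-1}$ a genuine Laurent series.
\end{itemize}
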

In particular, we need well-known applications of the Lagrange inversion formula.
\begin{corollary}[Section 3.3 in \cite{Gessel2016}] \label{cor:lagrange_application1}
Let \(d\) be an integer with \(d\ge 2\). For the equation given by
\begin{equation} \label{eq:widetilde_y}
\widetilde{y} = 1 + s\,\widetilde{y}^{\,d}, \quad \widetilde{y}(0)=1,
\end{equation}
there exists a series expansion
\begin{equation} \label{eq:widetilde_y_series}
\widetilde{y}(s) = \sum_{n=0}^{\infty} \frac{1}{(d-1)n+1}\binom{dn}{n}\,s^n.  
\end{equation} 

\end{corollary}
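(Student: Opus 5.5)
We prove Corollary~\ref{cor:lagrange_application1} by rewriting \eqref{eq:widetilde_y} in the form $y = z\,\phi(y)$ of Theorem~\ref{thm:lagrange_bürmann} and reading off the coefficients. Set $u = \widetilde{y} - 1$, so that $u(0)=0$. Then \eqref{eq:widetilde_y} becomes
\[
u = s\,(1+u)^d .
\]
This is exactly the setting of Theorem~\ref{thm:lagrange_bürmann} with $z=s$ and $\phi(w) = (1+w)^d$. Since $\phi_0 = 1 \neq 0$, the hypotheses hold, and $u(s)$ is the unique formal power series solution with $u(0)=0$. Consequently $\widetilde{y} = 1+u$ is the unique formal solution of \eqref{eq:widetilde_y} with $\widetilde{y}(0)=1$.

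Next we apply the Lagrange--Bürmann formula with $H(w) = 1+w$, so $H'(w) = 1$. For $n\ge 1$ this gives
\[
[s^n]\,\widetilde{y}(s) = \frac{1}{n}\,[w^{n-1}]\,(1+w)^{dn} = \frac{1}{n}\binom{dn}{n-1}.
\]
The constant term $[s^0]\,\widetilde{y} = 1$ comes directly from the initial condition, and it agrees with the claimed formula at $n=0$.

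It remains to put the coefficient in the stated form. Writing out the binomial,
\[
\frac{1}{n}\binom{dn}{n-1} = \frac{(dn)!}{n\,(n-1)!\,((d-1)n+1)!} = \frac{1}{(d-1)n+1}\cdot\frac{(dn)!}{n!\,((d-1)n)!} = \frac{1}{(d-1)n+1}\binom{dn}{n}.
\]
This is \eqref{eq:widetilde_y_series}.

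The argument has no real obstacle. The only points needing care are the shift $u=\widetilde{y}-1$, which is what puts the equation in the form the theorem requires, and the uniqueness remark, which ensures that the formal solution found is the intended branch with $\widetilde{y}(0)=1$.
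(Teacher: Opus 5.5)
Your proof is correct: the shift $u=\widetilde{y}-1$ gives $u=s(1+u)^d$, and Lagrange--B\"urmann with $\phi(w)=(1+w)^d$ and $H(w)=1+w$ yields $\frac{1}{n}\binom{dn}{n-1}=\frac{1}{(d-1)n+1}\binom{dn}{n}$, with the constant term $1$ matching the $n=0$ case. The paper gives no proof of its own and only cites \cite{Gessel2016}; your argument is the standard Lagrange-inversion derivation behind that citation, and it is the same computation the paper carries out in Appendix~\ref{sec:another_way_R_T}, where it expands around $u=1$.
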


\begin{corollary} \label{cor:lagrange_application2}
The unique formal power series \(y(t)\) with \(y(0)=0\) satisfying \(y^d - y + t = 0\) is given by
\begin{equation} \label{eq:y_series}
y(t) = \sum_{n=0}^{\infty} \frac{1}{(d-1)n+1} \binom{dn}{n}\, t^{(d-1)n+1}.   
\end{equation}
\end{corollary}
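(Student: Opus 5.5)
We reduce Corollary~\ref{cor:lagrange_application2} to Corollary~\ref{cor:lagrange_application1} by a rescaling that turns the equation \(y^d - y + t = 0\) into the form \eqref{eq:widetilde_y}.

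\textbf{Step 1: uniqueness.} Write \(y = t + y^d\). Comparing coefficients shows that \([t^m]y\) is determined by lower-order coefficients, since \(y^d\) with \(y(0)=0\) only involves coefficients of index \(<m\) in degree \(m\). Hence there is a unique formal power series solution with \(y(0)=0\). In particular \(y = t + O(t^2)\), so \(y/t\) is a formal power series with constant term \(1\).

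\textbf{Step 2: substitution.} Set \(y(t) = t\,\widetilde y(s)\) with \(s = t^{d-1}\). Substituting gives \(t\widetilde y = t + t^d\widetilde y^{\,d}\). Dividing by \(t\), which is legitimate since both sides are divisible by \(t\), we obtain \(\widetilde y = 1 + t^{d-1}\widetilde y^{\,d} = 1 + s\,\widetilde y^{\,d}\).

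Conversely, let \(\widetilde y(s)\) be the unique solution of \eqref{eq:widetilde_y} with \(\widetilde y(0)=1\). Then \(t\,\widetilde y(t^{d-1})\) is a formal power series in \(t\) vanishing at \(0\) that satisfies \(y^d - y + t = 0\). By Step 1 it equals \(y(t)\).

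\textbf{Step 3: expand.} Insert the series \eqref{eq:widetilde_y_series}:
\[
y(t) = t\sum_{n\ge0}\frac{1}{(d-1)n+1}\binom{dn}{n}t^{(d-1)n},
\]
which is exactly \eqref{eq:y_series}.

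The only delicate point is justifying the substitution \(s=t^{d-1}\) at the level of formal power series, i.e.\ that composition with \(t^{d-1}\) is well defined and preserves the functional equation. This is immediate because \(t^{d-1}\) has zero constant term.

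As a check, one could instead apply Theorem~\ref{thm:lagrange_bürmann} directly with \(\phi(w) = 1/(1-w^{d-1})\), which comes from rewriting \(y(1-y^{d-1}) = t\) as \(y = t\,\phi(y)\), and \(H(w)=w\). The coefficient \([t^m]y = \frac1m[w^{m-1}](1-w^{d-1})^{-m}\) is nonzero only for \(m=(d-1)n+1\). In that case it equals \(\frac{1}{m}\binom{m+n-1}{n} = \frac{1}{(d-1)n+1}\binom{dn}{n}\), in agreement with the formula above.
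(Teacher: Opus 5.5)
Your proof is correct and follows the same route as the paper: substitute $y = t\,u$, reduce to $u = 1 + t^{d-1}u^{d}$, and read off the coefficients from Corollary~\ref{cor:lagrange_application1} with $s = t^{d-1}$. Your Step~1 uniqueness argument and the converse check in Step~2 (that $t\,\widetilde y(t^{d-1})$ solves the equation) make explicit the identification $u(t)=\widetilde y(t^{d-1})$. The paper asserts this identification only on the grounds that $u$ satisfies the same functional equation.
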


\begin{proof}
To obtain the expansion for \(y(t)\), we substitute \(y(t)=t\,u(t)\) into \(y^d - y + t = 0\). Since \(y(0)=0\) and differentiating the equation \(y^d - y + t = 0\) yields \(y'(0)=1\), the series \(u(t)=y(t)/t\) is well-defined and satisfies \(u(0)=1\). Substituting gives  
\[
(tu)^d - tu + t = 0 \quad\Longrightarrow\quad t\left(t^{d-1}u^d - u + 1\right)=0.
\]  
Notice that for \(t \neq 0\) we obtain \(t^{d-1}u^d - u + 1 = 0\). Furthermore, since \(u(0)=1\), substituting \(t=0\) gives \(t^{d-1}u^d(0) - u(0) + 1 = 0 - 1 + 1 = 0\), so the equation holds at \(t=0\) as well, hence
\[
t^{d-1}u^d - u + 1 = 0 \quad\Longrightarrow\quad u = 1 + t^{d-1}u^d.
\]  
Thus \(u(t)\) satisfies the same functional equation as \(\widetilde{y}(s)\) in \eqref{eq:widetilde_y}, with \(s = t^{d-1}\). Consequently, by the known series expansion \eqref{eq:widetilde_y_series},
\[
u(t) = \widetilde{y}\left(t^{d-1}\right) = \sum_{n=0}^{\infty} \frac{1}{(d-1)n+1}\binom{dn}{n} \left(t^{d-1}\right)^n.
\]  
Multiplying by \(t\) yields  
\[
y(t)=t\,u(t)=\sum_{n=0}^{\infty} \frac{1}{(d-1)n+1}\binom{dn}{n}\, t^{(d-1)n+1},
\]  
which is precisely \eqref{eq:y_series}.
\end{proof}

\begin{rem}
For the tetrahedron case, one can also find the explicit formula by first making the substitution \( t = \sqrt{\frac{4}{27}} \sin u\) and then deriving recurrence relations for the derivatives at zero.
\end{rem}

\begin{Th} \label{thm:R_surfaces_formula}
 Let \(R_n\) denote the number of non-isomorphic labelings of the external faces of a tetrahedron tree that can be obtained at step \(n\). Then, 
\(R_{n} = \dfrac{(3n)!}{3 \cdot n!}.\)
\end{Th}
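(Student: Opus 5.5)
We read off the coefficients of \(\mathcal{R}_3(t)=\frac{y^4}{12}-\frac{y^6}{6}\) from Theorem~\ref{thm:R_surfaces_solution}. Here \(y(t)\) is the root of \(y^3-y+t=0\) with \(y(0)=0\), which is the case \(d=3\) of Corollary~\ref{cor:lagrange_application2}. First we identify Cardano's branch with this formal power series. Both vanish at \(t=0\) and solve the same cubic. The formal solution is unique by Corollary~\ref{cor:lagrange_application2}, since the implicit function theorem applies at \(y=0\), where the derivative of \(y^3-y+t\) in \(y\) is \(-1\neq 0\).

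To compute the coefficients, we write \(y=t\,u\) with \(u=1+t^2u^3\), exactly as in the proof of Corollary~\ref{cor:lagrange_application2}. In the variable \(s=t^2\) this is \(u=1+s\,u^3\). We put \(u=1+w\), so that \(w=s(1+w)^3\), and apply Theorem~\ref{thm:lagrange_bürmann} with \(\phi(w)=(1+w)^3\).

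We need \([t^{2n+2}]\) of \(\frac{t^4u^4}{12}-\frac{t^6u^6}{6}\), which equals
\[
\frac{1}{12}[s^{n-1}]u^4-\frac16[s^{n-2}]u^6 .
\]
Theorem~\ref{thm:lagrange_bürmann} with \(H(w)=(1+w)^m\) gives, for \(N\ge1\),
\[
[s^N]u^m=\frac{m}{N}[w^{N-1}](1+w)^{3N+m-1}=\frac{m}{N}\binom{3N+m-1}{N-1}=\frac{m}{3N+m}\binom{3N+m}{N}.
\]
This also holds for \(N=0\).

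Substituting, with \(N=n-1\), \(m=4\) in the first term and \(N=n-2\), \(m=6\) in the second, gives
\[
\widehat R_n=\frac{1}{12}\cdot\frac{4}{3n+1}\binom{3n+1}{n-1}-\frac16\cdot\frac{6}{3n}\binom{3n}{n-2}.
\]
The second term is absent when \(n=1\). The remaining work is to simplify this to \(\frac{(3n)!}{3\,n!\,(2n+2)!}\) and then multiply by \((2n+2)!\).

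Writing both binomials over factorials gives
\[
\frac{(3n)!}{3\,(n-1)!\,(2n+2)!}\;-\;\frac{(3n-1)!}{(n-2)!\,(2n+2)!}
=\frac{(3n-1)!}{(n-1)!\,(2n+2)!}\Bigl(n-(n-1)\Bigr).
\]
Hence
\[
\widehat R_n=\frac{(3n-1)!}{(n-1)!\,(2n+2)!}=\frac{(3n)!}{3\,n!\,(2n+2)!}.
\]
Multiplying by \((2n+2)!\) yields \(R_n=\frac{(3n)!}{3\cdot n!}\).

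As sanity checks, \(n=1\) gives \(R_1=2\). The case \(n=2\) gives \(20\), which should be confirmed against the recurrence in Statement~\ref{stat:R_surfaces_recurrence}.

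The main obstacle is the bookkeeping rather than any idea. The powers of \(t\) must line up with the powers of \(s\), and the degenerate cases \(N=0\) must be handled. The cancellation between the two terms also needs care. As a cross-check independent of the Cardano branch, one could instead verify directly that \(R_n=\frac{(3n)!}{3\,n!}\) satisfies the recurrence of Statement~\ref{stat:R_surfaces_recurrence}, though that convolution identity seems harder to prove directly.
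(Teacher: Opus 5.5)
Your argument is correct. It differs from the paper's proof only in which function you expand.

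The paper applies Corollary~\ref{cor:lagrange_application2} with $d=3$ directly to $y$, forms $\mathcal{R}_3'=(y-t)/3$, and integrates term by term. The coefficient $\frac{1}{3}\frac{(3m)!}{m!\,(2m)!\,(2m+1)(2m+2)}$ of $t^{2m+2}$ then appears in one step, with no cancellation.

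You instead start from the closed form $\mathcal{R}_3=\frac{y^4}{12}-\frac{y^6}{6}$ of Theorem~\ref{thm:R_surfaces_solution}. This forces you to use the Lagrange--B\"urmann formula for general powers $u^m$ and then combine two terms of opposite sign. Your simplification is right; for example, at $n=2$ the two terms give $\frac13-\frac16=\frac16=\widehat R_2$.

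The extra algebra buys the general coefficient formula $[t^{2N+m}]\,y^m=\frac{m}{3N+m}\binom{3N+m}{N}$. This is exactly what the inductive count needs, since there the generating function is a constant multiple of $y^{2n_0}$ (in Theorem~\ref{thm:T_surfaces_formula} one has $z=2\sqrt3\,y$). Your formula with $m=2n_0$ reproduces the paper's coefficient there directly.

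One slip in your sanity check: the formula gives $R_2=\frac{6!}{3\cdot 2!}=120$, not $20$. The recurrence of Statement~\ref{stat:R_surfaces_recurrence} also gives $R_2=\frac12\cdot 3\binom{6}{3}R_1^2=120$, so the check passes with the correct value.
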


\begin{proof}
We want to use the results from the statements \ref{stat:R_surfaces_recurrence}, \ref{stat:R_surfaces_differential_equation}. We apply Corollary~\ref{cor:lagrange_application2} for the case \(d = 3\) to equation \eqref{eq:y_cubic_equation}
\[
y(t) = \sum_{m=0}^{\infty} \frac{t^{2m+1}}{2m+1} \binom{3m}{m}.
\]
Thus, using \eqref{eq:cubic_y_substitution}, we got
\[
\mathcal{R}_3'= \frac{y - t}{3} = \frac{1}{3} \sum_{m=1}^{\infty} \frac{t^{2m+1}}{2m+1} \binom{3m}{2m}.
\]
Integrating term by term and using the initial condition \(\mathcal{R}_3(0) = \mathcal{R}'_3(0) = 0\) yields
\[
\mathcal{R}_3(t) = \frac{1}{3} \sum_{m=1}^{\infty} \frac{t^{2m+2}}{(2m+1)(2m+2)} \frac{(3m)!}{m! \cdot (2m)!} = \sum\limits_{n=1}^{\infty} \dfrac{R_{n} t^{2n+2}}{(2n+2)!}.
\]
Comparing coefficients of equal powers we obtain \(R_{n} = \dfrac{(3n)!}{3 \cdot n!}\).
\end{proof}

\begin{rem} \label{rem:surfaces_without_automorphism}
\[
R_{n} \sim \frac{1}{3} \left(\frac{3n}{e}\right)^{3n} \sqrt{2 \pi \cdot 3n} \left(\left(\frac{n}{e}\right)^{n} \sqrt{2 \pi n}\right)^{-1} 
= \frac{1}{\sqrt{3}} \, \frac{27^{\,n} \, n^{2n}}{e^{2n}}.
\]
Moreover, after removing labelings (numberings), we obtain the following asymptotic estimate
\begin{align*}
\frac{R_{n}}{(2n+2)!} 
\sim \frac{1}{\sqrt{3}} \, \frac{27^{\,n} \, n^{2n}}{e^{2n}} \cdot 
\left( (2n+2)(2n+1) \left(\frac{2n}{e}\right)^{2n} \sqrt{4\pi n} \right)^{-1}\\
= \frac{ 27^{\,n}}{4 \cdot 4^{n}  \, \sqrt{3} \, (n+1)(2n+1) \sqrt{\pi n}}.
\end{align*}
\end{rem}
This asymptotic behavior implies that  
\[
\frac{6n R_{n}}{(2n+2)! \, F(n,0)} \xrightarrow[n \to \infty]{} 0,
\]
where \(F(n,g)\) is taken from \cite{GJ08, BGR08}; that is, tetrahedron trees constitute a vanishingly small proportion among all triangulations.

\begin{rem}
Substituting the explicit formula from theorem \ref{thm:R_surfaces_formula} into the recurrence relation \eqref{eq:R_surfaces_recurrence}, we obtain the following identity
\begin{equation*}
\frac{(3n)!}{3 \, n!}
= \frac{3}{2(n-1)} \sum_{k=1}^{n-1} \frac{(3k)!}{3 \, k!} \cdot \frac{(3(n-k))!}{3 \, (n-k)!} \binom{2n+2}{2k+1}
= \frac{1}{6(n-1)} \sum_{k=1}^{n-1} \frac{(3n)!}{n!} \cdot \frac{\binom{n}{k}}{\binom{3n}{3k}} \binom{2n+2}{2k+1}.
\end{equation*}
Thus, we obtained an interesting relation for binomial coefficients
\begin{equation} \label{eq:R_surfaces_combinatorial}
2(n-1) = \sum_{k=1}^{n-1} \frac{\binom{n}{k} \, \binom{2n+2}{2k+1}}{\binom{3n}{3k}}.
\end{equation}
\end{rem}

\begin{quest}
     \(R_n\) is a discrete process. Can it be extended to a continuous one, for example as \(\frac{\Gamma(3n)}{\Gamma(n)}\) in the case of labeled triangulations, and as \(\frac{\Gamma(3n)}{\Gamma(n)\Gamma(2n+3)}\) in the unlabeled case?
\end{quest}

\begin{quest}
    The formula for \(R_n\) turned out to be quite simple. Is there a straightforward combinatorial explanation for it?
\end{quest}

\section{The Number of Inductive Triangulations} \label{sec:inductive_triangulations}

Now we want to count non-isomorphic labelings for any fixed initial triangulation. Starting from an initial triangulation of any manifold, we generate further simplicial complexes by gluing tetrahedral trees. These trees are exactly the "building blocks" whose count we computed in the previous section. Suppose we have an initial triangulation consisting of \(2n_0\) triangles and \( {}_{2n_0} T_{0} \in \mathbb{N}_{>0}\) non-isomorphic labelings. We begin with an initial triangulation and then add tetrahedron at each step. We introduce the notation \( {}_{h} T_{n} \), where the lower left subscript \(h\) indexes the initial fixed triangulation and the lower right subscript \(n\) corresponds to the step (i.e., the number of added tetrahedra).

\begin{stat} \label{stat:T_surfaces_recurrence}
The following recurrence relation holds
\begin{equation}
n \cdot {}_{2n_0} T_{n} = 3 \sum\limits_{k=1}^{n} {}_{2n_0} T_{n-k} R_{k} \binom{2(n_0+n)}{2k+1}
= 3 \sum\limits_{k=1}^{n} {}_{2n_0} T_{n-k}  \frac{(3k)!}{3 \cdot k!} \binom{2(n_0+n)}{2k+1}.
\end{equation}
\end{stat}

\begin{proof}
Let \( {}_{2n_0} \Phi_0 \) be an initial simplicial complex with a boundary triangulation \( {}_{2n_0} \Phi_0^{\partial} \), where \( 2n_0 \) denotes the number of boundary triangles. Now, consider gluing \( n \) tetrahedra to this complex. We denote:
\begin{itemize}
    \item \( {}_{2n_0} \Phi_n \) as the resulting full complex after the \( n \) attachments.
    \item \( {}_{2n_0} \Phi_n^{\partial} \) as the induced triangulation on the boundary of \( {}_{2n_0} \Phi_n \).
\end{itemize}
The idea of proving this recurrence is similar to proof of the statement \ref{stat:R_surfaces_recurrence}: we again make cuts at the \(n\) attachment faces of tetrahedra in the resulting triangulation.

Fix a split \(n=k+(n-k)\) with \(1\le k\le n\). Every complex \({}_{2n_0} \Phi_n\) can be decomposed into \({}_{2n_0} \Phi_{n-k}\) and subtree \(\Phi_{k}\) glued together along one external triangle. There are exactly three ways (the order of the triangle's rotation group) to perform the gluing --- depending on the choice of the triangle's side along which the merging is made.  
The subtree have \(2k+2\) external triangles, respectively, so we need to distribute \(2k+2-1\) numbers on the figure \(\Phi_{k}\) (and accordingly \(2(n-k)+2n_0-1\) numbers on the figure \({}_{2n_0} \Phi_{n-k}\)) (we subtract one because one label on the face along which the gluing occurs is no longer counted). This gives us
\(\binom{2(n_0+n)}{2k+1}\) ways to do that. Finally, every decomposition fixes a concrete seam, whereas the left-hand side of the recurrence is simply \({}_{2n_0} T_{n}\) --- the number of chains without a distinguished seam. To account for all possible seams we multiply by their total number: \(n\). Hence, the desired recurrence relation is
\begin{equation} \label{eq:T_surfaces_recurrence}
n \cdot {}_{2n_0} T_{n} = 3 \sum\limits_{k=1}^{n} {}_{2n_0} T_{n-k} R_{k} \binom{2(n_0+n)}{2k+1}.   
\end{equation}

A visual illustration of one inductive step is given in Figure~\ref{fig:T_induction_step}.

\end{proof}

\begin{rem}
Note that in this case there is no symmetry between \(k\leftrightarrow n-k\); so we don't need to divide by 2. Moreover, the construction requires \(n\) gluings, not \(n-1\).
\end{rem}

\begin{rem} \label{rem:R_T_connection_surfaces_from_construction}
Observe also that we can start with a single tetrahedron as the initial configuration and then build inductive triangulation.  In this setting, we can compare \({}_{2n_0} T_{n-1}\) with \(R_n\): \({}_{2n_0} T_{n-1}\) is \(n\) times larger than \(R_n\), because by symmetry any tetrahedron in \(R_n\) can be chosen as the starting one.  
\end{rem}
  
\begin{figure}[!htbp]
\centering
\includegraphics[scale=0.5]{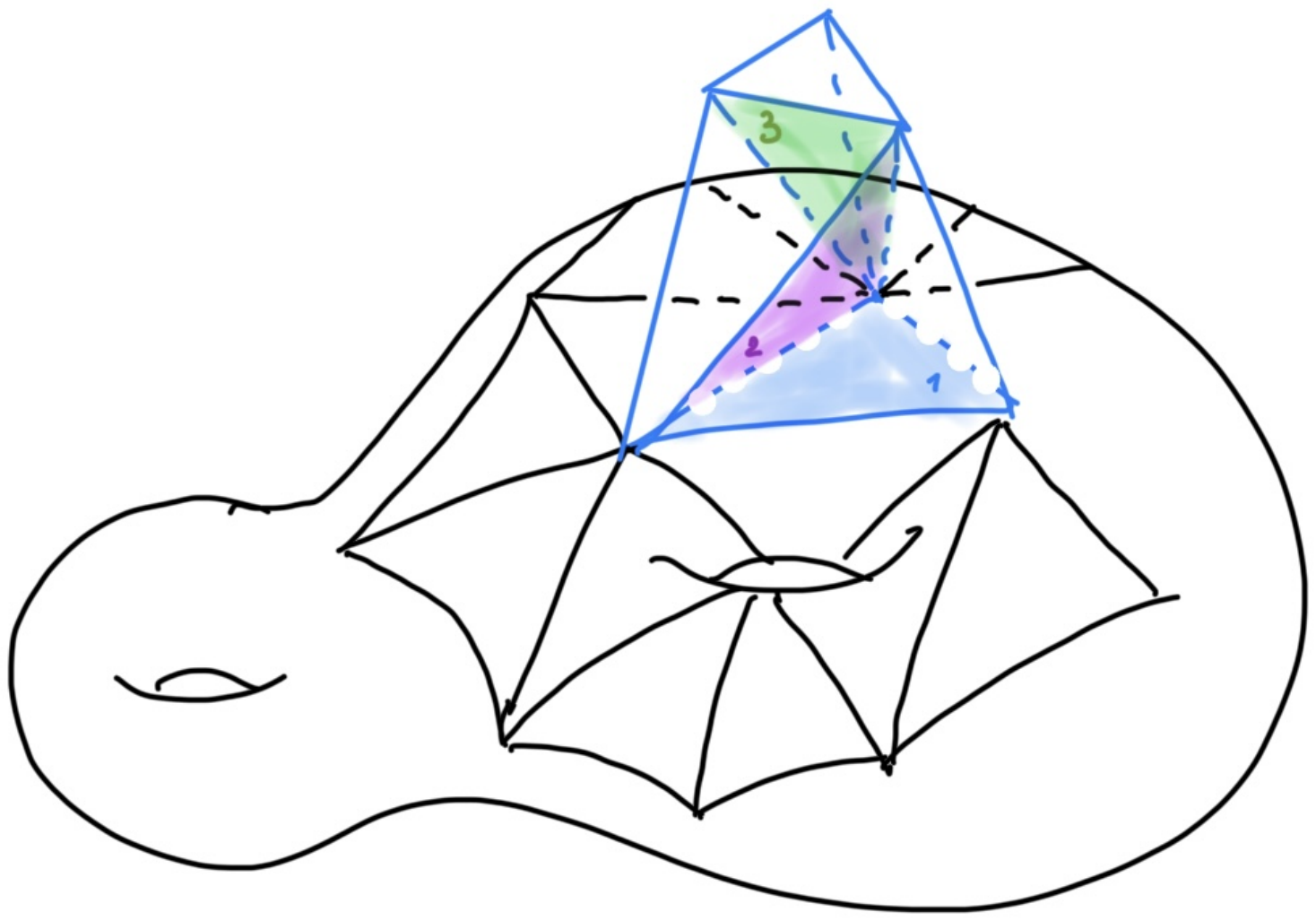}

\caption{Attaching a tetrahedron tree to an initial triangulation.}

\label{fig:T_induction_step}

\end{figure}

We will follow an approach analogous to the one used to find \(R_n\). Consider the exponential generating function (EGF) \({}_{2n_0} \mathcal{T}_3(t) = \sum\limits_{n = 0}^{\infty} {}_{2n_0} T_{n} \dfrac{t^{2n_0+2n}}{(2n_0 + 2n)!}\).

\begin{stat}
    \({}_{2n_0} \mathcal{T}_3(t)\) and \(\mathcal{R}_3\) satisfy the following differential equation
    \begin{equation} \label{eq:diff_T_R_general}
    {}_{2n_0} \mathcal{T}'_3 \mathcal{R}'_3 = \frac{1}{3} \left( \frac{t \cdot {}_{2n_0} \mathcal{T}_3'}{2} - n_0 \cdot {}_{2n_0} \mathcal{T}_3\right).
    \end{equation}
\end{stat}

\begin{proof}
Consider
\begin{align*}
{}_{2n_0} \mathcal{T}'_3(t) \mathcal{R}'_3(t) &= \left( \sum\limits_{n =0}^{\infty} \frac{{}_{2n_0} T_{n} t^{2n_0 + 2n - 1}}{(2n_0 + 2n-1)!} \right) \left( \sum\limits_{n = 1}^{\infty} \frac{R_{n} t^{2n + 1}}{(2n+1)!}\right) \\
&= \sum\limits_{n = 1}^{\infty} \sum\limits_{k = 1}^{n} \frac{{}_{2n_0} T_{n-k} t^{2n_0 + 2n - 2k - 1} R_{k} t^{2k + 1}}{(2n_0 + 2n - 2k - 1)! (2k + 1)!} \\
&= \sum\limits_{n = 1}^{\infty} \frac{n}{3} \frac{3}{n} \sum\limits_{k = 1}^{n} \frac{{}_{2n_0} T_{n-k} R_{k} t^{2n_0 + 2n} \binom{2n_0 + 2n}{2k+1} }{(2n_0 + 2n)!} \\
&=  \sum\limits_{n = 1}^{\infty} \frac{n}{3} \frac{{}_{2n_0} T_{n} t^{2n_0 + 2n}}{(2n_0 + 2n)!} \\
&= \sum\limits_{n = 0}^{\infty} \frac{n}{3} \frac{{}_{2n_0} T_{n} t^{2n_0 + 2n}}{(2n_0 + 2n)!} \\
&= \frac{1}{3} \sum\limits_{n = 0}^{\infty} \left(\frac{2(n+n_0)}{2} - n_0 \right)\frac{{}_{2n_0} T_{n} t^{2n_0 + 2n }}{(2n_0 + 2n )!} \\
&= \frac{1}{3} \left( \frac{t \cdot {}_{2n_0} \mathcal{T}'_3(t)}{2} - n_0 \cdot {}_{2n_0} \mathcal{T}_3(t)\right).
\end{align*}
\end{proof}
\begin{Th} \label{thm:T_surfaces_formula}
Let \( {}_{2n_0} T_{n} \) denote the number of non-isomorphic numberings that can be obtained from inductive triangulation consisting of \(2n_0\) triangles with \( {}_{2n_0} T_{0}\) non-isomorphic labelings. Also consider the EGF \({}_{2n_0} \mathcal{T}_3(t) = \sum\limits_{n = 0}^{\infty} {}_{2n_0} T_{n} \dfrac{t^{2n_0+2n}}{(2n_0 + 2n)!}\). Then
\begin{gather*}
{}_{2n_0} \mathcal{T}_3(t) = \frac{{}_{2n_0} T_{0}}{(12)^{n_0} (2n_0)!} \left(\sqrt[3]{4} \sqrt[3]{-3^{1.5} t + \sqrt{27t^2 - 4}} + \sqrt[3]{4} \sqrt[3]{-3^{1.5} t - \sqrt{27t^2 - 4}}\right)^{2n_0}, \\
{}_{2n_0} T_{n} = \frac{{}_{2n_0} T_{0} (3n+2n_0-1)!}{(2n_0-1)! \cdot n!} = {}_{2n_0} T_{0} \frac{(3n)!}{n!} 
\binom{3n +2n_0-1}{2n_0-1}.
\end{gather*}
\end{Th}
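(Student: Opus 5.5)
The plan is to solve the differential equation \eqref{eq:diff_T_R_general} as a first-order linear ODE for \({}_{2n_0}\mathcal{T}_3\), using the parametrization by \(y\) from Theorem~\ref{thm:R_surfaces_solution}. Then I extract coefficients with Theorem~\ref{thm:lagrange_bürmann}.

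Write \(T={}_{2n_0}\mathcal{T}_3\). By \eqref{eq:cubic_y_substitution} we have \(\mathcal{R}_3'=(y-t)/3\) with \(t=y-y^3\), so \(\mathcal{R}_3'=y^3/3\). Equation \eqref{eq:diff_T_R_general} becomes
\[
T'\left(\tfrac{t}{2}-y^3\right)=n_0T,
\qquad\text{i.e.}\qquad
\frac{T'}{T}=\frac{2n_0}{t-2y^3}=\frac{2n_0}{y-3y^3}.
\]
Changing variables with \(\mathrm{d}t=(1-3y^2)\,\mathrm{d}y\) gives
\[
\frac{\mathrm{d}\log T}{\mathrm{d}y}=\frac{2n_0(1-3y^2)}{y(1-3y^2)}=\frac{2n_0}{y}.
\]
Hence \(T=C\,y^{2n_0}\). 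The constant is fixed by the lowest coefficient. Since \(y=t+O(t^3)\), the coefficient of \(t^{2n_0}\) gives \(C={}_{2n_0}T_0/(2n_0)!\). Substituting Cardano's expression \(y=\frac{1}{\sqrt[3]{2}\sqrt{3}}\bigl(\sqrt[3]{\cdot}+\sqrt[3]{\cdot}\bigr)\) from the proof of Theorem~\ref{thm:R_surfaces_solution} then yields the closed form. The prefactor bookkeeping, \((\sqrt[3]{2}\sqrt3)^{-2n_0}\) versus \(\sqrt[3]{4}^{\,2n_0}/12^{n_0}\), is routine; note \(\sqrt[3]{4}/\sqrt{12}=1/(\sqrt[3]{2}\sqrt3)\).

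For the coefficients I use \(y=t\,u\) with \(u=1+t^2u^3\), exactly as in Corollary~\ref{cor:lagrange_application2}. Equivalently, set \(s=t^2\) and write \(u=1+w\), so that \(w=s(1+w)^3\). Apply Theorem~\ref{thm:lagrange_bürmann} with \(\phi(w)=(1+w)^3\) and \(H(w)=(1+w)^{2n_0}\):
\[
[s^n]u^{2n_0}=\frac{1}{n}[w^{n-1}]\,2n_0(1+w)^{3n+2n_0-1}
=\frac{2n_0}{n}\binom{3n+2n_0-1}{n-1}.
\]
This gives
\[
{}_{2n_0}T_n=(2n_0+2n)!\,\frac{{}_{2n_0}T_0}{(2n_0)!}\cdot\frac{2n_0}{n}\binom{3n+2n_0-1}{n-1}.
\]
Expanding the binomial, this simplifies to \({}_{2n_0}T_0\,(3n+2n_0-1)!/((2n_0-1)!\,n!)\). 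Rewriting it as \(\frac{(3n)!}{n!}\binom{3n+2n_0-1}{2n_0-1}\) is immediate. The case \(n=0\) is checked directly.

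The main obstacle is justifying uniqueness of the solution of the singular ODE at \(t=0\). Rather than arguing analytically, I would note that the recurrence in Statement~\ref{stat:T_surfaces_recurrence} determines all \({}_{2n_0}T_n\) from \({}_{2n_0}T_0\). It therefore suffices to verify that the formal series \(C y^{2n_0}\) satisfies \eqref{eq:diff_T_R_general}, which the computation above does at the level of formal power series. The recurrence is equivalent to the ODE coefficientwise for \(n\ge1\), since the coefficient \(n/3\) is nonzero. So the two sequences coincide.
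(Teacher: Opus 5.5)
Your proposal is correct and follows essentially the paper's route. The paper separates variables in \eqref{eq:diff_T_R_general} using $z=u+v$, which is just $2\sqrt{3}\,y$ (so $z^3-12z=-24\sqrt{3}\,t$ is your $y^3-y+t=0$), obtains $T=Cz^{2n_0}$, and extracts coefficients by Lagrange--B\"urmann, as you do with $y$ and as the paper itself does in Theorem~\ref{thm:general_T}; your formal-series uniqueness check via the recurrence tightens the paper's analytic manipulation with $|z|$ rather than changing the argument.
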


\begin{proof}
To solve the differential equation \eqref{eq:diff_T_R_general}, we transform its expression
\[
{}_{2n_0} \mathcal{T}'_3 \mathcal{R}'_3  = \frac{t\cdot {}_{2n_0} \mathcal{T}'_3}{6} - \frac{n_0}{3} \cdot {}_{2n_0} \mathcal{T}_3 \iff \frac{n_0}{3} {}_{2n_0} \mathcal{T}_3 = {}_{2n_0} \mathcal{T}_3'\left(-\mathcal{R}'_3+\frac{t}{6}\right).
\]
Let us separate the variables
\[
\frac{\mathrm{d} ({}_{2n_0} \mathcal{T}_3)}{{}_{2n_0} \mathcal{T}_3} = \frac{n_0}{3} \frac{\mathrm{d}t}{-\mathcal{R}'_3+\frac{t}{6}} \iff {}_{2n_0} \mathcal{T}_3 = C_1 \exp \left(\frac{n_0}{3} \int \frac{\mathrm{d} t}{-\mathcal{R}'_3+\frac{t}{6}} \right).
\]
Recall that \(\mathcal{R}'_3 = -\frac{t}{3} + \frac{\sqrt[3]{-3^{1.5}t + \sqrt{27t^2 - 4}} + \sqrt[3]{-3^{1.5}t - \sqrt{27t^2 - 4}}}{3\sqrt[3]{2} \cdot \sqrt{3}}\) (see \eqref{eq:R_3_explicit_formula}). Then
\begin{gather*}
{}_{2n_0} \mathcal{T}_3 = C_1 \exp \left(\frac{n_0}{3} \int \frac{\mathrm{d}t}{-\frac{\sqrt[3]{-3^{1.5}t + \sqrt{27t^2 - 4}} + \sqrt[3]{-3^{1.5}t - \sqrt{27t^2 - 4}}}{3\sqrt[3]{2} \cdot \sqrt{3}}+ \frac{t}{2}} \right). \\
{}_{2n_0} \mathcal{T}_3 = C_1 \exp \left( \frac{n_0}{3} \int \frac{6 \sqrt{3}\, \mathrm{d}t}{-\sqrt[3]{4} \left( \sqrt[3]{-3^{1.5} t + \sqrt{27t^2 - 4} } + \sqrt[3]{-3^{1.5} t - \sqrt{27t^2 - 4} } \right) + 3 \sqrt{3} t } \right).
\end{gather*}
Denote
\[
I(t) = \int \frac{\mathrm{d}t}{-\sqrt[3]{4} \left( \sqrt[3]{-3^{1.5} t + \sqrt{27t^2 - 4} } + \sqrt[3]{-3^{1.5} t - \sqrt{27t^2 - 4} } \right) + 3 \sqrt{3} t}.
\]
Introduce \(u = \sqrt[3]{4} \sqrt[3]{-3^{1.5} t + \sqrt{27t^2 - 4}}, v = \sqrt[3]{4} \sqrt[3]{-3^{1.5} t - \sqrt{27t^2 - 4}}, z = u + v\).
Note that
\[
uv = \sqrt[3]{16} \sqrt[3]{-3^{1.5} t + \sqrt{27t^2 - 4}} \sqrt[3]{-3^{1.5} t - \sqrt{27t^2 - 4}} = \sqrt[3]{16} \sqrt[3]{27t^2 - 27t^2 + 4} = 4;
\]
\[
z^3 = u^3+v^3 + 3uv(u+v) = 4\left(2 \cdot (-3^{1.5} t)\right) + 3 \cdot 4 z ;
\]
\[
z^3-12z = -24 \sqrt{3}t \implies (3z^2-12)\mathrm{d}z = -24 \sqrt{3}\mathrm{d}t.
\]
With this in mind,
\begin{align*}
I &= \int \frac{\mathrm{d}t}{-z + 3\sqrt{3}\,t}
   = \int \frac{\dfrac{3z^{2}-12}{-24\sqrt{3}}\,\mathrm{d}z}{-z + \dfrac{z^{3}-12z}{-8}}
   = \int \frac{(z^{2}-4)\,\mathrm{d}z}{8\sqrt{3}\!\left(z + \dfrac{z^{3}-12z}{8}\right)} \\
  &= \frac{1}{\sqrt{3}}\int \frac{(z^{2}-4)\,\mathrm{d}z}{8z + z^{3}-12z}
   = \frac{1}{\sqrt{3}}\int \frac{\mathrm{d}z}{z}
   = \frac{1}{\sqrt{3}}\ln|z| + C_{2}.
\end{align*}
Therefore
\[
{}_{2n_{0}}\mathcal{T}_{3}=C_{1}\exp\left(2n_{0}\sqrt{3}\,I(t)\right)=C|z|^{2n_{0}\sqrt{3}\cdot\frac{1}{\sqrt{3}}}=C|z|^{2n_{0}}=Cz^{2n_{0}}.
\]
Returning to the variable \(t\),
\[
{}_{2n_{0}}\mathcal{T}_{3}=C\left(\sqrt[3]{4}\sqrt[3]{-3^{3/2}t+\sqrt{27t^{2}-4}}+\sqrt[3]{4}\sqrt[3]{-3^{3/2}t-\sqrt{27t^{2}-4}}\right)^{2n_{0}}.
\]
Let us use Lagrange-Bürmann formula (Theorem \ref{thm:lagrange_bürmann}).  
We have
\begin{gather*}
t=g(z)=\frac{z^{3}-12z}{-24\sqrt{3}}, \\
{}_{2n_{0}}\mathcal{T}_{3}(z(t))={}_{2n_{0}}\mathcal{T}_{3}(z(0))+\sum_{j=1}^{\infty}\frac{t^{j}}{j!}\left[\frac{\mathrm{d}^{j-1}}{\mathrm{d}x^{j-1}}\!\left({}_{2n_{0}}\mathcal{T}_{3}'(x)\!\left(\frac{x}{g(x)}\right)^{j}\right)\right]_{x=0}.
\end{gather*}
Since \(z\) is odd, \(z(0)=z^{3}(0)=0\) and \(g'(0)=-\dfrac{1}{2\sqrt{3}}\neq 0\); hence \({}_{2n_{0}}\mathcal{T}_{3}(z(0))=0^{2n_{0}}=0\) and \({}_{2n_{0}}\mathcal{T}_{3}'(x)=2n_{0}x^{2n_{0}-1}\).  
We need the coefficient of \(x^{j-1}\) in
\begin{align*}
{}_{2n_{0}}\mathcal{T}_{3}'(x)\!\left(\frac{x}{g(x)}\right)^{j}
  &= 2n_{0}x^{2n_{0}-1}\!\left(\frac{-24\sqrt{3}}{x^{2}-12}\right)^{j} = 2n_{0}x^{2n_{0}-1}\!\left(\frac{2\sqrt{3}}{1-\frac{x^{2}}{12}}\right)^{j} \\
  &= 2n_{0}(2\sqrt{3})^{j}x^{2n_{0}-1}\sum_{s=0}^{\infty}\binom{-j}{s}(-1)^{s}\frac{x^{2s}}{12^{s}}.
\end{align*}
The power \(2s+2n_{0}-1\) must equal \(j-1\), so \(j=2s+2n_{0}\) (otherwise the coefficient is zero).  
Thus
\begin{align*}
\left[\frac{\mathrm{d}^{j-1}}{\mathrm{d}x^{j-1}}\!\left({}_{2n_{0}}\mathcal{T}_{3}'(x)\!\left(\frac{x}{g(x)}\right)^{j}\right)\right]_{x=0}
  &= (j-1)!\,2n_{0}(2\sqrt{3})^{2s+2n_{0}}\binom{-(2s+2n_{0})}{s}(-1)^{s}12^{-s} \\
  &= (j-1)!\,2n_{0}\,12^{s+n_{0}}(-1)^{s}\binom{3s+2n_{0}-1}{s}(-1)^{s}12^{-s}.
\end{align*}
Collecting everything,
\[
z^{2n_{0}}=\sum_{s=0}^{\infty}\frac{t^{2s+2n_{0}}}{2s+2n_{0}}\,2n_{0}\,12^{n_{0}}\binom{3s+2n_{0}-1}{s}.
\]
Therefore
\[
{}_{2n_{0}}\mathcal{T}_{3}(t)=C\sum_{s=0}^{\infty}\frac{t^{2s+2n_{0}}}{2s+2n_{0}}\,2n_{0}\,12^{n_{0}}\binom{3s+2n_{0}-1}{s}=\sum_{n=0}^{\infty}{}_{2n_{0}}T_{n}\frac{t^{2n_{0}+2n}}{(2n_{0}+2n)!}.
\]
Thus
\[
2n_{0}C\,12^{n_{0}}\frac{(3n+2n_{0}-1)!}{n!\,(2n+2n_{0}-1)!\,(2n+2n_{0})}={}_{2n_{0}}T_{n}\frac{1}{(2n_{0}+2n)!}.
\]
To find \(C\) note that \(z=2\sqrt{3}\,t+\dots\), so
\[
z^{2n_{0}}(t)=(2\sqrt{3})^{2n_{0}}t^{2n_{0}}+\dots,\qquad{}_{2n_{0}}G_{3}^{(2n_{0})}(0)={}_{2n_{0}}T_{0}=C(12)^{n_{0}}(2n_{0})!.
\]
Hence, \(C=\dfrac{{}_{2n_{0}}T_{0}}{(12)^{n_{0}}(2n_{0})!}.\)
Substituting back
\[
{}_{2n_{0}}T_{n}=\frac{(3n+2n_{0}-1)!}{n!}\,2n_{0}\,C\,12^{n_{0}}=\frac{{}_{2n_{0}}T_{0}(3n+2n_{0}-1)!}{(2n_{0}-1)!\,n!}={}_{2n_{0}}T_{0}\frac{(3n)!}{n!}\binom{3n+2n_{0}-1}{2n_{0}-1}.
\]
\end{proof}

\begin{rem} \label{rem:R_T_connection_surfaces}
We can establish a connection with the tetrahedron tree as follows. Take a tetrahedron composed of four triangles; it admits two non-isomorphic labelings, i.e.,
\[
n_0 = 2, \, {}_{2n_0} T_{0} = R_{1} = 2.
\]
Using the fact that the indexing is shifted by one, we
obtain
\begin{align*}
{}_{2n_0} T_{n-1} 
&= {}_{2 \cdot 2} T_{n-1} = 2 \,\frac{(3n-3)!}{(n-1)!} \binom{3(n-1) + 2 \cdot 2 - 1}{2 \cdot 2 - 1}\\
&= 2 \,\frac{(3n-3)!}{(n-1)!} \binom{3n}{3} = 2 \,\frac{(3n-3)!}{(n-1)!} \cdot \frac{3n(3n-1)(3n-2)}{6} \\
&= \frac{(3n)!}{3\,(n-1)!} = n R_{n}.
\end{align*}
This is consistent with remark \ref{rem:R_T_connection_surfaces_from_construction}.
\end{rem}

\begin{rem}
Substituting the explicit formula from theorem \ref{thm:T_surfaces_formula} into the recurrence relation \eqref{eq:T_surfaces_recurrence}, we obtain the following identity
\begin{align*}
n\, {}_{2n_0}T_{0} \frac{(3n)!}{n!} \binom{3n +2n_0-1}{2n_0-1}
&= 3 \sum\limits_{k=1}^{n} {}_{2n_0}T_{0} \frac{(3(n-k))!}{(n-k)!}
   \binom{3(n-k) +2n_0-1}{2n_0-1} \frac{(3k)!}{3 \cdot k!}
   \binom{2(n_0+n)}{2k+1} \\
&= \sum\limits_{k=1}^{n} {}_{2n_0}T_{0}
   \frac{\frac{(3n)!}{\binom{3n}{3k}}}{\frac{n!}{\binom{n}{k}}}
   \binom{3(n-k) +2n_0-1}{2n_0-1}
   \binom{2(n_0+n)}{2k+1}.
\end{align*}
Hence, for an arbitrary integer \(n_0 \geq 1\) we obtained a nice equality for binomial coefficients
\begin{equation} \label{eq:T_surfaces_combinatorial}
n \cdot \binom{3n +2n_0-1}{2n_0-1} = \sum\limits_{k=1}^{n} \frac{\binom{n}{k}}{\binom{3n}{3k}} \binom{3(n-k) +2n_0-1}{2n_0-1} \binom{2(n_0+n)}{2k+1}.
\end{equation}

\end{rem}

\section{Generalization to Other Regular Polytopes} \label{sec:general_inductive}

We extend the recurrence relations, differential equations, and formulas from the previous sections to the case where the tetrahedron is replaced with an arbitrary regular polytope of any dimension, obtaining relations analogous to those in statements \ref{stat:R_surfaces_recurrence} and \ref{stat:T_surfaces_recurrence}. This section adopts the notation \(R_n, {}_{h_0} T_n\) to avoid excessive indices, although in the general case \(R_n, {}_{h_0} T_n\) in this section differ from \(R_n, \, {}_{2n_0} T_n\) of the previous section. This is also necessary because, in odd dimensions, the number of initial polytopes \(h_0\) is not always even, which makes the surface notation \(2n_0\) inapplicable.

\subsection{Polytope Tree}

\begin{stat} \label{stat:R_general_recurrence}
Let \(h > 2\) be the number of codimension \(1\) hyperfaces of polytope, \(r\) the size of the rotation/automorphism group of the polytope's hyperfaces, and \(\tau\) the size of the rotation group of the polytope itself. Let \(R_n\) be the number of non-isomorphic ways to arrange numbers on hyperfaces in a polytope tree. Then for \(n \geq 2\)
\begin{equation} \label{eq:R_general_recurrence}
(n-1)R_n = \frac{r}{2} \sum\limits_{k=1}^{n-1} \binom{hn-2n+2}{hk-2k+1} R_k R_{n-k},    
\end{equation}
\end{stat}

\begin{proof}
The proof proceeds analogously to that of \ref{stat:R_surfaces_recurrence}; several details are omitted for conciseness. Fix a split \(n=k+(n-k)\) with \(1\le k\le n-1\). Every polytope tree  can be decomposed into two subtrees glued together along one external hyperface of codimension-\(1\), yielding the product \(R_{k} R_{n-k}\). However, the count of external hyperfaces is now \(hn - 2n + 2\), resulting in \(\binom{hn-2n+2}{hk-2k+1}\) possible ways to choose numbers for each subtree. During the gluing, we were able to rotate the codimension-\(1\) hyperface along which the gluing occurred, which gives us a multiplication by the order \(r\) of the rotation group. The number of seams, as well as the symmetry of \(k\) and \(n-k\), is preserved; thus, we obtain the recurrence relation
\[
(n-1)R_n = \frac{r}{2} \sum\limits_{k=1}^{n-1} \binom{hn-2n+2}{hk-2k+1} R_k R_{n-k}.   
\]
\end{proof}

\begin{Th} \label{thm:general_R}
Let \(h>2\), \(\tau\), and \(r\) be natural numbers and let \(R_1 = \dfrac{h!}{\tau}\). Suppose we have a recurrence relation
\begin{equation*}
(n-1)R_n = \frac{r}{2} \sum\limits_{k=1}^{n-1} \binom{hn-2n+2}{hk-2k+1} R_k R_{n-k}.    
\end{equation*}
Define the exponential generating function \(\mathcal{R}(t) = \sum\limits_{n=1}^{\infty} \dfrac{R_n t^{(h-2)n+2}}{((h-2)n+2))!}\).
Then
\begin{equation} \label{eq:mathcalR_general_differential_equation}
r(h-2) \left( \frac{\mathrm{d}\mathcal{R}}{\mathrm{d}t} \right)^2 - 2 t \frac{\mathrm{d}\mathcal{R}}{\mathrm{d}t} + 2 h \mathcal{R}(t) = 0,
\end{equation}
\begin{equation} \label{eq:R_general_explicit}
R_n = \frac{1}{r} \left( \frac{hr}{\tau} \right)^{n} \frac{(n(h-1))!}{n!}.
\end{equation}
\end{Th}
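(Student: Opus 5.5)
The plan is to copy the tetrahedron argument (Statement~\ref{stat:R_surfaces_differential_equation} and Theorems~\ref{thm:R_surfaces_solution}, \ref{thm:R_surfaces_formula}), keeping $h,r,\tau$ general. Write $m=h-2$, so that $\mathcal{R}(t)=\sum_{n\ge1}R_n t^{mn+2}/(mn+2)!$.

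\emph{Step 1: the differential equation.} Square $\mathcal{R}'(t)=\sum_{n\ge1}R_n t^{mn+1}/(mn+1)!$. The coefficient of $t^{mn+2}/(mn+2)!$ in $(\mathcal{R}')^2$ is
\[
\sum_{k=1}^{n-1}\binom{mn+2}{mk+1}R_kR_{n-k}=\frac{2(n-1)}{r}R_n
\]
by the recurrence. The key observation is $(mn+2)-h=m(n-1)$, so $n-1=\bigl((mn+2)-h\bigr)/m$. Since the operator $t\frac{\mathrm{d}}{\mathrm{d}t}$ multiplies the coefficient of $t^{mn+2}$ by $mn+2$, this gives $rm(\mathcal{R}')^2=2\bigl(t\mathcal{R}'-h\mathcal{R}\bigr)$, which is \eqref{eq:mathcalR_general_differential_equation}. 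Conversely, comparing coefficients shows that this equation, together with the lowest coefficient $R_1$, determines every $R_n$ uniquely. Hence it suffices to exhibit \emph{one} formal power series solution with the right leading term. This disposes of the branch-selection issues that appeared in the tetrahedron case.

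\emph{Step 2: reduction to an algebraic equation.} Differentiate the ODE and put $p=\mathcal{R}'$. This gives $\mathcal{R}''(rmp-t)=(1-h)p$. Viewing $t$ as a function of $p$ leads to the linear equation
\[
\frac{\mathrm{d}t}{\mathrm{d}p}=\frac{t}{(h-1)p}-\frac{rm}{h-1}.
\]
Its general solution is $t=Cp^{1/(h-1)}-rp$; the particular coefficient $A$ in $t=Ap$ satisfies $A(h-2)=-rm$, so $A=-r$. Setting $y=t+rp$ gives $y^{h-1}=C^{h-1}(y-t)/r$.

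To fix $C$, use $p=R_1t^{h-1}/(h-1)!+\dots$ and $y=t+O(t^{h-1})$. This yields $C^{h-1}=(h-1)!/R_1=\tau/h$, using $R_1=h!/\tau$. Thus $y=t+\lambda y^{h-1}$ with $\lambda=rh/\tau$. To keep the argument rigorous, I would simply \emph{define} $y$ as the unique formal series solving this equation with $y(0)=0$, and set $p=(y-t)/r$. I would then verify directly that $\mathcal{R}=\int p\,\mathrm{d}t$ satisfies \eqref{eq:mathcalR_general_differential_equation}, for instance by changing variable to $y$ as in Theorem~\ref{thm:R_surfaces_solution}, where $\mathrm{d}t=(1-(h-1)\lambda y^{h-2})\,\mathrm{d}y$.

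\emph{Step 3: coefficients.} Substitute $y=tu$. Then $u=1+\lambda t^{h-2}u^{h-1}$, which is Corollary~\ref{cor:lagrange_application1} with $d=h-1$ and $s=\lambda t^{m}$. Hence
\[
p=\frac{1}{r}\sum_{n\ge1}\frac{\lambda^n}{mn+1}\binom{(h-1)n}{n}t^{mn+1}.
\]
Matching this with $R_n/(mn+1)!$ gives
\[
R_n=\frac{1}{r}\lambda^n\binom{(h-1)n}{n}(mn)!=\frac{1}{r}\Bigl(\frac{hr}{\tau}\Bigr)^n\frac{((h-1)n)!}{n!},
\]
which is \eqref{eq:R_general_explicit}. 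As a check, $h=4,r=3,\tau=12$ recovers Theorem~\ref{thm:R_surfaces_formula}.

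The main obstacle I expect is Step 2: passing rigorously from the ODE to the algebraic equation, where one divides by $p$ and takes roots. I would avoid this by the verification-plus-uniqueness route of Step 1, checking the ODE directly in the variable $y$.
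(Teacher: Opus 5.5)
Your proposal is correct and takes essentially the paper's route. The same coefficient comparison gives the ODE, and the same differentiation and variation of constants in $t(p)$ yields $y=t+\frac{hr}{\tau}y^{h-1}$, with the constant fixed by $R_1$. Lagrange inversion then finishes: your substitution $y=tu$ into Corollary~\ref{cor:lagrange_application1} is the same computation as the paper's rescaling $y=A^{1/(h-2)}z$ followed by Corollary~\ref{cor:lagrange_application2}.

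Your uniqueness-plus-verification framing is a real improvement over the paper's informal steps (dividing by $p$, taking $p^{1/(h-1)}$, dismissing particular solutions without checking). Make explicit that uniqueness holds only among series of the form $\sum_{n\ge1}c_n t^{(h-2)n+2}$, since, for instance, $-t^2/(2r)$ is another formal solution of the differential equation. Your candidate does lie in that class, because $u$ is a series in $t^{h-2}$.
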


\begin{proof}
The technique will be analogous to the argument in section \ref{sec:R_surfaces}. Let us consider \((\mathcal{R}'(t))^2\)
\[
(\mathcal{R}'(t))^2 = \left( \sum_{k=1}^{\infty} R_k \frac{t^{(h-2)k + 1}}{((h-2)k + 1)!} \right) \left( \sum_{m=1}^{\infty} R_m \frac{t^{(h-2)m + 1}}{((h-2)m + 1)!} \right).
\]
\[
(\mathcal{R}'(t))^2 = \sum_{n=2}^{\infty} \sum_{k=1}^{n-1} R_k R_{n-k} \frac{t^{(h-2)n + 2}}{((h-2)k + 1)! ((h-2)(n-k) + 1)!}.
\]
Denote \(m_n = (h-2)n + 2\) and \(s_k = (h-2)k + 1\), then
\[
(\mathcal{R}'(t))^2 = \sum_{n=2}^{\infty} t^{m_n} \sum_{k=1}^{n-1} R_k R_{n-k} \frac{1}{s_k! s_{n-k}!} = \sum_{n=2}^{\infty} \frac{t^{m_n}}{m_n!} \sum_{k=1}^{n-1} R_k R_{n-k} \binom{m_n}{s_k}.
\]
From \eqref{eq:R_general_recurrence}
\[
(\mathcal{R}'(t))^2 = \sum_{n=2}^{\infty} \frac{t^{m_n}}{m_n!} \cdot \frac{2}{r} (n-1) R_n = \frac{2}{r} \sum_{n=2}^{\infty} (n-1) R_n \frac{t^{m_n}}{m_n!} = \frac{2}{r} \sum_{n=1}^{\infty} (n-1) R_n \frac{t^{m_n}}{m_n!}.
\]
Since \(m_n = (h-2)n + 2\), then:
\[
n = \frac{m_n - 2}{h-2}, \quad n-1 = \frac{m_n - 2}{h-2} - 1 = \frac{m_n - h}{h-2}.
\]
Thus
\[
(\mathcal{R}'(t))^2 = \frac{2}{r} \sum_{n=1}^{\infty} \frac{m_n - h}{h-2} R_n \frac{t^{m_n}}{m_n!} = \frac{2}{r(h-2)} \sum_{n=1}^{\infty} (m_n - h) R_n \frac{t^{m_n}}{m_n!}.
\]
Since
\[
\sum_{n=1}^{\infty} (m_n - h) R_n \frac{t^{m_n}}{m_n!} = t\mathcal{R}' - h\mathcal{R},
\]
then
\[
(\mathcal{R}'(t))^2 = \frac{2}{r(h-2)} \left( t \mathcal{R}'(t) - h \mathcal{R}(t) \right).
\]
Multiplying both sides by \(r(h-2)\) and moving all terms to one side, we obtain the differential equation
\[
r(h-2) \left( \frac{\mathrm{d}\mathcal{R}}{\mathrm{d}t} \right)^2 - 2 t \frac{\mathrm{d}\mathcal{R}}{\mathrm{d}t} + 2 h \mathcal{R}(t) = 0.
\]
Differentiating the equation and introducing the notation \( p = \dfrac{\mathrm{d}\mathcal{R}}{\mathrm{d}t} \)
\[
2r(h-2) p \frac{\mathrm{d}p}{\mathrm{d}t} - 2 \left( t \frac{\mathrm{d}p}{\mathrm{d}t} + p \right) + 2h p = 0.
\]
\[
r(h-2) p \frac{\mathrm{d}p}{\mathrm{d}t} - t \frac{\mathrm{d}p}{\mathrm{d}t} + (h - 1) p = 0.
\]
Now treating \( t \) as a function of \( p \), we obtain
\[
[r(h-2) p - t] \frac{1}{\frac{\mathrm{d}t}{\mathrm{d}p}} + (h - 1) p = 0.
\]
\[
(h - 1) p \frac{\mathrm{d}t}{\mathrm{d}p} - t = -r(h-2) p.
\]
Dividing by \( p \) (partial solutions are not suitable)
\begin{equation} \label{eq:variation_method}
(h - 1) \frac{\mathrm{d}t}{\mathrm{d}p} - \frac{t}{p} = -r(h-2).
\end{equation}
Let us solve this equation using the method of variation of constants
\[
(h - 1) \frac{\mathrm{d}t}{\mathrm{d}p} - \frac{t}{p} = 0.
\]
\[
t = C p^{\frac{1}{h-1}}.
\]
Let \( t = C(p) p^{\frac{1}{h-1}} \). Then
\[
\frac{\mathrm{d}t}{\mathrm{d}p} = C'(p) p^{\frac{1}{h-1}} + C(p) \cdot \frac{1}{h-1} p^{\frac{1}{h-1} - 1}.
\]
Substituting in \eqref{eq:variation_method}
\[
(h - 1) \left[ C'(p) p^{\frac{1}{h-1}} + \frac{1}{h-1} C(p) p^{\frac{1}{h-1} - 1} \right] - \frac{1}{p} C(p) p^{\frac{1}{h-1}} = -r(h-2).
\]
Simplify it
\[
(h - 1) C'(p) p^{\frac{1}{h-1}} + C(p) p^{\frac{1}{h-1} - 1} - C(p) p^{\frac{1}{h-1} - 1} = -r(h-2),
\]
\[
(h - 1) C'(p) p^{\frac{1}{h-1}} = -r(h-2).
\]
Hence
\[
C'(p) = -\frac{r(h-2)}{h-1} p^{-\frac{1}{h-1}}.
\]
Integrating
\[
C(p) = -\frac{r(h-2)}{h-1} \int p^{-\frac{1}{h-1}} \mathrm{d}p = -\frac{r(h-2)}{h-1} \cdot \frac{p^{1 - \frac{1}{h-1}}}{1 - \frac{1}{h-1}} + C_1 = -r p^{\frac{h-2}{h-1}} + C_1,
\]
where \( C_1 \) is the constant of integration. Hence
\[
t(p) = \left( -r p^{\frac{h-2}{h-1}} + C_1 \right) p^{\frac{1}{h-1}} = C_1 p^{\frac{1}{h-1}} - r p.
\]
Moving the terms to opposite sides and raising to the power of \(h-1\), we get
\[
(t + r p)^{h-1} = C_2 p.
\]
Let us introduce substitution \(y = t + r p, p = \mathcal{R}' = \frac{y - t}{r}, A = \frac{C_2}{r}\)
\begin{equation} \label{eq:y_general_equation}
y^{h-1} = C_2 \frac{y - t}{r} \implies y^{h-1} - A y + A t = 0.    
\end{equation}
Now, using a rescaling, we reduce the solution of equation \eqref{eq:y_general_equation} to Corollary \ref{cor:lagrange_application2}
\[
y =: A^{\frac{1}{h-2}}\, z.
\]  
Substituting into the equation gives  
\[
A^{\frac{h-1}{h-2}}(z^{h-1} - z) + A t = 0, \quad A^{\frac{1}{h-2}}(z^{h-1} - z) + t = 0.
\]  
Thus
\[
z^{h-1} - z + s = 0,\qquad \text{where } s := \frac{t}{A^{\frac{1}{h-2}}}.
\]  
By Corollary \ref{cor:lagrange_application2}, the solution \(z(s)\) has the series expansion  
\[
z(s) = \sum_{k=0}^{\infty} \frac{1}{(h-2)k+1}\binom{(h-1)k}{k}\, s^{(h-2)k+1}.
\]  
Returning to the original variables,  
\[
\begin{aligned}
y(t) &= A^{\frac{1}{h-2}} z(s) \\[2mm]
&= A^{\frac{1}{h-2}} \sum_{k=0}^{\infty}
\frac{1}{(h-2)k+1}\binom{(h-1)k}{k}
\left(\frac{t}{A^{\frac{1}{h-2}}}\right)^{(h-2)k+1} \\[2mm]
&= \sum_{k=0}^{\infty}
\frac{1}{(h-2)k+1}\binom{(h-1)k}{k}\,
A^{-k}\, t^{(h-2)k+1}.
\end{aligned}
\]
Hence
\begin{align*}
\mathcal{R}' &= \frac{1}{r} (y - t) = \frac{1}{r} \left(-t + \sum \limits_{k=0}^{\infty} A^{-k} \frac{(k(h-1))!}{k!} \frac{t^{k(h-2)+1}}{(k(h-2)+1)!}\right) \\
&= \frac{1}{r} \left(\sum \limits_{k=1}^{\infty} A^{-k} \frac{(k(h-1))!}{k!} \frac{t^{k(h-2)+1}}{(k(h-2)+1)!}\right).   
\end{align*}
\[
\mathcal{R} = \frac{1}{r} \sum \limits_{k=1}^{\infty} A^{-k} \frac{(k(h-1))!}{k!} \frac{t^{k(h-2)+2}}{(k(h-2)+2)!} + C_3.
\]
From \(\mathcal{R}(0)=0\), we deduce that \(C_3=0\). It remains to analyze the term \(A = \frac{C_2}{r}\), which encodes the initial condition. Note that
\[
\left.\frac{\mathrm{d}^{\,h-1}}{\mathrm{d}t^{\,h-1}} \left(y^{h-1}\right) \right|_{t=0} = \left.\frac{\mathrm{d}^{\,h-1}}{\mathrm{d}t^{\,h-1}} \left((t+rp)^{h-1}\right) \right|_{t=0} = \left.\frac{\mathrm{d}^{\,h-1}}{\mathrm{d}t^{\,h-1}} \left(t^{h-1}+\cdots\right) \right|_{t=0} = (h-1)!.
\]
On the other hand \(y^{h-1} = \frac{C_2}{r}\left(y-t\right)\) and
\[
\left.\frac{\mathrm{d}^{\,h-1}}{\mathrm{d}t^{\,h-1}} \left( \frac{C_2}{r} (y - t) \right) \right|_{t=0} = \left.\frac{\mathrm{d}^{\,h-1}}{\mathrm{d}t^{\,h-1}} \left( \frac{C_2}{r} r \mathcal{R}' \right) \right|_{t=0} = C_2 R_1.
\]
Therefore
\[
C_2 = \frac{(h-1)!}{R_1} = \frac{(h-1)!}{\frac{h!}{\tau}} = \frac{\tau}{h}.
\]
Hence, \(A = \frac{C_2}{r} = \frac{\tau}{hr}\), therefore,
\[
R_n = \frac{1}{r} A^{-n} \frac{(n(h-1))!}{n!} = \frac{1}{r} \left( \frac{hr}{\tau} \right)^{n} \frac{(n(h-1))!}{n!}.
\]
Note that by substituting \(h = 4\), \(r = \frac{3!}{2} = 3\), \(\tau = \frac{4!}{2} = 12\), we obtain the result of theorem \ref{thm:R_surfaces_formula}.
\end{proof}
Since we have \(nh - 2(n-1)\) hyperfaces of codimension one in the resulting figure, we can introduce a new variable to eliminate enumerations
\begin{equation} \label{eq:R_general_explicit_no_numeration}
\widehat{R}_n := \frac{R_n}{(n(h-2)+2)!} = \frac{1}{r} \left( \frac{hr}{\tau} \right)^n \frac{(n(h-1))!}{n! \, (n(h-2)+2)!}.
\end{equation}

\begin{corollary} \label{cor:widehat_R_asymptotics}
The asymptotics of \(\widehat{R}_n\) is given by

\[
\widehat{R}_n \sim \frac{\sqrt{h-1}}{r\sqrt{2\pi}(h-2)^{5/2}} n^{-5/2} \left( \frac{hr}{\tau} \frac{(h-1)^{h-1}}{(h-2)^{h-2}} \right)^n.
\] 
\end{corollary}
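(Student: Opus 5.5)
The plan is to apply Stirling's formula $m!\sim\sqrt{2\pi m}\,(m/e)^m$ directly to the three factorials in the explicit expression \eqref{eq:R_general_explicit_no_numeration}. The factor $(n(h-2)+2)!$ is first rewritten as $(n(h-2)+2)(n(h-2)+1)\,(n(h-2))!$, and the two prefactors are replaced by their leading term $(h-2)^2n^2$. This reduces the problem to the asymptotics of
\[
\frac{(n(h-1))!}{n!\,(n(h-2))!},
\]
which is the multinomial-type quantity $\binom{n(h-1)}{n}$ in disguise.

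Applying Stirling to each factorial, the exponential parts $e^{-n(h-1)}$ and $e^{n}e^{n(h-2)}$ cancel exactly. The powers of $n$ in the main terms also cancel, because
\[
n^{n(h-1)}/(n^n n^{n(h-2)})=1,
\]
and what remains is the exponential base $(h-1)^{h-1}/(h-2)^{h-2}$ raised to the power $n$. For the square-root factors we get
\[
\frac{\sqrt{2\pi n(h-1)}}{\sqrt{2\pi n}\sqrt{2\pi n(h-2)}}=\frac{\sqrt{h-1}}{\sqrt{2\pi}\sqrt{h-2}}\,n^{-1/2}.
\]
Combining this with the factor $\frac{1}{r}(hr/\tau)^n$ and the prefactor $1/((h-2)^2n^2)$ gives the total power $n^{-5/2}$ and the constant $\sqrt{h-1}/(r\sqrt{2\pi}(h-2)^{5/2})$. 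This is exactly the claimed expression.

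No step is a real obstacle; the only care needed is bookkeeping. One must replace $(n(h-2)+2)(n(h-2)+1)$ by $(h-2)^2n^2$ with ratio tending to $1$, and track the $\sqrt{h-2}$ coming from Stirling so that the powers of $(h-2)$ combine to $5/2$. As a sanity check, I would set $h=4$, $r=3$, $\tau=12$ and compare with Remark~\ref{rem:surfaces_without_automorphism}. There the base is $\frac{12\cdot 3}{12}\cdot\frac{27}{4}=27/4$, and the constant $\sqrt3/(3\sqrt{2\pi}\,2^{5/2})$ matches the earlier estimate $27^n/(4\cdot4^n\sqrt3\,(n+1)(2n+1)\sqrt{\pi n})$ once $(n+1)(2n+1)\sim 2n^2$ is used.
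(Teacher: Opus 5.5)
Your proposal is correct and takes essentially the paper's route: Stirling's formula applied to the explicit expression \eqref{eq:R_general_explicit_no_numeration}. The only differences are that you first split off the factors $(n(h-2)+2)(n(h-2)+1)$, which spares you the paper's separate handling of $(1+2/s)^s\to e^2$ with $s=n(h-2)$, and that your sanity check has a small slip ($hr/\tau=4\cdot 3/12=1$, not $12\cdot 3/12$), although your final base $27/4$ is right.
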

\begin{proof}

Let us introduce the notation \( s = n(h-2) \). Then the expression takes the form
\[
\widehat{R}_n = \frac{R_n}{(s+2)!} = \frac{1}{r} \left( \frac{hr}{\tau} \right)^n \frac{(s+n)!}{n! \, (s+2)!}.
\]
We can determine its asymptotic behavior using Stirling's formula
\[
(s+n)! \sim \sqrt{2\pi(s+n)} \left( \frac{s+n}{e} \right)^{s+n}, n! \sim \sqrt{2\pi n} \left( \frac{n}{e} \right)^n, (s+2)! \sim \sqrt{2\pi(s+2)} \left( \frac{s+2}{e} \right)^{s+2}.
\]
Consequently, we can write
\[
\frac{(s+n)!}{n! \, (s+2)!} \sim \frac{\sqrt{2\pi(s+n)}}{2\pi \sqrt{n(s+2)}} \cdot \frac{(s+n)^{s+n}}{n^n (s+2)^{s+2}} \cdot e^2.
\]
Since
\begin{align*}
\frac{(s+n)^{s+n}}{n^n (s+2)^{s+2}} 
&= \frac{(s+n)^s}{s^s} \cdot \frac{s^s}{(s+2)^{s+2}} \cdot \frac{(s+n)^n}{n^n} \\
&= \left[ (s+2)^2 \left( 1 + \frac{2}{s} \right)^{s} \right]^{-1} \left(1 + \frac{n}{s} \right)^{s} \left(1 + \frac{s}{n} \right)^{n} \\
&\sim \frac{e^{-2}}{n^2 (h-2)^2} \left(1 + \frac{1}{h-2} \right)^{n(h-2)} \left(1 + h - 2\right)^{n} \\
&\sim \frac{e^{-2}}{n^2 (h-2)^2} \cdot \frac{(h-1)^{n(h-1)}}{(h-2)^{n(h-2)}},
\end{align*}
we get
\begin{align*}
\frac{R_n}{(s+2)!} &\sim  \frac{1}{r} \left( \frac{hr}{\tau} \right)^n e^2  \frac{\sqrt{2\pi(n(h-2)+n)}}{2\pi \sqrt{n(n(h-2)+2)}} \frac{e^{-2}}{n^2(h-2)^2} \frac{(h-1)^{n(h-1)}}{(h-2)^{n(h-2)}}  \\
&\sim \frac{\sqrt{h-1}}{r\sqrt{2\pi}(h-2)^{5/2}} n^{-5/2} \left( \frac{hr}{\tau} \frac{(h-1)^{h-1}}{(h-2)^{h-2}} \right)^n.
\end{align*}
\end{proof}

\begin{rem}
As an example of a polytope, we can take the \(d\)-dimensional simplex. In this case, the number of hyperfaces is \(h = d+1\). The order of its full rotation group is \(\tau = \frac{(d+1)!}{2}\) (which follows from the fact that it is the symmetric group \(S_{d+1}\) modulo orientation), and the order of the rotation group of a hyperface of codimension one is \(r = \frac{d!}{2}\). Substituting these parameter values yields a result consistent with the estimate obtained for the \(d\)-dimensional simplex in \cite{BZ11} and for the three-dimensional case in \cite{BL22}. 
\end{rem}

\begin{rem}
Our general formulas recover
the classical Catalan enumeration of
triangulations of a convex polygon. Indeed, in this case \(h=3\), \(r=1\), and \(\tau=3\). Substituting these values into
\eqref{eq:R_general_explicit} gives
\[
R_n=\frac{(2n)!}{n!},
\qquad
\widehat{R}_n
=\frac{(2n)!}{n!(n+2)!}.
\]
Consequently,
\[
(n+2)\widehat{R}_n
=\frac{1}{n+1}\binom{2n}{n}=C_n,
\]
where \(C_n\) is the \(n\)-th Catalan number.
Combinatorially, a tree of \(n\) triangles
has \(n+2\) boundary edges, and choosing a
distinguished boundary edge gives the usual
rooted triangulations of a polygon with
\(n+2\) sides. We omit the details and leave them to the interested reader.
\end{rem}

\begin{quest}
The works \cite{BeinekePippert1971, HRS82, Beineke1974} contain closely related ideas and results, but use different terminology. It would be interesting to find the connections between our results and those earlier works.
\end{quest}

\subsection{Generalization of Inductive Triangulations}
\begin{stat} \label{stat:T_general_recurrence}
Let \(h > 2\) be the number of codimension \(1\) hyperfaces, \(h_0\) be the number of external facets in the initial complex, \(r\) the size of the rotation/automorphism group of the polytope's hyperfaces, and \(\tau\) the size of the rotation group of the polytope itself. Let \({}_{h_0} T_0\) be the number of non-isomorphic labelings of the initial triangulation. Then for \(n \geq 1\)
\begin{equation} \label{eq:T_general_recurrence}
n \cdot {}_{h_0}T_n = r \sum\limits_{k=0}^{n-1} \binom{h n - 2n + h_0}{h(n-k) - 2(n-k) + 1} {}_{h_0} T_k R_{n-k}.
\end{equation}
\end{stat}

\begin{proof}
The proof proceeds analogously to that of \ref{stat:T_surfaces_recurrence}; several details are omitted for conciseness. Fix a split \(n=k+(n-k)\) with \(0\le k\le n\), we make cuts at the \(n\) attachment faces of polytopes in the resulting boundary of complex, yielding the product \({}_{h_0} T_k R_{n-k}\) of the numbering counts of the subtree and the cut remainder. However, the count of external hyperfaces is now \(h n - 2n + h_0\), giving \(\binom{h n - 2n + h_0}{h(n-k) - 2(n-k) + 1}\) possible ways to choose numbers for each subcomplex. During the gluing, we were able to rotate the codimension \(1\) hyperface along which the gluing occurred, which gives us a multiplication by the order \(r\) of the rotation group. The number of seams is still \(n\); thus
\[
n \cdot {}_{h_0}T_n = r \sum\limits_{k=0}^{n-1} \binom{h n - 2n + h_0}{h(n-k) - 2(n-k) + 1} {}_{h_0} T_k R_{n-k}. 
\]
\end{proof}

\begin{Th} \label{thm:general_T}
Let \(R_1 = \dfrac{h!}{\tau}\), where \(h>2, h_0\), \(\tau\), and \(r\) are natural numbers. Suppose we also have recurrence relations
\begin{equation*}
(n-1)R_n = \frac{r}{2} \sum\limits_{k=1}^{n-1} \binom{hn-2n+2}{hk-2k+1} R_k R_{n-k}.   
\end{equation*}
\begin{equation}
n \cdot {}_{h_0}T_n = r \sum\limits_{k=0}^{n-1} \binom{h n - 2n + h_0}{h(n-k) - 2(n-k) + 1} {}_{h_0} T_k R_{n-k}.
\end{equation}
Define generating functions
\[
{}_{h_0} \mathcal{T}(t) = \sum_{n=0}^{\infty} \frac{{}_{h_0} T_n  t^{h n - 2n + h_0}}{(h n - 2n + h_0)!}, \quad \mathcal{R}(t) = \sum_{n=1}^{\infty} \frac{R_n  t^{h n - 2n + 2}}{(h n - 2n + 2)!}.
\]
Then
\begin{equation} \label{eq:mathcalT_general_differential_equation}
\left[ r(h-2) \mathcal{R}'(t) - t \right] \frac{\mathrm{d}({}_{h_0}\mathcal{T})}{\mathrm{d}t} = -h_0 \cdot {}_{h_0}\mathcal{T}(t),
\end{equation}
\begin{equation} \label{eq:T_general_explicit}
{}_{h_0} T_n = {}_{h_0} T_0 \left( \frac{r h}{\tau} \right)^n \frac{(n(h-1))!}{n!} \binom{n(h-1) + h_0 - 1}{h_0 - 1}.
\end{equation}
\end{Th}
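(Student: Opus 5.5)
The plan has two stages. First I derive the ODE \eqref{eq:mathcalT_general_differential_equation} from the recurrence, exactly as in the tetrahedron case. Then I solve it by reusing the parametrization $y = t + r\mathcal{R}'$ from the proof of Theorem~\ref{thm:general_R}, and extract coefficients with the Lagrange--Bürmann formula (Theorem~\ref{thm:lagrange_bürmann}).

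\textbf{Step 1: the ODE.} Write $M_n = (h-2)n + h_0$ and $s_j = (h-2)j + 1$. In the product
\[
{}_{h_0}\mathcal{T}'(t)\,\mathcal{R}'(t),
\]
the generic term of the first factor carries $t^{M_k-1}/(M_k-1)!$ and that of the second carries $t^{s_j}/s_j!$. Their product is
\[
\frac{t^{M_{k+j}}}{M_{k+j}!}\binom{M_{k+j}}{s_j}.
\]
So the coefficient of $t^{M_n}/M_n!$ is exactly the right-hand side of the recurrence \eqref{eq:T_general_recurrence} divided by $r$, namely $\tfrac{n}{r}\,{}_{h_0}T_n$. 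Since $n = (M_n - h_0)/(h-2)$, this gives
\[
r(h-2)\,\mathcal{T}'\mathcal{R}' = t\,\mathcal{T}' - h_0\,\mathcal{T},
\]
which is \eqref{eq:mathcalT_general_differential_equation}. The $n=0$ term vanishes on both sides.

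\textbf{Step 2: solving the ODE.} From the proof of Theorem~\ref{thm:general_R}, $y = t + r\mathcal{R}'$ satisfies $y^{h-1} = A(y-t)$ with $A = \tau/(hr)$. Hence $t = y - y^{h-1}/A$ and $r\mathcal{R}' = y^{h-1}/A$. The coefficient in the ODE becomes
\[
r(h-2)\mathcal{R}' - t = \frac{(h-1)y^{h-1}}{A} - y,
\]
while
\[
\frac{dt}{dy} = 1 - \frac{(h-1)y^{h-2}}{A}.
\]
Therefore $r(h-2)\mathcal{R}' - t = -y\,\frac{dt}{dy}$. Separating variables in the ODE gives $\frac{d\mathcal{T}}{\mathcal{T}} = h_0\,\frac{dy}{y}$, so $\mathcal{T} = C\,y^{h_0}$. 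This generalizes $\mathcal{T}_3 = Cz^{2n_0}$. Comparing lowest terms, $y = t + \dots$ and $\mathcal{T} = {}_{h_0}T_0\,t^{h_0}/h_0! + \dots$, so $C = {}_{h_0}T_0/h_0!$.

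\textbf{Step 3: coefficient extraction.} I rescale as before, $y = A^{1/(h-2)}z$ and $s = tA^{-1/(h-2)}$, so that $z = s\,\phi(z)$ with $\phi(w) = (1-w^{h-2})^{-1}$. This follows from $z - z^{h-1} = s$. Theorem~\ref{thm:lagrange_bürmann} with $H(w) = w^{h_0}$ gives
\[
[s^m]\,z^{h_0} = \frac{h_0}{m}\,[w^{m-h_0}](1-w^{h-2})^{-m}.
\]
This is nonzero only for $m = M_n$, where it equals $\frac{h_0}{M_n}\binom{M_n+n-1}{n}$. Undoing the scaling contributes a factor $A^{-n}$.

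Combining the pieces,
\[
{}_{h_0}T_n = M_n!\cdot\frac{{}_{h_0}T_0}{h_0!}\cdot A^{-n}\cdot\frac{h_0}{M_n}\binom{n(h-1)+h_0-1}{n}.
\]
This should simplify to
\[
{}_{h_0}T_0\left(\frac{rh}{\tau}\right)^n\frac{(n(h-1)+h_0-1)!}{n!\,(h_0-1)!},
\]
which is \eqref{eq:T_general_explicit} after rewriting it as $\frac{(n(h-1))!}{n!}\binom{n(h-1)+h_0-1}{h_0-1}$.

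The main obstacle is Step 2: spotting that the coefficient $r(h-2)\mathcal{R}' - t$ equals $-y\,dt/dy$, which makes the separated equation integrate to a pure power of $y$. The rest is bookkeeping. As a sanity check, I will confirm that $h=4$, $r=3$, $\tau=12$, $h_0 = 2n_0$ recovers Theorem~\ref{thm:T_surfaces_formula}.
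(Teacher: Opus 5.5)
Your proposal is correct and follows the paper's proof essentially step for step. You use the same coefficient comparison to get the ODE, the same parametrization $y = t + r\mathcal{R}'$ with $y^{h-1} = A(y-t)$, $A = \tau/(hr)$, to integrate it to $\mathcal{T} = \frac{{}_{h_0}T_0}{h_0!}\,y^{h_0}$, and the Lagrange inversion formula with $H(w) = w^{h_0}$ to extract the coefficients. The only cosmetic difference is that you first rescale $y = A^{1/(h-2)}z$ before inverting, whereas the paper applies the formula directly with $\phi(w) = (1 - w^{h-2}/A)^{-1}$; both yield the same factor $A^{-n} = (rh/\tau)^n$.
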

\begin{proof}

Consider the product \({}_{h_0} \mathcal{T}'(t) \mathcal{R}'(t)\)
\[
{}_{h_0} \mathcal{T}'(t) \mathcal{R}'(t) = \left( \sum_{k=0}^{\infty} {}_{h_0} T_k \frac{t^{(h-2)k + h_0 - 1}}{((h-2)k + h_0 - 1)!} \right) \left( \sum_{m=1}^{\infty} R_m \frac{t^{(h-2)m + 1}}{((h-2)m + 1)!} \right).
\]
\[
{}_{h_0} \mathcal{T}'(t) \mathcal{R}'(t) = \sum_{n=1}^{\infty} \sum_{k=0}^{n-1} {}_{h_0} T_k R_{n-k} \frac{t^{(h-2)n + h_0}}{((h-2)k + h_0 - 1)!  ((h-2)(n-k) + 1)!}.
\]
Denote \(m_n = h n - 2n + h_0 = (h-2)n + h_0\).
From recurrence relation \eqref{eq:T_general_recurrence}
\[
\sum_{k=0}^{n-1} \binom{m_n}{(h-2)(n-k) + 1} {}_{h_0} T_k R_{n-k} = \frac{n}{r} \, {}_{h_0} T_n.
\]
Substituting into the expression for \({}_{h_0}\mathcal{T}'(t) \mathcal{R}'(t)\)
\[
{}_{h_0}\mathcal{T}'(t) \mathcal{R}'(t) = \sum_{n=1}^{\infty} \frac{t^{m_n}}{m_n!} \sum_{k=0}^{n-1} \binom{m_n}{(h-2)(n-k) + 1} {}_{h_0} T_k R_{n-k} = \sum_{n=1}^{\infty} \frac{t^{m_n}}{m_n!} \cdot \frac{n}{r} {}_{h_0} T_n.
\]
Since \(n = \frac{m_n - h_0}{h-2}\), then
\begin{align*}
{}_{h_0}\mathcal{T}'(t) \mathcal{R}'(t) &= \frac{1}{r} \sum_{n=1}^{\infty} \frac{m_n - h_0}{h-2}  {}_{h_0} T_n \frac{t^{m_n}}{m_n!}  \\
&= \frac{1}{r(h-2)} \sum_{n=1}^{\infty} \left( m_n - h_0 \right) {}_{h_0} T_n \frac{t^{m_n}}{m_n!} \\
&= \frac{1}{r(h-2)} \sum_{n=0}^{\infty} \left( m_n - h_0 \right) {}_{h_0} T_n \frac{t^{m_n}}{m_n!}.
\end{align*}
Thus
\[
{}_{h_0}\mathcal{T}'(t) \mathcal{R}'(t) = \frac{1}{r(h-2)} \left[ t \frac{\mathrm{d}({}_{h_0}\mathcal{T})}{\mathrm{d}t} - h_0 \cdot {}_{h_0}\mathcal{T}(t) \right].
\]
Move all terms to one side
\[
\left[ r(h-2) \mathcal{R}'(t) - t \right] \frac{\mathrm{d}({}_{h_0}\mathcal{T})}{\mathrm{d}t} = -h_0 \cdot {}_{h_0}\mathcal{T}(t).
\]
Let us rewrite this equation in the form
\[
\frac{\mathrm{d}({}_{h_0}\mathcal{T})}{{}_{h_0}\mathcal{T}} = \frac{h_0  \mathrm{d}t}{t - r(h-2) \mathcal{R}'(t)}.
\]
Integrate both sides
\[
\int \frac{\mathrm{d}({}_{h_0}\mathcal{T})}{{}_{h_0}\mathcal{T}} = h_0 \int \frac{\mathrm{d}t}{t - r(h-2) \mathcal{R}'(t)} + K_1.
\]
Then
\[
{}_{h_0}\mathcal{T}(t) = K_2 \exp\left(h_0 I(t)\right),
\]
where
\[
I(t) = \int \frac{\mathrm{d}t}{t - r(h-2) \mathcal{R}'(t)}.
\]
Let us use the equation \eqref{eq:y_general_equation}:
\( y^{h-1} - A y + A t = 0,\) where \(A = \dfrac{\tau}{r h}\).
Since \(t = y - \frac{y^{h-1}}{A}\) and \(\mathcal{R}'(t) = \frac{y-t}{r} = \frac{y^{h-1}}{A r}\), then
\[
I(t) = \int \frac{\mathrm{d}\left(y - \frac{y^{h-1}}{A}\right)}{y - \frac{y^{h-1}}{A} - r(h-2) \frac{y^{h-1}}{A r}} = \int \frac{\mathrm{d}y\left(1 - (h-1)\frac{y^{h-2}}{A}\right)}{y - \frac{y^{h-1}}{A} - (h-2) \frac{y^{h-1}}{A}} = \int \frac{\mathrm{d}y}{y} = \ln |y| + K_0.
\]
Hence
\[
{}_{h_0}\mathcal{T}(t) = K_3 y^{h_0} =: K_3 g(y),
\]
It remains to apply Theorem \ref{thm:lagrange_bürmann} (\(y(0) = 0, g'(0) = 1-\frac{(h-1)}{A}y^{h-2}(0) \neq 0\))
\begin{align*}
{}_{h_0}\mathcal{T}(y(t)) &= {}_{h_0} \mathcal{T}(0) + \sum_{j=1}^{\infty} \frac{t^j}{j!} \left[ \frac{\mathrm{d}^{j-1}}{\mathrm{d}y^{j-1}} \left( {}_{h_0}\mathcal{T}'(u) \left(\frac{u}{g(u)}\right)^j \right) \right]_{u=0} \\
&= \sum_{j=1}^{\infty} \frac{t^j}{j!} \left[ \frac{\mathrm{d}^{j-1}}{\mathrm{d}y^{j-1}} \left( h_0 u^{h_0-1} \left(1-\frac{1}{A}u^{h-2}\right)^{-j} \right) \right]_{u=0}.
\end{align*}
We are looking for the coefficient of \(u^{j-1}\) in the Maclaurin expansion of
\[
h_0 u^{h_0-1} \left(1-\frac{1}{A}u^{h-2}\right)^{-j} = h_0 u^{h_0-1} \sum_{k=0}^{\infty} \binom{j + k - 1}{k} \left(\frac{u^{h-2}}{A} \right)^{k}.
\]
That is, we need to choose \(k\) such that \(k(h-2)+h_0-1 = j-1 \implies j = k(h-2)+h_0\) (otherwise the coefficient will be \(0\)). This means that
\begin{align*}
\left[ \frac{\mathrm{d}^{j-1}}{\mathrm{d}u^{j-1}} \left( {}_{h_0} \mathcal{T}'(u) \left(\frac{u}{g(u)}\right)^j \right) \right]_{u=0} &= (j-1)! \cdot h_0 \binom{j+k-1}{k} \left(A \right)^{-k} \\
&=(k(h-2)+h_0-1)! \cdot h_0 \binom{k(h-2)+h_0 + k-1}{k} \left(A \right)^{-k}.
\end{align*}
Now it remains to compare with \({}_{h_0} \mathcal{T}(t)\)
\[
{}_{h_0} \mathcal{T}(t) = K_3  \sum_{k=0}^{\infty} \frac{t^{k(h-2)+h_0} \left(A \right)^{-k}}{k(h-2)+h_0} h_0 \binom{k(h-1)+h_0-1}{k} = \sum\limits_{n = 0}^{\infty} {}_{h_0} T_n \frac{t^{(h-2)n+h_0}}{((h-2)n+h_0)!}.
\]
This means that
\begin{align*}
\frac{{}_{h_0} T_k}{((h-2)k+h_0)!} 
&= K_3 \,\frac{h_0 \binom{k(h-1)+h_0-1}{k}}{k(h-2)+h_0} \, A^{-k} \\
&= K_3 \,\frac{h_0 \, (k(h-1)+h_0-1)! \, A^{-k}}{k! \, (k(h-2)+h_0) \, (k(h-2)+h_0-1)!}.
\end{align*}
Thus
\[
{}_{h_0} T_k = K_3 \frac{h_0(k(h-1)+h_0-1)!}{k!} \left(A \right)^{-k}.
\]
Observe that for \(k = 0\): \({}_{h_0}T_0 = h_0 K_3 (h_0-1)! \, A^{-0} \;\;\Longrightarrow\;\; K_3 = \frac{{}_{h_0}T_0}{h_0!},\) and consequently, \({}_{h_0}\mathcal{T}(t) = \frac{{}_{h_0}T_0}{h_0!}\, y^{h_0}.\) Substituting this expression
\begin{align*}
{}_{h_0} T_k &= \frac{{}_{h_0} T_0}{h_0!} \frac{h_0(k(h-1)+h_0-1)!}{k!} \left(A \right)^{-k} \\
&= {}_{h_0} T_0 \frac{(k(h-1)+h_0-1)!}{(h_0-1)! k!} \left(A \right)^{-k} \\
&= {}_{h_0} T_0  \frac{(k(h-1))!}{k! \left(A \right)^{k}} \binom{k(h-1) + h_0 - 1}{h_0 - 1}.
\end{align*}
\end{proof}
Since our constructed figure has \(h_0 + n h - 2 n\) hyperfaces of codimension one, we can introduce a new variable to eliminate enumerations
\begin{equation} \label{eq:T_general_explicit_no_numeration}
{}_{h_0}\widehat{T}_n := \frac{{}_{h_0} T_n}{(h_0 + n(h - 2))!} = {}_{h_0} T_0 \frac{1}{n!} \cdot \frac{(n(h - 1) + h_0 - 1)!}{(h_0 - 1)! (h_0 + n(h - 2))!} \left(\frac{hr}{\tau} \right)^{n}.  
\end{equation}
\begin{corollary}
The asymptotics of \({}_{h_0}\widehat{T}_n\) is given by
\[
{}_{h_0}\widehat{T}_n \sim {}_{h_0} T_0 \frac{(h - 1)^{h_0 - 1/2}}{(h_0 - 1)! \sqrt{2\pi} (h - 2)^{h_0 + 1/2}} \cdot n^{-3/2} \cdot \left( \frac{r h}{\tau} \frac{(h - 1)^{h - 1}}{(h - 2)^{h - 2}} \right)^n.
\] 
\end{corollary}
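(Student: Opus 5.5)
Starting from the closed form \eqref{eq:T_general_explicit_no_numeration}, I would apply Stirling's formula to each factorial that depends on \(n\), exactly as in the proof of Corollary~\ref{cor:widehat_R_asymptotics}. Write \(s=n(h-2)\). Then
\[
{}_{h_0}\widehat{T}_n = \frac{{}_{h_0}T_0}{(h_0-1)!}\left(\frac{hr}{\tau}\right)^n \frac{(s+n+h_0-1)!}{n!\,(s+h_0)!}.
\]
The quantities \(h_0\), \(h\) and \({}_{h_0}T_0\) are fixed, so \((h_0-1)!\) is a constant. Only the ratio of the three factorials needs an asymptotic analysis.

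The first step is to strip off the fixed shifts. For fixed \(a\) and \(m\to\infty\), \((m+a)!\sim m!\,m^{a}\). This gives
\[
(s+n+h_0-1)! \sim (s+n)!\,\bigl(n(h-1)\bigr)^{h_0-1}
\quad\text{and}\quad
(s+h_0)! \sim s!\,\bigl(n(h-2)\bigr)^{h_0}.
\]
It then remains to find the asymptotics of the pure ratio \(\frac{(n(h-1))!}{n!\,(n(h-2))!}\), a multinomial-type coefficient. By Stirling,
\[
\frac{(n(h-1))!}{n!\,(n(h-2))!}\sim \sqrt{\frac{h-1}{2\pi n (h-2)}}\;\frac{(h-1)^{n(h-1)}}{(h-2)^{n(h-2)}}.
\]
In this step the exponential factors \(e^{\pm}\) cancel exactly, and the \(n^{n(\cdot)}\) powers cancel as well.

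Multiplying the pieces together, the power of \(n\) is \(-\tfrac12+(h_0-1)-h_0=-\tfrac32\). The power of \(h-1\) is \(\tfrac12+h_0-1=h_0-\tfrac12\), and the power of \(h-2\) is \(-\tfrac12-h_0\). The constant is \(1/\sqrt{2\pi}\). The exponential growth is \(\bigl(\frac{hr}{\tau}\frac{(h-1)^{h-1}}{(h-2)^{h-2}}\bigr)^n\). Combined with the prefactor \({}_{h_0}T_0/(h_0-1)!\), this is exactly the claimed formula.

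The main obstacle is only bookkeeping. One has to track the polynomial corrections from the shifts \(h_0-1\) and \(h_0\) correctly and not confuse them with the exponential terms. Handling the shifts with \((m+a)!\sim m!\,m^a\) before invoking Stirling avoids the \((1+2/s)^s\)-type manipulations used in the proof of Corollary~\ref{cor:widehat_R_asymptotics}.

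As a check, I would set \(h_0=2\) and \({}_{2}T_0=R_1\). In this case the growth rate matches Corollary~\ref{cor:widehat_R_asymptotics}, and the exponent \(n^{-3/2}\) versus \(n^{-5/2}\) reflects the extra factor \(n\) coming from a distinguished starting polytope, as in Remark~\ref{rem:R_T_connection_surfaces}.
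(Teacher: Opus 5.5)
Your argument is correct and is essentially the paper's proof. Both apply Stirling's formula to the closed form \eqref{eq:T_general_explicit_no_numeration}; the only difference is that you strip the fixed shifts first via $(m+a)!\sim m!\,m^{a}$, whereas the paper applies Stirling to $(n(h-1)+h_0-1)!$ and $(n(h-2)+h_0)!$ directly and then evaluates $\bigl(1+\frac{h_0-1}{n(h-1)}\bigr)^{n(h-1)}\to e^{h_0-1}$ and $\bigl(1+\frac{h_0}{n(h-2)}\bigr)^{n(h-2)}\to e^{h_0}$.

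There is one slip in your sanity check. The comparison in Remark~\ref{rem:R_T_connection_surfaces} starts from a single polytope, i.e.\ $h_0=h$ (there $2n_0=4=h$), not $h_0=2$. With $h_0=h$ and ${}_{h}T_0=R_1=h!/\tau$ one has the exact identity ${}_{h}\widehat{T}_{n-1}=n\widehat{R}_n$, and your formula reproduces it, constant included.
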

\begin{proof}
By Stirling's formula
\begin{align*}
\frac{(n(h - 1) + h_0 - 1)!}{(n(h - 2) + h_0)! n!} &\sim \frac{\sqrt{2\pi (n(h - 1) + h_0 - 1)} \left( \frac{(n(h - 1) + h_0 - 1)}{e} \right)^{(n(h - 1) + h_0 - 1)}}{\sqrt{2\pi (h_0 + n(h - 2))} \left( \frac{(h_0 + n(h - 2))}{e} \right)^{(h_0 + n(h - 2))} \sqrt{2\pi n} \left( \frac{n}{e} \right)^n}  \\
&\sim \frac{e \sqrt{(h - 1)}}{\sqrt{(h - 2)} \sqrt{2\pi n}} \cdot \frac{(n(h - 1) + h_0 - 1)^{n(h - 1) + h_0 - 1}}{(h_0 + n(h - 2))^{h_0 + n(h - 2)} n^n}.
\end{align*}
Let us analyze the following term separately
\begin{align*}
\frac{(n(h - 1) + h_0 - 1)^{n(h - 1) + h_0 - 1}}{(h_0 + n(h - 2))^{h_0 + n(h - 2)} n^n}
&\sim \frac{(n(h - 1))^{n(h - 1) + h_0 - 1} \left(1 + \frac{h_0 - 1}{n(h - 1)}\right)^{n(h - 1) + h_0 - 1}}{n^n (n(h - 2))^{n(h - 2) + h_0} \left(1 + \frac{h_0}{n(h - 2)}\right)^{n(h - 2) + h_0}} \\
&\sim  n^{-1} \frac{(h - 1)^{n(h - 1) + h_0 - 1}}{\left(h - 2\right)^{n(h - 2) + h_0}} \frac{e^{h_0 - 1}}{e^{h_0}} \frac{(1 + \frac{h_0 - 1}{n(h - 1)})^{h_0 - 1}}{\left(1 + \frac{h_0}{n(h - 2)}\right)^{h_0}} \\
&\sim (e n)^{-1} \frac{(h - 1)^{n(h - 1) + h_0 - 1}}{\left(h - 2\right)^{n(h - 2) + h_0}}.
\end{align*}
Thus
\[
\frac{{}_{h_0} T_n}{(h_0 + n(h - 2))!} \sim {}_{h_0} T_0 \frac{(rh)^{n}}{(h_0 - 1)! \tau^n} \cdot \frac{\sqrt{(h - 1)}}{\sqrt{(h - 2)} \sqrt{2\pi n}} \cdot n^{-1} \cdot \left( \frac{(h - 1)^{h - 1}}{(h - 2)^{h - 2}} \right)^n \frac{(h - 1)^{h_0 - 1}}{(h - 2)^{h_0}}.
\]
Simplify
\[
\frac{{}_{h_0} T_n}{(h_0 + n(h - 2))!} \sim {}_{h_0} T_0 \frac{(h - 1)^{h_0 - 1/2}}{(h_0 - 1)! \sqrt{2\pi} (h - 2)^{h_0 + 1/2}} \cdot n^{-3/2} \cdot \left( \frac{r h}{\tau} \frac{(h - 1)^{h - 1}}{(h - 2)^{h - 2}} \right)^n.
\]
\end{proof}
\begin{corollary}
For any triangulable manifold, the number of triangulations grows at least exponentially.
\end{corollary}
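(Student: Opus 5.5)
The plan is to realize the inductive construction on an arbitrary initial triangulation of the manifold and then apply the counting formula for ${}_{h_0}T_n$ from Theorem~\ref{thm:general_T}. Fix a triangulable manifold $M$ of dimension $d\ge 2$ and a triangulation $\mathcal{T}_0$ of it. If $M$ is closed, I take $\mathcal{T}_0$ as the initial complex, so its $h_0$ top-dimensional simplices play the role of the free facets. If $M$ has boundary, I use the free boundary facets instead.

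The inductive step subdivides a chosen top simplex by coning from an interior point, i.e.\ it glues a $d{+}1$-simplex (the polytope of Theorem~\ref{thm:general_T} with $h=d+2$, $r=(d+1)!/2$, $\tau=(d+2)!/2$). This is a stellar subdivision, so it does not change the homeomorphism type. Every complex counted by ${}_{h_0}T_n$ is therefore a triangulation of $M$ with $h_0+n(h-2)$ top simplices.

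By \eqref{eq:T_general_explicit_no_numeration} and the subsequent asymptotic corollary, the symmetry-weighted count ${}_{h_0}\widehat T_n$ grows like $n^{-3/2}\lambda^n$. Here
\[
\lambda=\frac{rh}{\tau}\frac{(h-1)^{h-1}}{(h-2)^{h-2}}>1,
\]
since $\frac{(h-1)^{h-1}}{(h-2)^{h-2}}\ge h-1\ge 3$ and $rh/\tau=1$ for simplices.

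The main obstacle is passing from the weighted count $\sum 1/|\mathrm{Aut}|$ to an honest count of isomorphism classes. Since $1/|\mathrm{Aut}|\le 1$, the number of distinct isomorphism classes of triangulations with $N=h_0+n(h-2)$ facets is at least ${}_{h_0}\widehat T_n$. This already gives exponential growth in $N$, up to the polynomial factor, which is absorbed by slightly decreasing the base.

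A second subtlety is that distinct labeled attachment data might produce isomorphic triangulations of $M$ that are counted in the weighted sum. This is harmless, because the weighted sum over isomorphism classes of the resulting complexes is exactly what ${}_{h_0}\widehat T_n$ measures. One should also check that the resulting complexes are genuine simplicial complexes rather than merely pseudo-complexes; stellar subdivision guarantees this.

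For $d=1$ the statement is degenerate: a circle has exactly one triangulation with each number of edges. I would restrict to $d\ge2$ or interpret the claim through the general $h>2$ hypothesis.
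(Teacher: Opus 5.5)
Your route is the one the paper intends. The corollary is stated with no separate proof right after the asymptotics of ${}_{h_0}\widehat T_n$, and it rests on two facts: attaching simplex trees to top simplices is a sequence of stellar subdivisions, so the topology is unchanged, and ${}_{h_0}\widehat T_n\sim C\,n^{-3/2}\lambda^n$ with $\lambda>1$. Your check of $\lambda$ is fine, and so is your use of $1/|\operatorname{Aut}|\le 1$.

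The gap is your paragraph on the ``second subtlety'', a step the paper also leaves implicit. The number ${}_{h_0}\widehat T_n$ is a weighted count of isomorphism classes of \emph{inductive complexes}. Such a complex is a triangulation of $M$ together with the record of which vertices are new and which $(d+1)$-simplices were attached where. It is not a count of bare triangulations, and forgetting that record can be many-to-one. Your own $d=1$ remark shows the fibres can even be exponentially large. There $h=3$, $r=1$, $\tau=3$ satisfies $h>2$ and gives $\lambda=4$, yet every triangulation obtained from an $h_0$-cycle by $n$ edge subdivisions is the $(h_0+n)$-cycle. So your argument as written would prove exponential growth for the circle, and the sentence ``This is harmless\ldots'' is exactly where it breaks.

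What you must supply is a subexponential bound on these fibres, and this is where $d\ge 2$ is used, not merely $h>2$. For $d\ge 2$, a stellar subdivision of a top simplex $\sigma$ removes only $\sigma$ and keeps all its proper faces, so the $1$-skeleton only grows. Each new vertex $a$ is then adjacent, among the vertices older than $a$, exactly to the $d+1$ vertices of the simplex $F_a$ it subdivided. Hence, once the set $A$ of the $n$ new vertices is specified, the attached $(d+1)$-simplices are exactly the $(d+2)$-cliques of the final $1$-skeleton that meet $A$: such a clique equals $\{a\}\cup F_a$ for its youngest vertex $a$. The whole inductive complex is therefore recovered from the triangulation and $A$.

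Consequently, each isomorphism class of triangulations of $M$ with $h_0+n(h-2)$ facets is the image of at most $\binom{f_0(\mathcal T_0)+n}{n}\le (f_0(\mathcal T_0)+n)^{f_0(\mathcal T_0)}$ classes of inductive complexes, up to a constant factor from orientation conventions. Here $f_0(\mathcal T_0)$ is the number of vertices of $\mathcal T_0$, so this is a polynomial loss in $n$, absorbed into $\lambda^n$. For $d=1$ the argument fails precisely because subdividing an edge deletes that edge.

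A smaller point: manifolds with boundary need no separate case, since stellar subdivision of top simplices works there too. Gluing onto boundary facets instead would make the polytope a $d$-simplex with $h=d+1$, inconsistent with the $h=d+2$ you use afterwards.
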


\begin{rem} 
Let us establish the relationship between \(R_n\) and \({}_{h_0} T_k\) following the analogy of remark \ref{rem:R_T_connection_surfaces}. Starting with a regular polytope, which has \(h\) hyperfaces of codimension \(1\) (i.e., \(h_0 = h\)) and \({}_{h_0} T_0 = R_1 = \frac{h!}{\tau}\), we need to glue \(n-1\) times, therefore we should calculate the ratio
\begin{align*}
\frac{{}_{h_0} T_{n-1}}{R_n} &= \left( \frac{h!}{\tau} \cdot \frac{((n - 1)(h - 1) + h_0 - 1)!}{(h_0 - 1)! (n - 1)!} \left( \frac{r h}{\tau} \right)^n \right) \div \left( \frac{h^n r^{n - 1}}{\tau^n} \cdot \frac{(n(h - 1))!}{n!} \right) \\
&= h! \cdot \frac{((n - 1)(h - 1) + h_0 - 1)!}{(h_0 - 1)! (n - 1)!} \cdot \frac{r}{\tau} \cdot \frac{n!}{(n(h - 1))!} \\
&= h! \cdot \frac{((n - 1)(h - 1) + h_0 - 1)!}{(h_0 - 1)!} \cdot \frac{n!}{(n - 1)!} \cdot \frac{r}{\tau} \cdot \frac{1}{(n(h - 1))!}.
\end{align*}
Using the fact that \(\frac{n!}{(n - 1)!} = n\) and \(h_0 = h\), we obtain
\[
\frac{{}_{h_0} T_{n-1}}{R_n} = n \cdot h! \cdot \frac{((n - 1)(h - 1) + h - 1)!}{(h - 1)!} \cdot \frac{r}{\tau} \cdot \frac{1}{(n(h - 1))!} = n \frac{r h}{\tau}.
\]
For a regular polytope, the relation \( rh = \tau \) holds. This can be verified by direct computation for all regular polytopes or derived from the orbit-stabilizer theorem applied to the action of the automorphism/rotation group on the polytope. With this identity, we obtain a result identical to that stated in remark \ref{rem:R_T_connection_surfaces}. This also simplifies the explicit formulas \eqref{eq:R_general_explicit} and \eqref{eq:T_general_explicit}. However, we have chosen to retain the notation \(\tau\), as recurrence relations of this form can arise not only for regular polytopes but also in other mathematical contexts where the initial conditions may be more complex.
\end{rem}
For cases where \( 2 \leq h-1 \leq 4 \) it is possible to obtain explicit expression for \(y\) from \eqref{eq:y_general_equation} and hence for the generating functions. See Appendix \ref{sec:explicit_expressions_generating_function} for the complete analysis.

Also, we can obtain the same formulas for \(\widehat{R}_n\) and \({}_{h_0}\widehat{T}_n\) by another method. For details, see the Appendix \ref{sec:another_way_R_T}.

\begin{rem}
From theorems \ref{thm:general_R} and \ref{thm:general_T}, we can derive generalizations of the equalities \eqref{eq:R_surfaces_combinatorial} and \eqref{eq:T_surfaces_combinatorial}. To do this, we substitute the explicit expression \eqref{eq:R_general_explicit} into recurrence \eqref{eq:R_general_recurrence}
\begin{align*}
    (n-1) \frac{1}{r} \left( \frac{hr}{\tau} \right)^{n} \frac{(n(h-1))!}{n!}
    = \frac{r}{2} \sum_{k=1}^{n-1} &\binom{hn-2n+2}{(h-2)(n-k)+1}
    \frac{1}{r} \left( \frac{hr}{\tau} \right)^{k} \frac{(k(h-1))!}{k!} \\
    &\times \frac{1}{r} \left( \frac{hr}{\tau} \right)^{n-k} \frac{((n-k)(h-1))!}{(n-k)!}.
\end{align*}
From this, for any natural number \(h>2\), we obtain an impressive relation for binomial coefficients that generalizes \eqref{eq:R_surfaces_combinatorial}
\[
(n-1)\frac{(n(h-1))!}{n!} = \frac{1}{2} \sum\limits_{k=1}^{n-1} \binom{hn-2n+2}{hk-2k+1} \frac{(k(h-1))!}{k!} \frac{((n-k)(h-1))!}{(n-k)!}.
\]
\begin{equation} \label{eq:R_general_combinatorial}
2(n-1) = \sum\limits_{k=1}^{n-1} \binom{(h-2)n+2}{(h-2)k+1} \frac{\binom{n}{k}}{\binom{n(h-1)}{k(h-1)}}.
\end{equation}
By analogy, substitute the explicit formula \eqref{eq:T_general_explicit} into the recurrence \eqref{eq:T_general_recurrence}  (immediately dividing both sides by \({}_{h_0} T_0\) and \(\left( \frac{hr}{\tau} \right)^n\))
\begin{align*}
\frac{(n(h-1))!}{n!} \binom{n(h-1)+h_0-1}{h_0-1}
    &= \frac{r}{n} \sum_{k=0}^{n-1} 
       \binom{(h-2)n+h_0}{(h-2)(n-k)+1} 
       \frac{(k(h-1))!}{k!} \\
    &\quad \times \binom{k(h-1)+h_0-1}{h_0-1} 
       \frac{1}{r} 
       \frac{((n-k)(h-1))!}{(n-k)!}.
\end{align*}
Thus, for any natural numbers \(h>2\) and \(h_0\), we derive a notable equality generalizing \eqref{eq:T_surfaces_combinatorial}
\begin{equation} \label{eq:T_general_combinatorial}
\binom{n(h-1)+h_0-1}{h_0-1} = \frac{1}{n} \sum\limits_{k=0}^{n-1} \binom{(h-2)n+h_0}{(h-2)(n-k)+1} \binom{k(h-1)+h_0-1}{h_0-1} \frac{\binom{n}{k}}{\binom{n(h-1)}{k(h-1)}}.
\end{equation}
By substituting \(h = 2\) we obtain the hockey-stick identity (after making the substitution \(j = k + h_0 - 1\) and \(r = h_0 - 1\), and using the identity \(\binom{n+h_0-1}{h_0} = \frac{n}{h_0} \binom{n+h_0-1}{h_0-1}\)); the details are left to the reader.

Also note that for \( h_0 = 1 \), the formula for an inductive complex reduces to the formula for polytope trees. This has a geometric explanation: if there is only a single hyperface to which we can glue, then we glue exactly a polytope tree at step \( n \). The same fact can also be understood via Lagrange inversion (for inductive polytopes we seek an expansion for \( y^{h_0} \), while for polytope trees we seek \( y \) --- after which we subtract \( t \), divide by a constant, and integrate). Moreover, \eqref{eq:T_general_combinatorial} implies \eqref{eq:R_general_combinatorial}; details are given in Appendix~\ref{sec:T_implies_R}.
\end{rem}

\begin{rem}

Natural generalizations of polytope trees (for the simplex case) were considered in \cite{DJ94,BZ11}, where they were termed LC-triangulations. Using the knowledge of \(R_n\), one can attempt to derive a formula for LC-triangulations, or at least sharpen the estimates, and similarly extend this approach to inductive triangulations (gluing not only trees but also LC-triangulations).
\end{rem}

\begin{quest}
There exist discrete formulations of string theory; the partition function is given by a sum over all non‑isomorphic triangulations of a \(d\)-dimensional manifold. Could one consider a simplified problem: summing only over the triangulations described in this work?
\end{quest}

\begin{quest}
From a purely geometric perspective, it might be interesting to consider polytope trees in Euclidean space that are embedded without self-intersections. Currently, our model does not account for this, and it is unclear how to impose such geometric constraints.
\end{quest}

\section{Cycles in the Dual Graph} \label{sec:cycles}
This section extends our construction from trees of polytopes to complexes that can contain cycles. We derive a recurrence relation that enumerates non‑isomorphic labelings for a connected complex of \(n\) polytopes, where the dual graph has cyclomatic number (first Betti number) of \(\mathsf{b}_1\).

\begin{stat} \label{sec:stat_U_recurrence}
Let \(U_{n,\mathsf{b}_1}\) be the number of non-isomorphic labelings of connected polytopal complex consisting of \(n\) polytopes with circuit rank \(\mathsf{b}_1\), and let \(h > 2\) be the number of codimension one hyperfaces of the polytope, and \(r\) be the size of the automorphism/rotation group of these hyperfaces. Then the following recurrence relation holds
\begin{equation} \label{eq:U_cycles_recurrence}
(n-1+\mathsf{b}_1) U_{n,\mathsf{b}_1} = \frac{r}{2} U_{n,\mathsf{b}_1-1} + \frac{r}{2} \sum_{\mathsf{s}=0}^{\mathsf{b}_1} \sum_{l=1}^{n-1} U_{n-l,\mathsf{b}_1-\mathsf{s}} \, U_{l,\mathsf{s}} \binom{(h-2)n + 2 - 2\mathsf{b}_1}{(h-2)l + 1 - 2\mathsf{s}}.    
\end{equation}
\end{stat}
\begin{proof}
The proof is analogous to the case without cycles with modifications. We also label the external hyperfaces, but the number of gluing places (seams) is \(n-1+\mathsf{b}_1\) (we added \(\mathsf{b}_1\) gluings due to the number of independent cycles, i.e., the cyclomatic number). Summing over all possible splits yields the left-hand side of the recurrence relation.

Unlike the tree case, the right-hand side of the recurrence involves two scenarios, as illustrated in Figure~\ref{fig:dual_cut}.

\begin{figure}[!htbp]
    \centering
    \includegraphics[width=0.90\textwidth]{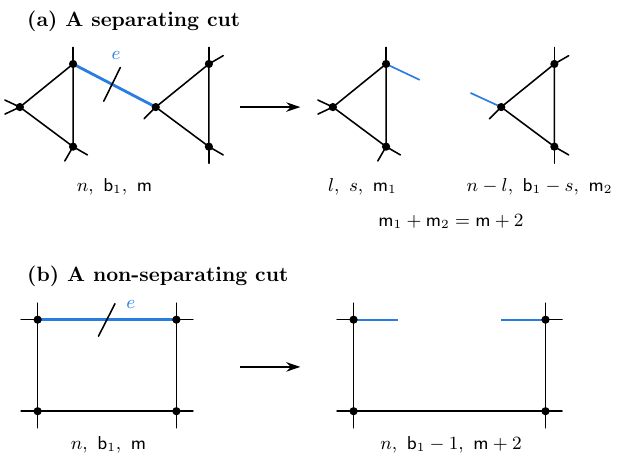}
    \caption{Separating and non-separating cuts in the dual graph.}
    \label{fig:dual_cut}
\end{figure}

\begin{enumerate}
    \item The case where the cut yields a single connected component. This happens when we cut the complex in a way that reduces the cyclomatic number by \(1\) but does not split it into two pieces. Such a cut gives the term \( \frac{r}{2} U_{n,\mathsf{b}_1-1} \). Here, \( r \) is the order of the rotation group of the hyperface (this follows from the fact that we can rotate the hyperface of polytope). The division by \( 2 \) accounts for the symmetry in swapping the labels of the last and second-to-last hyperfaces chosen for gluing.

\item The second scenario corresponds to a reduction in the cyclomatic number, which splits the complex into two connected components containing \(l\) and \(n-l\) polytopes, respectively. These components have cyclomatic numbers \(\mathsf{s}\) and \(\mathsf{b}_1 - \mathsf{s}\).  
Initially there are \(hn\) free hyperfaces. Building a connected complex of \(n\) polytopes requires exactly \(n-1\) gluings (edges of the dual graph); each gluing removes two free hyperfaces, leaving \(hn - 2(n-1)\) free hyperfaces. The cyclomatic number \(\mathsf{b}_1\) counts independent cycles in the dual graph, each consuming two additional free hyperfaces. Consequently, the total number of external faces is  
\[
hn - 2(n-1) - 2\mathsf{b}_1 = (h-2)n + 2 - 2\mathsf{b}_1.
\]  
(Observe that since no further gluings are possible (i.e., all free hyperfaces have been accounted for), we obtain the bound \(\mathsf{b}_1 \le \frac{(h-2)n+2}{2}\)). This yields the product \(U_{n-l,\mathsf{b}_1-\mathsf{s}} \, U_{l,\mathsf{s}}\) together with a binomial coefficient \(\binom{(h-2)n + 2 - 2\mathsf{b}_1}{(h-2)l + 1 - 2\mathsf{s}}\). Also we must account for rotations of the hyperfaces and the symmetry between the pairs \((l, n-l)\) and \((\mathsf{s}, \mathsf{b}_1 - \mathsf{s})\). Taking these into account gives the second term of the right-hand side and, consequently, the desired recurrence.
\end{enumerate}
\end{proof}

Dividing \eqref{eq:U_cycles_recurrence} by \(((h-2)n + 2 - 2\mathsf{b}_1)!\) and introducing \(\widehat{U}_{n,\mathsf{b}_1} = \frac{U_{n,\mathsf{b}_1}}{((h-2)n + 2 - 2\mathsf{b}_1)!}\), we obtain the equivalent recurrence relation
\begin{equation} \label{eq:normalized_U_cycles_recurrence_part1}
(n-1+\mathsf{b}_1) \widehat{U}_{n,\mathsf{b}_1} = \frac{r}{2} \left( S_1 + S_2 \right),    
\end{equation}
where
\begin{equation} \label{eq:normalized_U_cycles_recurrence_part2}
S_1 = \widehat{U}_{n,\mathsf{b}_1-1} \left( (h-2)n + 2 - 2\mathsf{b}_1 + 1 \right) \left( (h-2)n + 2 - 2\mathsf{b}_1 + 2 \right),    
\end{equation}
and
\begin{equation} \label{eq:normalized_U_cycles_recurrence_part3}
S_2 = \sum_{l=1}^{n-1} \sum_{\mathsf{s}=0}^{\mathsf{b}_1} \widehat{U}_{n-l,\mathsf{b}_1-\mathsf{s}} \widehat{U}_{l,\mathsf{s}} \left( (h-2)(n-l) + 2 - 2(\mathsf{b}_1-\mathsf{s}) \right) \left( (h-2)l + 2 - 2\mathsf{s} \right).   
\end{equation}

Let us make the substitutions
\begin{equation} \label{eq:connection_m_b1}
\mathsf{m} = (h-2)n + 2 - 2\mathsf{b}_1 \implies \mathsf{b}_1 = \frac{(h-2)n + 2 - \mathsf{m}}{2}; \quad \widehat{V}_{n,\mathsf{m}} = \widehat{U}_{n,\mathsf{b}_1}.   
\end{equation}
\begin{rem}
Note that \(m\) also has a combinatorial meaning. It denotes the number of unglued hyperfaces such that the construction is connected and its dual graph has first Betti number \(\mathsf{b}_1\).  
\end{rem}
\begin{prop}
Recurrence relation \eqref{eq:normalized_U_cycles_recurrence_part1} can be rewritten using substitution \eqref{eq:connection_m_b1} as
\begin{equation} \label{eq:normalized_V_cycles_recurrence}
\frac{h n - \mathsf{m}}{2} \widehat{V}_{n,\mathsf{m}} = \frac{r}{2} \left[ (\mathsf{m}+1)(\mathsf{m}+2) \widehat{V}_{n, \mathsf{m}+2} + \sum_{l=1}^{n-1} \sum_{\mathsf{m}_1 + \mathsf{m}_2 = \mathsf{m}+2} \mathsf{m}_1 \mathsf{m}_2 \, \widehat{V}_{n-l, \mathsf{m}_1} \widehat{V}_{l, \mathsf{m}_2} \right].
\end{equation}
\end{prop}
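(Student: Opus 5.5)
This is a direct change of variables in \eqref{eq:normalized_U_cycles_recurrence_part1}--\eqref{eq:normalized_U_cycles_recurrence_part3}, treating the left-hand side, $S_1$ and $S_2$ separately. Fix $n$ and $\mathsf{b}_1$, and put $\mathsf{m}=(h-2)n+2-2\mathsf{b}_1$ as in \eqref{eq:connection_m_b1}.

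\emph{Left-hand side.} Substituting $\mathsf{b}_1=\frac{(h-2)n+2-\mathsf{m}}{2}$ gives
\[
n-1+\mathsf{b}_1=\frac{2n-2+(h-2)n+2-\mathsf{m}}{2}=\frac{hn-\mathsf{m}}{2}.
\]
Hence the left-hand side equals $\frac{hn-\mathsf{m}}{2}\widehat{V}_{n,\mathsf{m}}$.

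\emph{The term $S_1$.} Lowering $\mathsf{b}_1$ by one raises the number of free hyperfaces by two. So $\widehat{U}_{n,\mathsf{b}_1-1}=\widehat{V}_{n,\mathsf{m}+2}$, and the factors in \eqref{eq:normalized_U_cycles_recurrence_part2} are $(\mathsf{m}+1)(\mathsf{m}+2)$. This is exactly the first term in the bracket of \eqref{eq:normalized_V_cycles_recurrence}. It is worth recording where these factors come from: $\binom{\mathsf{m}+2}{\cdot}$ is absent here and only the two glued faces are removed, so $\frac{\mathsf{m}!}{(\mathsf{m}+2)!}$ is inverted.

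\emph{The term $S_2$.} Set $\mathsf{m}_2=(h-2)l+2-2\mathsf{s}$ and $\mathsf{m}_1=(h-2)(n-l)+2-2(\mathsf{b}_1-\mathsf{s})$. Then $\widehat{U}_{l,\mathsf{s}}=\widehat{V}_{l,\mathsf{m}_2}$ and $\widehat{U}_{n-l,\mathsf{b}_1-\mathsf{s}}=\widehat{V}_{n-l,\mathsf{m}_1}$, and the two linear factors in \eqref{eq:normalized_U_cycles_recurrence_part3} are precisely $\mathsf{m}_1$ and $\mathsf{m}_2$. Adding the two expressions gives
\[
\mathsf{m}_1+\mathsf{m}_2=(h-2)n+4-2\mathsf{b}_1=\mathsf{m}+2,
\]
which is the gluing constraint: two faces are consumed by the seam.

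The main point needing care is that the double sum over $\mathsf{s}\in\{0,\dots,\mathsf{b}_1\}$ becomes the sum over pairs with $\mathsf{m}_1+\mathsf{m}_2=\mathsf{m}+2$. For fixed $l$, the map $\mathsf{s}\mapsto\mathsf{m}_2$ is injective, with step $-2$. Its image consists of the values with the parity of $(h-2)l$, running between $(h-2)l+2-2\mathsf{b}_1$ and $(h-2)l+2$.

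Pairs $(\mathsf{m}_1,\mathsf{m}_2)$ outside this range need no separate treatment, by the convention that $\widehat{V}_{k,\mathsf{m}'}=0$ whenever $\mathsf{m}'$ does not correspond to an admissible Betti number. This covers $\mathsf{m}'$ of the wrong parity, $\mathsf{m}'>(h-2)k+2$ (which would mean $\mathsf{b}_1<0$), and $\mathsf{m}'\le 0$. Pairs with $\mathsf{m}_1$ or $\mathsf{m}_2$ equal to zero also contribute nothing because of the factor $\mathsf{m}_1\mathsf{m}_2$.

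With this convention the two sums agree term by term. Collecting the pieces and keeping the common factor $\frac{r}{2}$ yields \eqref{eq:normalized_V_cycles_recurrence}.
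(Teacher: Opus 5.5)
Your argument is correct and is the same direct substitution the paper uses: you rewrite $n-1+\mathsf{b}_1$, $S_1$ and $S_2$ in terms of $\mathsf{m}$, $\mathsf{m}_1$, $\mathsf{m}_2$, and you also make explicit the reindexing of the $\mathsf{s}$-sum as the sum over $\mathsf{m}_1+\mathsf{m}_2=\mathsf{m}+2$, which the paper leaves implicit. One small slip: $\mathsf{m}'=0$ does correspond to an admissible Betti number (closed complexes, e.g.\ two tetrahedra glued along all four faces, with $\mathsf{b}_1=3$), so your vanishing convention should be stated only for $\mathsf{m}'<0$; this does not affect the proof, because terms with $\mathsf{m}_1=0$ or $\mathsf{m}_2=0$ are already killed by the factor $\mathsf{m}_1\mathsf{m}_2$.
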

\begin{proof}
From \eqref{eq:connection_m_b1} we got
\[
\widehat{U}_{n,\mathsf{b}_1-1} = \widehat{V}_{n, \mathsf{m}+2}, \quad \left( (h-2)n + 3 - 2\mathsf{b}_1 \right) = \mathsf{m}+1, \quad \left( (h-2)n + 4 - 2\mathsf{b}_1 \right) = \mathsf{m}+2.
\]
We can rewrite \eqref{eq:normalized_U_cycles_recurrence_part2} in terms of \(\mathsf{m}\)
\[
S_1 = (\mathsf{m}+1)(\mathsf{m}+2) \widehat{V}_{n, \mathsf{m}+2}.
\]
For \(\widehat{U}_{n-l,\mathsf{b}_1-\mathsf{s}}\) we have
\[
\mathsf{m}_1 = (h-2)(n-l) + 2 - 2(\mathsf{b}_1-\mathsf{s}),
\]
For \(\widehat{U}_{l,\mathsf{s}}\) we have
\[
\mathsf{m}_2 = (h-2)l + 2 - 2\mathsf{s},
\]
and we have \(\mathsf{m}_1 + \mathsf{m}_2 = \mathsf{m}+2\). Therefore, we can rewrite \eqref{eq:normalized_U_cycles_recurrence_part3}
\[
S_2 = \sum_{l=1}^{n-1} \sum_{\mathsf{m}_1 + \mathsf{m}_2 = \mathsf{m}+2} \mathsf{m}_1 \mathsf{m}_2 \, \widehat{V}_{n-l, \mathsf{m}_1} \, V_{l, \mathsf{m}_2}.
\]
The left-hand side factor of \eqref{eq:normalized_U_cycles_recurrence_part1} becomes
\[
n - 1 + \mathsf{b}_1 = n - 1 + \frac{(h-2)n + 2 - \mathsf{m}}{2} = \frac{h n - \mathsf{m}}{2}.
\]
Hence
\[
\frac{h n - \mathsf{m}}{2} \widehat{V}_{n,\mathsf{m}} = \frac{r}{2} \left[ (\mathsf{m}+1)(\mathsf{m}+2) \widehat{V}_{n, \mathsf{m}+2} + \sum_{l=1}^{n-1} \sum_{\mathsf{m}_1 + \mathsf{m}_2 = \mathsf{m}+2} \mathsf{m}_1 \mathsf{m}_2 \, \widehat{V}_{n-l, \mathsf{m}_1} \widehat{V}_{l, \mathsf{m}_2} \right].
\]
\end{proof}

The combinatorial-geometric argument in the statement \ref{sec:stat_U_recurrence} yields \(\widehat{U}_{n, \mathsf{b}_1} = 0\) whenever \(\mathsf{b}_1 > \frac{(h-2)n+2}{2}\). The following lemma provides an algebraic proof of this fact.

\begin{lem} \label{lem:correctness_U}
Substituting the condition \( \mathsf{b}_1 > \frac{(h-2)n+2}{2} \) into \eqref{eq:normalized_U_cycles_recurrence_part1} forces \(\widehat{U}_{n, \mathsf{b}_1} = 0\), independent of the geometric interpretation.
\end{lem}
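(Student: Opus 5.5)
We argue by strong induction on \(n\), and for fixed \(n\) by induction on \(\mathsf{b}_1\), working with the normalized recurrence \eqref{eq:normalized_U_cycles_recurrence_part1}. It is cleanest to use the variable \(\mathsf{m}=(h-2)n+2-2\mathsf{b}_1\) from \eqref{eq:connection_m_b1}. The condition \(\mathsf{b}_1>\frac{(h-2)n+2}{2}\) is then exactly \(\mathsf{m}<0\), and since \(\mathsf{m}\equiv (h-2)n \pmod 2\), it means \(\mathsf{m}\le -1\). So the claim becomes: \(\widehat{V}_{n,\mathsf{m}}=0\) for every \(\mathsf{m}\le -1\). The base data are the tree values \(\widehat{U}_{n,0}=\widehat{R}_n\) together with \(\widehat{U}_{n,\mathsf{b}_1}=0\) for \(\mathsf{b}_1<0\) (the convention implicit in the recurrence). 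For \(n=1\) we also take \(\widehat{U}_{1,\mathsf{b}_1}=0\) for \(\mathsf{b}_1\ge1\), since the inner sum is empty there.

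Fix \(n\) and \(\mathsf{m}\le -1\), and assume the claim for all smaller \(n\), and for the same \(n\) with fewer cycles (i.e. larger \(\mathsf{m}\) below zero). The left coefficient is \(\frac{hn-\mathsf{m}}{2}>0\), so it suffices to show that the right-hand side of \eqref{eq:normalized_V_cycles_recurrence} vanishes.

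For the term \(S_1=(\mathsf{m}+1)(\mathsf{m}+2)\widehat{V}_{n,\mathsf{m}+2}\), split into cases.
\begin{itemize}
\item If \(\mathsf{m}=-1\) or \(\mathsf{m}=-2\), the polynomial prefactor vanishes. This is where the specific factor \((\mathsf{m}+1)(\mathsf{m}+2)\), coming from dividing the binomial by the factorial, does the work.
\item If \(\mathsf{m}\le -3\), then \(\mathsf{m}+2\le -1\) corresponds to \(\mathsf{b}_1-1\) cycles, and the inner induction gives \(\widehat{V}_{n,\mathsf{m}+2}=0\).
\end{itemize}

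For \(S_2\), consider a pair with \(\mathsf{m}_1+\mathsf{m}_2=\mathsf{m}+2\le 1\). Each summand carries the weight \(\mathsf{m}_1\mathsf{m}_2\). Either some \(\mathsf{m}_i=0\), and the weight kills the term, or some \(\mathsf{m}_i<0\). The latter holds because two positive integers cannot sum to at most \(1\). In that case the corresponding factor \(\widehat{V}_{l,\mathsf{m}_i}\) or \(\widehat{V}_{n-l,\mathsf{m}_i}\) vanishes by the outer induction, since \(1\le l,\,n-l<n\). Hence \(S_2=0\), so \(\widehat{V}_{n,\mathsf{m}}=0\), i.e. \(\widehat{U}_{n,\mathsf{b}_1}=0\).

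The main obstacle is bookkeeping rather than an estimate. One must make sure the induction is well founded: the \(S_1\) term stays at the same \(n\) but lowers \(\mathsf{b}_1\), while \(S_2\) lowers \(n\). One must also check that the boundary cases \(\mathsf{m}=-1,-2\) are exactly those killed by the factor \((\mathsf{m}+1)(\mathsf{m}+2)\). In the unnormalized form \eqref{eq:U_cycles_recurrence} this is hidden in the convention that binomials with negative entries vanish, which is why we pass to \(\widehat{V}\). Finally, one must state clearly that the recurrence is taken with initial values \(\widehat{U}_{n,0}=\widehat{R}_n\) and zero for negative \(\mathsf{b}_1\).
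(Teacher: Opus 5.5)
Your argument is correct and is essentially the paper's proof in Appendix~\ref{sec:correctness_U}: a double induction, outer on $n$ and inner on $\mathsf{m}\le -1$. There $S_1$ vanishes at $\mathsf{m}\in\{-1,-2\}$ because of the factor $(\mathsf{m}+1)(\mathsf{m}+2)$ and otherwise by the inner hypothesis, while $S_2$ vanishes because some $\mathsf{m}_i\le 0$, killed either by the weight $\mathsf{m}_1\mathsf{m}_2$ or by the outer hypothesis. One correction to your setup: do not impose $\widehat{U}_{1,\mathsf{b}_1}=0$ for $\mathsf{b}_1\ge 1$. When $n=1$ only $S_2$ is empty, and the recurrence itself gives $\widehat{U}_{1,1}=\frac{r}{2}h(h-1)\widehat{U}_{1,0}\neq 0$ (a polytope glued to itself). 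Your proof never uses that claim, so nothing breaks once it is deleted.
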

\begin{proof}
    It suffices to show that \(\widehat{V}_{n, \mathsf{m}} = 0\) for all \(\mathsf{m} < 0\); see Appendix \ref{sec:correctness_U}.
\end{proof}

\begin{stat} \label{stat:V_differential_equation}
Suppose we have recurrence relation \eqref{eq:normalized_V_cycles_recurrence} and generating function
\[
\widehat{\mathcal{V}}(t_0,t_1) = \sum_{n \ge 1} \sum_{\mathsf{m} \ge 0} \widehat{V}_{n,\mathsf{m}} t_0^n t_1^{\mathsf{m}}.
\]
Then
\begin{equation} \label{eq:V_differential_equation}
h t_0 \frac{\partial \widehat{\mathcal{V}}}{\partial t_0} - t_1 \frac{\partial \widehat{\mathcal{V}}}{\partial t_1}
= r \frac{\partial^2 \widehat{\mathcal{V}}}{\partial t_1^2} + r \left( \frac{\partial \widehat{\mathcal{V}}}{\partial t_1} \right)^2 .
\end{equation}
\end{stat}
\begin{proof}
The left-hand side of \eqref{eq:normalized_V_cycles_recurrence} can be written as follows
\[
\sum_{n \ge 1} \sum_{\mathsf{m} \ge 0} \left( h n - \mathsf{m} \right) \widehat{V}_{n,\mathsf{m}} \, t_0^n t_1^{\mathsf{m}}
= h\, t_0 \frac{\partial \widehat{\mathcal{V}}}{\partial t_0} - t_1 \frac{\partial \widehat{\mathcal{V}}}{\partial t_1}.
\]
First term on the right-hand side of \eqref{eq:normalized_V_cycles_recurrence} is given by
\[
\sum_{n \ge 1} \sum_{\mathsf{m} \ge 0} (\mathsf{m}+1)(\mathsf{m}+2) \, V_{n,\mathsf{m}+2} \; t_0^n t_1^{\mathsf{m}}
= \frac{\partial^2 \widehat{\mathcal{V}}}{\partial t_1^{2}} .
\]
The second term on the right-hand side of \eqref{eq:normalized_V_cycles_recurrence} is
\begin{equation} \label{eq:convolution_in_terms_m}
\sum_{n \ge 1} \sum_{\mathsf{m} \ge 0} 
\sum_{\substack{1 \le l \le n-1 \\ \mathsf{m}_1 + \mathsf{m}_2 = \mathsf{m}+2}} 
\mathsf{m}_1 \mathsf{m}_2 \; \widehat{V}_{n-l,\,\mathsf{m}_1} \, \widehat{V}_{l,\,\mathsf{m}_2} \; t_0^n t_1^{\mathsf{m}} .   
\end{equation}
Due to \(t_0^n = t_0^{\,n-l} t_0^{\,l}\) and \(t_1^{\mathsf{m}} = t_1^{\mathsf{m}_1 + \mathsf{m}_2 - 2}\), the sum \eqref{eq:convolution_in_terms_m} splits into a product
\[
\left( \sum_{n_1 \ge 1} \sum_{\mathsf{m}_1 \ge 0} \mathsf{m}_1 \, \widehat{V}_{n_1,\mathsf{m}_1} \, t_0^{\,n_1} t_1^{\mathsf{m}_1-1} \right)
\cdot
\left( \sum_{n_2 \ge 1} \sum_{\mathsf{m}_2 \ge 0} \mathsf{m}_2 \, V_{n_2,\mathsf{m}_2} \, t_0^{\,n_2} t_1^{\mathsf{m}_2-1} \right) = \left( \frac{\partial \widehat{\mathcal{V}}}{\partial t_1} \right)^2.
\]
By collecting all terms on the left‑hand and right‑hand sides of the recurrence, we obtain the PDE \eqref{eq:V_differential_equation}.
\end{proof}

\begin{stat} \label{stat:exp_V_series}
Consider the differential equation \eqref{eq:V_differential_equation}. Then
\[
\widehat{\mathcal{V}} = \ln \left( 1 + \sum_{\substack{n \ge 1 \\ \mathsf{m} \ge 0 \\ \mathsf{m} \le hn \\ hn \equiv \mathsf{m} \pmod{2}}} \left(\frac{1}{\tau}\right)^n \frac{1}{n!} \cdot \left( \frac{r}{2} \right)^{\frac{hn - \mathsf{m}}{2}} \cdot \frac{(hn)!}{\mathsf{m}! \cdot \left( \frac{hn - \mathsf{m}}{2} \right)!} \, t_0^{n} t_1^{\mathsf{m}} \right).
\]

\end{stat}

\begin{proof}
Let \(\widehat{\mathcal{V}}(t_0,t_1) = \ln \widehat{\mathcal{W}}(t_0,t_1) \iff \widehat{\mathcal{W}}(t_0,t_1) = e^{\widehat{\mathcal{V}}(t_0,t_1)}\) (combinatorial meaning of \(\widehat{\mathcal{W}}(t_0,t_1)\) is that it counts labeled objects that may be disconnected, consisting of labeled connected components). Then
\[
\frac{\partial \widehat{\mathcal{V}}}{\partial t_0} = \frac{ \frac{\partial \widehat{\mathcal{W}}}{\partial t_0} }{ \widehat{\mathcal{W}} }, \qquad
\frac{\partial \widehat{\mathcal{V}}}{\partial t_1} = \frac{ \frac{\partial \widehat{\mathcal{W}}}{\partial t_1} }{ \widehat{\mathcal{W}} }, \qquad
\frac{\partial^2 \widehat{\mathcal{V}}}{\partial t_1^2} = \frac{ \frac{\partial^2 \widehat{\mathcal{W}}}{\partial t_1^2} \, \widehat{\mathcal{W}} - \left( \frac{\partial \widehat{\mathcal{W}}}{\partial t_1} \right)^2 }{ \widehat{\mathcal{W}}^2 }.
\]
Substituting into the equation gives
\[
h t_0 \frac{ \frac{\partial \widehat{\mathcal{W}}}{\partial t_0} }{ \widehat{\mathcal{W}} }
- t_1 \frac{ \frac{\partial \widehat{\mathcal{W}}}{\partial t_1} }{ \widehat{\mathcal{W}} }
= r \frac{ \frac{\partial^2 \widehat{\mathcal{W}}}{\partial t_1^2} \,\widehat{\mathcal{W}} - \left( \frac{\partial \widehat{\mathcal{W}}}{\partial t_1} \right)^2 }{ \widehat{\mathcal{W}}^2 }
+ r \frac{ \left( \frac{\partial \widehat{\mathcal{W}}}{\partial t_1} \right)^2 }{ \widehat{\mathcal{W}}^2 }
= r \frac{ \frac{\partial^2 \widehat{\mathcal{W}}}{\partial t_1^2} }{ \widehat{\mathcal{W}} }.
\]
Multiplying both sides by \(\widehat{\mathcal{W}}\) yields
\begin{equation} \label{eq:W_PDE}
h t_0 \frac{\partial \widehat{\mathcal{W}}}{\partial t_0} - t_1 \frac{\partial \widehat{\mathcal{W}}}{\partial t_1} = r \frac{\partial^2 \widehat{\mathcal{W}}}{\partial t_1^2}.  
\end{equation}
Now we derive a recurrence relation for the coefficients \(\widehat{W}_{n,m}\) from the PDE \eqref{eq:W_PDE}. Note that
\begin{equation} \label{eq:W_derivation_recurrence_part1}
h \, t_0 \frac{\partial \widehat{\mathcal{W}}}{\partial t_0} = h \, t_0 \sum_{n \ge 0} \sum_{\mathsf{m} \ge 0} n \widehat{W}_{n,\mathsf{m}} t_0^{n-1} t_1^{\mathsf{m}} = h \sum_{n \ge 0} \sum_{\mathsf{m} \ge 0} n \widehat{W}_{n,\mathsf{m}} t_0^n t_1^{\mathsf{m}},   
\end{equation}
and
\begin{equation} \label{eq:W_derivation_recurrence_part2}
- t_1 \frac{\partial \widehat{\mathcal{W}}}{\partial t_1} = - t_1 \sum_{n \ge 0} \sum_{\mathsf{m} \ge 0} \mathsf{m} \widehat{W}_{n,\mathsf{m}} t_0^n t_1^{\mathsf{m}-1} = - \sum_{n \ge 0} \sum_{\mathsf{m} \ge 0} \mathsf{m} \widehat{W}_{n,\mathsf{m}} t_0^n t_1^{\mathsf{m}}.   
\end{equation}
Similarly,
\[
r \frac{\partial^2 \widehat{\mathcal{W}}}{\partial t_1^2} = r \sum_{n \ge 0} \sum_{\mathsf{m} \ge 0} \mathsf{m}(\mathsf{m}-1) \widehat{W}_{n,\mathsf{m}} \, t_0^n t_1^{\mathsf{m}-2}.
\]  
With the substitution \(\mathsf{m}' = \mathsf{m}-2\) (so \(\mathsf{m} = \mathsf{m}'+2\)), we get
\[
r \frac{\partial^2 \widehat{\mathcal{W}}}{\partial t_1^2} = r \sum_{n \ge 0} \sum_{\mathsf{m}' \ge -2} (\mathsf{m}' + 2)(\mathsf{m}' + 1) \widehat{W}_{n, \mathsf{m}' + 2} t_0^n t_1^{\mathsf{m}'}.
\]
Renaming \( \mathsf{m}' \) as \( \mathsf{m} \) gives us
\begin{equation} \label{eq:W_derivation_recurrence_part3}
r \frac{\partial^2 \widehat{\mathcal{W}}}{\partial t_1^2} = r \sum_{n \ge 0} \sum_{\mathsf{m} \ge -2} (\mathsf{m} + 2)(\mathsf{m} + 1) \widehat{W}_{n, \mathsf{m} + 2} t_0^n t_1^{\mathsf{m}} = r \sum_{n \ge 0} \sum_{\mathsf{m} \ge 0} (\mathsf{m} + 2)(\mathsf{m} + 1) \widehat{W}_{n, \mathsf{m} + 2} t_0^n t_1^{\mathsf{m}}.   
\end{equation}
Combining \eqref{eq:W_derivation_recurrence_part1}, \eqref{eq:W_derivation_recurrence_part2}, \eqref{eq:W_derivation_recurrence_part3} and equating coefficients of \( t_0^n t_1^{\mathsf{m}}\)
\begin{equation} \label{eq:W_recurrence_tmp1}
(hn - \mathsf{m}) \widehat{W}_{n,\mathsf{m}} = r (\mathsf{m} + 2)(\mathsf{m} + 1) \widehat{W}_{n, \mathsf{m} + 2}.   
\end{equation}
We can increase \(\mathsf{m}\) until we obtain \(n\) disconnected polytopes, because in that case we have done zero gluings (\(\mathsf{m} = hn\)), thereby maximizing the number of unglued hyperfaces. Note that \(hn\) and \(\mathsf{m}\) must have the same parity --- because at each step we glue hyperfaces in pairs. Hence \(\widehat{W}_{n,\mathsf{m}} = 0\) when \(hn \not\equiv \mathsf{m} \pmod{2}\). If \(hn \equiv \mathsf{m} \pmod{2}\), then there are \(W_{n, h n} = \frac{(hn)!}{\tau^n} \frac{1}{n!}\) such non‑isomorphic partitions, since we have \((hn)!\) possible numerations, each individual polytope has rotation group order \(\tau\), and these polytopes can be permuted among themselves (which gives division by \(n!\)). Consequently,  \(\widehat{W}_{n, h n} = \left(\frac{1}{\tau}\right)^n \frac{1}{n!}.\)

From \eqref{eq:W_recurrence_tmp1} we also deduce that \(\widehat{W}_{n,\mathsf{m}} = 0\) whenever \(\mathsf{m} > hn\). For \(\mathsf{m} < hn\) we obtain the recurrence
\begin{equation} \label{eq:W_main_recurrence}
\widehat{W}_{n, \mathsf{m}} = \frac{r (\mathsf{m}+2)(\mathsf{m}+1)}{h n - \mathsf{m}} \widehat{W}_{n, \mathsf{m}+2}.    
\end{equation}

Applying \eqref{eq:W_main_recurrence} for \(\mathsf{m}, \mathsf{m}+2, \dots, h n - 2\), we obtain
\begin{equation} \label{eq:W_main_product}
\widehat{W}_{n, \mathsf{m}} = \widehat{W}_{n, h n} \cdot \prod_{j = \mathsf{m}/2}^{h n/2 - 1} \frac{r (2j+2)(2j+1)}{h n - 2j}.   
\end{equation}
Compute the numerator in the right-hand side of \eqref{eq:W_main_product}
\[
\prod_{j = \mathsf{m}/2}^{h n/2 - 1} r (2j+2)(2j+1) = r^{(h n - \mathsf{m})/2} \cdot \prod_{j = \mathsf{m}/2}^{h n/2 - 1} (2j+2)(2j+1).
\]
Notice that for \(j\) ranging from \(\mathsf{m}/2\) to \(h n/2 - 1\), the expressions \(2j+1\) and \(2j+2\) cover all integers from \(\mathsf{m}+1\) to \(h n\). Therefore
\[
\prod_{j = \mathsf{m}/2}^{h n/2 - 1} (2j+2)(2j+1) = \frac{(h n)!}{\mathsf{m}!}.
\]
Let us compute the denominator in the right-hand side of \eqref{eq:W_main_product}
\[
\prod_{j = \mathsf{m}/2}^{h n/2 - 1} (h n - 2j) = \prod_{j = \mathsf{m}/2}^{h n/2 - 1} 2\left( \frac{h n}{2} - j \right) = 2^{(h n - \mathsf{m})/2} \prod_{j = \mathsf{m}/2}^{h n/2 - 1} \left( \frac{h n}{2} - j \right).
\]
Introduce the substitution \(i = \frac{h n}{2} - j\). When \(j = \mathsf{m}/2\), \(i = \frac{h n - \mathsf{m}}{2}\); when \(j = h n/2 - 1\), \(i = 1\). Thus
\[
\prod_{j = \mathsf{m}/2}^{h n/2 - 1} \left( \frac{h n}{2} - j \right) = \prod_{i=1}^{(h n - \mathsf{m})/2} i = \left( \frac{h n - \mathsf{m}}{2} \right)!.
\]
Then the product in the right-hand side of \eqref{eq:W_main_product} takes the form
\[
\prod_{j = \mathsf{m}/2}^{h n/2 - 1} \frac{r (2j+2)(2j+1)}{h n - 2j} = \frac{r^{(h n - \mathsf{m})/2} \cdot \frac{(h n)!}{\mathsf{m}!}}{2^{(h n - \mathsf{m})/2} \cdot \left( \frac{h n - \mathsf{m}}{2} \right)!} = \left( \frac{r}{2} \right)^{(h n - \mathsf{m})/2} \cdot \frac{(h n)!}{\mathsf{m}! \cdot \left( \frac{h n - \mathsf{m}}{2} \right)!}.
\]
Finally, the relation between \(\widehat{W}_{n, \mathsf{m}}\) and \(\widehat{W}_{n, h n}\) is expressed by the formula
\begin{equation} \label{eq:W_main_formula}
\widehat{W}_{n, \mathsf{m}} = \widehat{W}_{n, h n} \cdot \left( \frac{r}{2} \right)^{\frac{h n - \mathsf{m}}{2}} \cdot \frac{(h n)!}{\mathsf{m}! \cdot \left( \frac{h n - \mathsf{m}}{2} \right)!} = \left(\frac{1}{\tau}\right)^n \frac{1}{n!} \cdot \left( \frac{r}{2} \right)^{\frac{h n - \mathsf{m}}{2}} \cdot \frac{(h n)!}{\mathsf{m}! \cdot \left( \frac{h n - \mathsf{m}}{2} \right)!}. 
\end{equation}
For the case of regular polytopes, we can replace \(\tau\) with \(r h\), since these quantities are equal.   
\end{proof}

\begin{rem}
Dividing both sides of recurrence \eqref{eq:W_recurrence_tmp1} by \(2\) and restoring labelings via the substitution \(W_{k,\mathsf{s}} = \mathsf{s}! \widehat{W}_{k,\mathsf{s}}\), we can recover its combinatorial meaning. Recall that \(\frac{hn - \mathsf{m}}{2}\) counts the number of cut locations (seams), whereas the right‑hand side corresponds to the rotation of hyperfaces, which equals \(r\). The factor \(1/2\) arises from the symmetry in choosing the last and the second‑last index.
\end{rem}

\begin{quest}
Interestingly, we have derived an explicit formula for \(\widehat{W}_{n, \mathsf{m}}\) of disconnected objects. We conjecture that \(\widehat{U}_{n, \mathsf{b}_1} = \widehat{V}_{n, \mathsf{m}}\) admits no closed-form expression, though this remains unproven. The asymptotic behavior is also unknown.
\end{quest}

\begin{rem}
Given the explicit formula for \(\widehat{W}_{n,\mathsf{m}}\), one can devise a simple combinatorial explanation. Consider \(n\) copies of the polytope; their \(hn\) facets are then distinguishable. Suppose that \(0 \leq \mathsf{m} \leq hn\) and \(hn - \mathsf{m} = 2g\). We first choose the \(\mathsf{m}\) free facets in \(\binom{hn}{\mathsf{m}}\) ways. The remaining \(2g\) facets can be partitioned into unordered pairs in $(2g-1)!!=\frac{(2g)!}{2^g g!}$ ways, and each pair admits \(r\) gluing maps. Thus the number of gluing configurations before taking symmetries into account is $\binom{hn}{\mathsf{m}}(2g-1)!!\,r^g.$ Since disconnected polytopes have an automorphism group of order \(\tau^n n!\), it follows that
\[
\widehat W_{n,\mathsf{m}} = \frac{1}{\tau^n n!}\binom{hn}{\mathsf{m}}(hn-\mathsf{m}-1)!! \, r^{(hn-\mathsf{m})/2}
= \frac{1}{\tau^n n!}\left(\frac{r}{2}\right)^g
\frac{(hn)!}{\mathsf{m}! \, g!}.
\]
An example of such a pairing is shown in Figure~\ref{fig:facet_pairings}
for two tetrahedra with two free facets.
\begin{figure}[!htbp]
    \centering
    \includegraphics[width=0.82\textwidth]{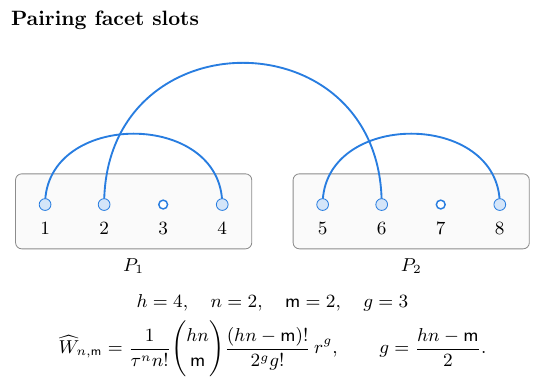}
    \caption{Pairing facet slots, with unpaired slots representing free facets.}
    \label{fig:facet_pairings}
\end{figure}
\end{rem}

It is noteworthy that for fixed \(n\), the maximum of \(\widehat{W}_{n, \mathsf{m}}\) does not occur at \(\mathsf{m}=0\) (the closed manifold). Further details are provided in appendix \ref{sec:maximum_W}.

A natural quantity to consider is \(\widehat{W}_n = \sum_{\mathsf{m}=0}^{h n} \widehat{W}_{n, \mathsf{m}} = [t_0^n] \, \widehat{\mathcal{W}}(t_0, 1)\). We reduce this quantity to Hermite polynomials and obtain its asymptotics using results from \cite{AM83}.
\begin{stat}
The asymptotic behavior of \(\widehat{W}_n\) is as follows
\[
\widehat{W}_n \sim \frac{1}{2 \sqrt{\pi n}} e^{-1/(4 r)} \left(\frac{(r h)^{h/2} e^{1 - h/2}}{\tau}\right)^n n^{n(h/2 - 1)} e^{\sqrt{\frac{h n}{r}}}.
\]  
\end{stat}
\begin{proof}
Since \(h n - \mathsf{m}\) must be even, let
\[
h n - \mathsf{m} = 2 \ell \quad \implies \quad \mathsf{m} = h n - 2 \ell,
\]
where \(\ell = 0, 1, 2, \dots, \left\lfloor \frac{h n}{2} \right\rfloor\).
Then
\[
\frac{h n - \mathsf{m}}{2} = \ell, \quad \mathsf{m}! = (h n - 2 \ell)!, \quad \left(\frac{h n - \mathsf{m}}{2}\right)! = \ell!.
\]
Substituting into \(\widehat{W}_{n, \mathsf{m}}\)
\[
\widehat{W}_{n, h n - 2 \ell} = \left(\frac{1}{\tau}\right)^n \frac{1}{n!} \left(\frac{r}{2}\right)^\ell \frac{(h n)!}{(h n - 2 \ell)! \, \ell!}.
\]
The sum is
\[
\widehat{W}_n = \sum_{\ell=0}^{\lfloor h n/2 \rfloor} \widehat{W}_{n, h n - 2 \ell}
= \left(\frac{1}{\tau}\right)^n \frac{1}{n!} \sum_{\ell=0}^{\lfloor h n/2 \rfloor} \left(\frac{r}{2}\right)^\ell \frac{(h n)!}{(h n - 2 \ell)! \, \ell!}.
\]
The Hermite polynomial \(H_s(x)\) has the representation
\[
H_s(x) = s! \sum_{\ell=0}^{\lfloor s/2 \rfloor} \frac{(-1)^\ell}{\ell! \, (s-2\ell)!} (2x)^{s-2\ell}.
\]
Therefore
\[
H_{h n}\left(\frac{i}{\sqrt{2 r}}\right) = (h n)! \left(i \sqrt{\frac{2}{r}}\right)^{h n} \sum_{\ell=0}^{\lfloor h n/2 \rfloor} \frac{1}{\ell! \, (h n - 2 \ell)!} \left(\frac{r}{2}\right)^\ell.
\]
Thus, we can rewrite the expression for \(\widehat{W}_n\)
\begin{equation} \label{eq:W_n_Hermitian}
\widehat{W}_n = \left(\frac{1}{\tau}\right)^n \frac{1}{n!} \cdot \frac{H_{h n}\left(\frac{i}{\sqrt{2 r}}\right)}{\left(i \sqrt{\frac{2}{r}}\right)^{h n}} = \left(\frac{1}{\tau}\right)^n \frac{1}{n!} \left(\frac{r}{2}\right)^{\frac{h n}{2}} \frac{H_{h n}\left(\frac{i}{\sqrt{2 r}}\right)}{i^{h n}}.  
\end{equation}
The asymptotics of Hermite polynomials (see \cite[p. 508-510, 13.6.38 and 13.5.16]{AM83}) is
\begin{equation} \label{eq:Hermitian_asymptotic}
H_s(x) \sim e^{x^2/2} \left(\frac{2s}{e}\right)^{s/2} \sqrt{2} \cos\left(x\sqrt{2s} - \frac{s\pi}{2}\right).
\end{equation}
We combine \eqref{eq:W_n_Hermitian} and \eqref{eq:Hermitian_asymptotic}
\begin{equation} \label{eq:hatW_asymptotic}
\widehat{W}_n \sim \left(\frac{1}{\tau}\right)^n \frac{1}{n!} e^{-1/(4r)} \sqrt{2} \left(\frac{rhn}{e}\right)^{hn/2} i^{-hn} \cos\left(i\sqrt{\frac{hn}{r}} - \frac{hn\pi}{2}\right)
\end{equation}
Let us separately analyze the expression
\[
\frac{1}{i^{h n}} \cos\left(i \sqrt{\frac{h n}{r}} - \frac{h n \pi}{2}\right).
\]
Since
\[
\cos(u - v) = \frac{e^{i(u - v)} + e^{-i(u - v)}}{2},
\]
we have
\[
\cos\left(i \sqrt{\frac{h n}{r}} - \frac{h n \pi}{2}\right) = \frac{e^{i\left(i \sqrt{\frac{h n}{r}} - \frac{h n \pi}{2}\right)} + e^{-i\left(i \sqrt{\frac{h n}{r}} - \frac{h n \pi}{2}\right)}}{2}.
\]
Using \(i^{-h n} = e^{-i \frac{h n \pi}{2}}\), we get
\[
e^{-i \frac{h n \pi}{2}} \cdot \cos\left(i \sqrt{\frac{h n}{r}} - \frac{h n \pi}{2}\right) = \frac{e^{-\sqrt{\frac{h n}{r}}} e^{-i h n \pi} + e^{\sqrt{\frac{h n}{r}}}}{2}.
\]
Since \(e^{-i h n \pi} = (-1)^{h n}\), we obtain
\[
\frac{1}{i^{h n}} \cos\left(i \sqrt{\frac{h n}{r}} - \frac{h n \pi}{2}\right) = \frac{1}{2}\left(e^{\sqrt{\frac{h n}{r}}} + (-1)^{h n} e^{-\sqrt{\frac{h n}{r}}}\right) \sim \frac{1}{2} e^{\sqrt{\frac{h n}{r}}}.
\]
Now, substituting the asymptotic formula into \eqref{eq:hatW_asymptotic} and also applying Stirling's formula to \(n!\), we obtain 
\[
\widehat{W}_n \sim \frac{1}{2 \sqrt{\pi n}} e^{-1/(4 r)} \left(\frac{(r h)^{h/2} e^{1 - h/2}}{\tau}\right)^n n^{n(h/2 - 1)} e^{\sqrt{\frac{h n}{r}}}.
\] 
Under the condition \( rh = \tau \), the expression can be simplified even more
\[
\widehat{W}_n \sim \frac{1}{2\sqrt{\pi n}} \,
\exp\!\left( h\sqrt{\frac{n}{\tau}} - \frac{h}{4\tau} \right) \,\left( \frac{n\tau}{e} \right)^{n(h/2-1)}.
\]

\end{proof}

\begin{quest}
The quantity \(\widehat{U}_n = \sum_{\mathsf{b}_1=0}^{\left\lfloor \frac{(h-2)n + 2}{2} \right\rfloor} \widehat{U}_{n, \mathsf{b}_1} = [t_0^n] \,\widehat{\mathcal{V}}(t_0, 1)\) presents another interesting case; for this sum, the asymptotics are also unknown, and a closed-form expression seems unlikely.
\end{quest}

\begin{quest}
Another approach to working with \(\widehat{U}_{n, \mathsf{b}_1}\), \(\widehat{V}_{n, \mathsf{m}}\), and \(\widehat{W}_{n, \mathsf{m}}\) is to proceed iteratively: starting from the base case \(\mathsf{b}_1 = 0\), each subsequent step \(\mathsf{b}_1 = k\) is expressed in terms of the solutions obtained for previous values \(\mathsf{b}_1 < k\). However, starting from \(\mathsf{b}_1 = 2\), the problem becomes significantly more difficult, and no clear pattern or closed-form expression emerges. Overall, the structure resembles a hierarchy of differential equations. Can such a hierarchy be rigorously constructed? Furthermore, might our results be relevant to quantum gravity?
\end{quest}

We can also attach complexes (but with the possibility of cycles in the dual graph) to a fixed initial complex. We call the resulting objects \textit{recursive} complexes and derive recurrence relations similar to \eqref{eq:T_surfaces_recurrence} and \eqref{eq:T_general_recurrence}.

\begin{stat} \label{stat:Xi_general_recurrence} 
Let \({}_{\mathsf{m}_0}\widehat{\Psi}_{n,\mathsf{m}}\) denote the number of labelings of connected recursive complexes obtained by attaching \(n\) polytopes to a fixed initial complex, where the initial complex has \(\mathsf{m}_0\) free hyperfaces and the resulting complex has \(\mathsf{m}\) free hyperfaces. Let \(h>2\) be the number of codimension-one hyperfaces of the polytope, and let \(r\) be the order of the automorphism (rotation) group of these hyperfaces. Then the following recurrence relation holds
\begin{equation} \label{eq:Psi_general_recurrence}
\frac{hn+\mathsf{m}_0 - \mathsf{m}}{2} \cdot {}_{\mathsf{m}_0} \Psi_{n, \mathsf{m}} = \frac{r}{2} \cdot {}_{\mathsf{m}_0} \Psi_{n, \mathsf{m}+2} + r \sum\limits_{l=0}^{n-1} \sum\limits_{\mathsf{s}=0}^{\mathsf{m}} {}_{\mathsf{m}_0} \Psi_{l, \mathsf{s}+1} V_{n-l, \mathsf{m}-\mathsf{s}+1} \binom{\mathsf{m}}{\mathsf{s}}.    
\end{equation}
\end{stat}

\begin{proof}
The proof technique combines the ideas of Statements \ref{stat:T_surfaces_recurrence}, \ref{stat:T_general_recurrence}, and \ref{sec:stat_U_recurrence}; many details are omitted for brevity. 

As in the case of Statement \ref{sec:stat_U_recurrence}, we encounter two scenarios: one that splits the complex into two connected components, and another that leaves the number of components unchanged. When the cut results in a single connected component, the contribution is given by the term $\frac{r}{2} \cdot {}_{\mathsf{m}_0} \Psi_{n, \mathsf{m}+2}$ on the right-hand side of \eqref{eq:Psi_general_recurrence}. Multiplication by \(r\) (the order of the rotation group of the hyperface) accounts for the ability to rotate the hyperface, while division by \(2\) accounts for the symmetry between the last and second-to-last hyperfaces chosen for gluing.

The second scenario --- responsible for the second term on the right-hand side of the recurrence \eqref{eq:Psi_general_recurrence} --- corresponds to the case where the complex splits into two connected components. The first component contains the initial fixed complex, i.e., it is a recursive complex with \(\mathsf{s}+1\) free facets obtained by attaching a complex of \(l\) polytopes; its contribution is \({}_{\mathsf{m}_0} \Psi_{l, \mathsf{s}+1}\). The second component is a complex of \((n-l)\) polytopes having \(\mathsf{m}-\mathsf{s}+1\) free facets (the \(+1\) appears because one face is freed after cutting), contributing \(V_{n-l, \mathsf{m}-\mathsf{s}+1}\). Thus, the total result is the sum over all possible \(l\) and \(\mathsf{s}\) of the product of these two terms multiplied by the binomial coefficient that counts the number of ways to distribute the labelings of the external faces.

In contrast to the recurrence \(\eqref{eq:normalized_V_cycles_recurrence}\) from statement \ref{sec:stat_U_recurrence}, the left‑hand side involves \(\frac{hn+\mathsf{m}_0 - \mathsf{m}}{2}\), since the number of free hyperfaces has increased by \(\mathsf{m}_0\) in the initial complex.

Equating the left‑hand side and the right‑hand side of the recurrence yields the desired result.
\end{proof}

By introducing normalized variables $\widehat{V}_{i, \mathsf{j}}$ and $ \widehat{\Psi}_{i, \mathsf{j}} := \dfrac{\Psi_{i, \mathsf{j}}}{\mathsf{j}!}$ into \eqref{eq:Psi_general_recurrence}, we obtain the equivalent equation:
\begin{equation} \label{eq:hat_Psi_general_recurrence}
\begin{split}
\frac{hn+\mathsf{m}_0 - \mathsf{m}}{2} \cdot {}_{\mathsf{m}_0} \widehat{\Psi}_{n, \mathsf{m}} 
&= \frac{r}{2} (\mathsf{m}+1) (\mathsf{m}+2) \cdot {}_{\mathsf{m}_0} \widehat{\Psi}_{n, \mathsf{m}+2} \\
&\quad + r \sum_{l=0}^{n-1} \sum_{\mathsf{s}=0}^{\mathsf{m}} {}_{\mathsf{m}_0} \widehat{\Psi}_{l, \mathsf{s}+1} \widehat{V}_{n-l, \mathsf{m}-\mathsf{s}+1} (\mathsf{s}+1) (\mathsf{m}-\mathsf{s}+1).
\end{split}
\end{equation}

\begin{rem}
Note that \({}_{\mathsf{m}_0}\widehat{\Psi}_{n,\mathsf{m}} = 0\) whenever \(\mathsf{m} < 0\). 
\end{rem}

\begin{stat} \label{stat:Xi_explicit_solution}
Define the generating function ${}_{\mathsf{m}_0}\widehat{\varPsi}(t_0,t_1) = \sum_{n\ge 0,\,\mathsf{m}\ge 0} {}_{\mathsf{m}_0}\widehat{\Psi}_{n,\mathsf{m}}\, t_0^n t_1^{\mathsf{m}}$. Then the coefficients of the transformed generating function ${}_{\mathsf{m}_0} \widehat{\Xi} = \widehat{\mathcal{W}} \cdot {}_{\mathsf{m}_0}\widehat{\varPsi}$, expanded as ${}_{\mathsf{m}_0} \widehat{\Xi}(t_0,t_1) = \sum_{n\ge 0,\;\mathsf{m}\ge 0} {}_{\mathsf{m}_0} \widehat{\xi}_{n,\mathsf{m}}\, t_0^n t_1^{\mathsf{m}}$, satisfy the recurrence relation:
\begin{equation} \label{eq:xi_recurrence_tmp1}
{}_{\mathsf{m}_0}\widehat{\xi}_{n,\mathsf{m}} (hn - \mathsf{m} + \mathsf{m}_0) = {}_{\mathsf{m}_0}\widehat{\xi}_{n,\mathsf{m}+2} \cdot r\,(\mathsf{m}+2)(\mathsf{m}+1)\,, \qquad n\ge 1,\; \mathsf{m}\ge 0.
\end{equation}
The explicit formula for $\widehat{\xi}_{n,\mathsf{m}}$ is given by:
\begin{equation} \label{eq:xi_explicit_formula}
{}_{\mathsf{m}_0}\widehat{\xi}_{n,\mathsf{m}}  = \dfrac{{}_{\mathsf{m}_0}\widehat{\Psi}_{0,\mathsf{m}_0}}{n! (\tau)^n}\;
\left(\frac{r}{2}\right)^{\frac{hn+\mathsf{m}_0-\mathsf{m}}{2}}\;
\frac{(hn+\mathsf{m}_0)!}{\mathsf{m}!\,\left(\frac{hn+\mathsf{m}_0-\mathsf{m}}{2}\right)!}.
\end{equation}
\end{stat}

\begin{proof}

By an argument analogous to the proof of Statement \ref{stat:V_differential_equation}, the recurrence \eqref{eq:hat_Psi_general_recurrence} can be rewritten as a partial differential equation for the generating function:
\begin{equation} \label{eq:widehat_Psi_differential}
h t_0\frac{\partial}{\partial t_0}\left({}_{\mathsf{m}_0}\widehat{\varPsi}\right) - t_1\frac{\partial}{\partial t_1}\left({}_{\mathsf{m}_0}\widehat{\varPsi}\right) + \mathsf{m}_0 \left({}_{\mathsf{m}_0}\widehat{\varPsi} \right)
= r\frac{\partial^2}{\partial t_1^2}\left({}_{\mathsf{m}_0}\widehat{\varPsi}\right) + 2r\frac{\partial}{\partial t_1}\left({}_{\mathsf{m}_0}\widehat{\varPsi}\right)\frac{\partial\widehat{\mathcal{V}}}{\partial t_1}.
\end{equation}
Computing the partial derivatives of $\widehat{\Xi} = \widehat{\mathcal{W}} \cdot {}_{\mathsf{m}_0}\widehat{\varPsi}$ yields:
\[
\frac{\partial\widehat{\Xi}}{\partial t_0} = \widehat{\mathcal{W}}\frac{\partial}{\partial t_0}\left({}_{\mathsf{m}_0}\widehat{\varPsi}\right) + {}_{\mathsf{m}_0}\widehat{\varPsi}\frac{\partial\widehat{\mathcal{W}}}{\partial t_0},\quad
\frac{\partial\widehat{\Xi}}{\partial t_1} = \widehat{\mathcal{W}}\frac{\partial}{\partial t_1}\left({}_{\mathsf{m}_0}\widehat{\varPsi}\right) + {}_{\mathsf{m}_0}\widehat{\varPsi}\frac{\partial\widehat{\mathcal{W}}}{\partial t_1},
\]
\[
\frac{\partial^2\widehat{\Xi}}{\partial t_1^2} = \widehat{\mathcal{W}}\frac{\partial^2}{\partial t_1^2}\left({}_{\mathsf{m}_0}\widehat{\varPsi}\right)
+ 2\frac{\partial\widehat{\mathcal{W}}}{\partial t_1}\frac{\partial}{\partial t_1}\left({}_{\mathsf{m}_0}\widehat{\varPsi}\right)
+ {}_{\mathsf{m}_0}\widehat{\varPsi}\frac{\partial^2\widehat{\mathcal{W}}}{\partial t_1^2}.
\]
Multiplying \eqref{eq:widehat_Psi_differential} by $\widehat{\mathcal{W}} \neq 0$, the left-hand side of the equation takes the form
\begin{equation} \label{eq:left_hand_side_psi}
h t_0\left(\frac{\partial}{\partial t_0}\left({}_{\mathsf{m}_0}\widehat{\Xi}\right) - \left({}_{\mathsf{m}_0}\widehat{\varPsi}\right)\frac{\partial\widehat{\mathcal{W}}}{\partial t_0}\right) - t_1\left(\frac{\partial}{\partial t_1}\left({}_{\mathsf{m}_0}\widehat{\Xi}\right) - \left({}_{\mathsf{m}_0}\widehat{\varPsi}\right)\frac{\partial\widehat{\mathcal{W}}}{\partial t_1}\right) + \mathsf{m}_0\left({}_{\mathsf{m}_0}\widehat{\Xi}\right).  
\end{equation}
Since $\frac{\partial\widehat{\mathcal{W}}}{\partial t_1} = \frac{\partial \exp(\widehat{\mathcal{V}})}{\partial t_1} = \widehat{\mathcal{W}} \frac{\partial\widehat{\mathcal{V}}}{\partial t_1}$, the right‑hand side transforms into:
\begin{equation} \label{eq:right_hand_side_psi}
r\left(\widehat{\mathcal{W}}\frac{\partial^2}{\partial t_1^2}\left({}_{\mathsf{m}_0}\widehat{\varPsi}\right) + 2\frac{\partial\widehat{\mathcal{W}}}{\partial t_1}\frac{\partial}{\partial t_1}\left({}_{\mathsf{m}_0}\widehat{\varPsi}\right)\right) = r\left(\frac{\partial^2}{\partial t_1^2}\left({}_{\mathsf{m}_0}\widehat{\Xi}\right) - \left({}_{\mathsf{m}_0}\widehat{\varPsi}\right)\frac{\partial^2\widehat{\mathcal{W}}}{\partial t_1^2}\right). 
\end{equation}
Equating \eqref{eq:left_hand_side_psi} and \eqref{eq:right_hand_side_psi}, we obtain
\begin{equation*}
\begin{aligned}
& h t_0\frac{\partial}{\partial t_0}\left({}_{\mathsf{m}_0}\widehat{\Xi}\right) 
- t_1\frac{\partial}{\partial t_1}\left({}_{\mathsf{m}_0}\widehat{\Xi}\right) 
+ \mathsf{m}_0\left({}_{\mathsf{m}_0}\widehat{\Xi}\right) \\
& \quad - h t_0 \left({}_{\mathsf{m}_0}\widehat{\varPsi}\right)\frac{\partial\widehat{\mathcal{W}}}{\partial t_0}
+ t_1 \left({}_{\mathsf{m}_0}\widehat{\varPsi}\right)\frac{\partial\widehat{\mathcal{W}}}{\partial t_1} \\
& = r\frac{\partial^2}{\partial t_1^2}\left({}_{\mathsf{m}_0}\widehat{\Xi}\right) 
- r\left({}_{\mathsf{m}_0}\widehat{\varPsi}\right)\frac{\partial^2\widehat{\mathcal{W}}}{\partial t_1^2}.
\end{aligned}
\end{equation*}

\begin{equation} \label{eq:Xi_intermediate_PDE}
\begin{aligned}
{}_{\mathsf{m}_0}\widehat{\varPsi}&\left( -h t_0 \frac{\partial\widehat{\mathcal{W}}}{\partial t_0} + t_1 \frac{\partial\widehat{\mathcal{W}}}{\partial t_1} + r \frac{\partial^2\widehat{\mathcal{W}}}{\partial t_1^2} \right) \\
&= r\frac{\partial^2}{\partial t_1^2}\left({}_{\mathsf{m}_0}\widehat{\Xi}\right) - h t_0\frac{\partial}{\partial t_0}\left({}_{\mathsf{m}_0}\widehat{\Xi}\right) + t_1\frac{\partial}{\partial t_1}\left({}_{\mathsf{m}_0}\widehat{\Xi}\right) - \mathsf{m}_0\left({}_{\mathsf{m}_0}\widehat{\Xi}\right).
\end{aligned}
\end{equation}

Note that from equation \eqref{eq:W_PDE}, the left-hand side of the equation \eqref{eq:Xi_intermediate_PDE} equals zero. This yields the simplified equation
\begin{equation} \label{eq:Xi_PDE}
h t_0\frac{\partial}{\partial t_0}\left({}_{\mathsf{m}_0}\widehat{\Xi}\right) - t_1\frac{\partial}{\partial t_1}\left({}_{\mathsf{m}_0}\widehat{\Xi}\right) + \mathsf{m}_0\left({}_{\mathsf{m}_0}\widehat{\Xi}\right) = r\frac{\partial^2}{\partial t_1^2}\left({}_{\mathsf{m}_0}\widehat{\Xi}\right).
\end{equation}
Let \({}_{\mathsf{m}_0} \widehat{\Xi}(t_0,t_1) = \sum_{n\ge1,\;m\ge0} {}_{\mathsf{m}_0} \widehat{\xi}_{n,m}\, t_0^n t_1^m\). 
Then, following a reasoning similar to the proof of Statement \ref{stat:exp_V_series}, the partial differential equation for the generating function yields the recurrence relation
\begin{equation}\label{eq:xi_explicit_formula_tmp1}
{}_{\mathsf{m}_0}\widehat{\xi}_{n,\mathsf{m}} (hn - \mathsf{m} + \mathsf{m}_0) = {}_{\mathsf{m}_0}\widehat{\xi}_{n,\mathsf{m}+2} \cdot r\,(\mathsf{m}+2)(\mathsf{m}+1)\,, \qquad n\ge 1,\; \mathsf{m}\ge 0.
\end{equation}
For each fixed \(n\), the largest non-zero index \(\mathsf{m}\) in the sequence \({}_{\mathsf{m}_0}\widehat{\xi}_{n,\mathsf{m}}\) is \(M = hn + \mathsf{m}_0\). Moreover, if \(M\not\equiv \mathsf{m} \pmod 2\), then \({}_{\mathsf{m}_0}\widehat{\xi}_{n,\mathsf{m}} = 0\). Both assertions follow from the same argument used to determine the maximal non-zero index for \(\widehat{W}_{n,\mathsf{m}}\).

By direct analogy with recurrence \(\eqref{eq:W_recurrence_tmp1}\) and its closed‑form solution \(\eqref{eq:W_main_formula}\), the explicit formula for \(\widehat{\xi}_{n,\mathsf{m}}\) following from \(\eqref{eq:xi_recurrence_tmp1}\) is obtained via the substitution \(hn \to hn + \mathsf{m}_0\) in \(\eqref{eq:W_main_formula}\)
\[
{}_{\mathsf{m}_0}\widehat{\xi}_{n,\mathsf{m}}  = {}_{\mathsf{m}_0}\widehat{\xi}_{n,hn+\mathsf{m}_0}\;
\left(\frac{r}{2}\right)^{\frac{hn+\mathsf{m}_0-\mathsf{m}}{2}}\;
\frac{(hn+\mathsf{m}_0)!}{\mathsf{m}!\,\left(\frac{hn+\mathsf{m}_0-\mathsf{m}}{2}\right)!}.
\]

\begin{figure}[!htbp]
    \centering
    \includegraphics[width=0.90\textwidth]{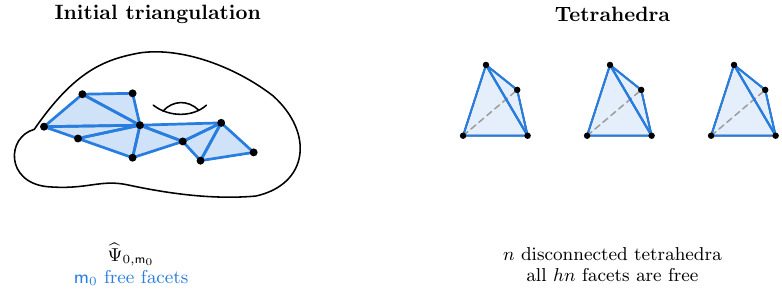}
    \caption{The initial triangulation and \(n\) disconnected
        tetrahedra before any gluings.}
    \label{fig:xi_initial_state}
\end{figure}

It remains to determine \({}_{\mathsf{m}_0}\widehat{\xi}_{n,hn+\mathsf{m}_0}\) from geometric considerations (see Figure~\ref{fig:xi_initial_state} for a visual explanation in the case of tetrahedra). Observe that \({}_{\mathsf{m}_0}\widehat{\Xi}(t_0,t_1)\) is the convolution of \(\widehat{\mathcal{W}}(t_0,t_1)\) and \({}_{\mathsf{m}_0}\widehat{\varPsi}(t_0,t_1)\); geometrically, this means that it counts pairs consisting of a recursive complex and an abstract complex (the latter possibly disconnected, i.e., their count is an element of the expansion \(\widehat{\mathcal{W}}(t_0,t_1)\)). Note that the automorphisms of the recursive complex are the product of the automorphisms of the initial complex (denoted by \( \widehat{\Psi}_{0,\mathsf{m}_0} \)) and those of the complexes built from polytopes (possibly disconnected). The maximal number of free hyperfaces is \(M = hn + \mathsf{m}_0\). This maximum is achieved when the polytopes are disjoint: \(\mathsf{m}_0\) is fixed for the initial complex, while \(hn\) is the maximal number of hyperfaces contributed by \(n\) disconnected polytopes, and gluing can only decrease this number. Since there are \(n!\) ways to permute the polytopes and \(\tau^n\) ways to rotate each polytope independently, we obtain
\[
{}_{\mathsf{m}_0}\widehat{\xi}_{n,\mathsf{m}}  = \dfrac{{}_{\mathsf{m}_0}\widehat{\Psi}_{0,\mathsf{m}_0}}{n! (\tau)^n}\;
\left(\frac{r}{2}\right)^{\frac{hn+\mathsf{m}_0-\mathsf{m}}{2}}\;
\frac{(hn+\mathsf{m}_0)!}{\mathsf{m}!\,\left(\frac{hn+\mathsf{m}_0-\mathsf{m}}{2}\right)!}.
\]
\end{proof}

Knowing \(\widehat{\xi}_{n,\mathsf{m}}\), we can recover the coefficients of \({}_{\mathsf{m}_0}\widehat{\varPsi}(t_0,t_1) = {}_{\mathsf{m}_0} \widehat{\Xi}(t_0,t_1) / \widehat{\mathcal{W}}(t_0,t_1)\) via formal power series division.

\begin{rem}
In analogy with Remark \ref{rem:R_T_connection_surfaces}, we seek to relate \(\widehat{W}_{n, \mathsf{m}}\) and \({}_{\mathsf{m}_0}\widehat{\xi}_{n,\mathsf{m}}\). Starting with one regular polytope, which has \(h\) hyperfaces of codimension \(1\) (i.e., \(\mathsf{m}_0 = h, \, {}_{\mathsf{m}_0}\widehat{\Psi}_{0,\mathsf{m}_0} = \frac{1}{\tau}\)), we need to glue \(n-1\) times, therefore we should calculate the ratio
\[
\begin{aligned}
\frac{{}_{h_0}\widehat{\xi}_{n-1,\mathsf{m}}}{\widehat{W}_{n,\mathsf{m}}}
&= 
\frac{
\frac{{}_{\mathsf{m}_0}\widehat{\Psi}_{0,\mathsf{m}_0}}{(n-1)!\,\tau^{n-1}}
\left(\frac{r}{2}\right)^{\frac{h(n-1)+\mathsf{m}_0-\mathsf{m}}{2}}
\frac{(h(n-1)+\mathsf{m}_0)!}{\mathsf{m}!\left(\frac{h(n-1)+\mathsf{m}_0-\mathsf{m}}{2}\right)!}
}{
\frac{1}{n!\,\tau^{n}}
\left(\frac{r}{2}\right)^{\frac{hn-\mathsf{m}}{2}}
\frac{(hn)!}{\mathsf{m}!\left(\frac{hn-\mathsf{m}}{2}\right)!}
} \\[1.2em]
&= {}_{\mathsf{m}_0}\widehat{\Psi}_{0,\mathsf{m}_0}
\cdot n \cdot \tau
\cdot
\left(\frac{r}{2}\right)^{
\frac{h(n-1)+\mathsf{m}_0-\mathsf{m} - (hn-\mathsf{m})}{2}
}
\cdot
\frac{(h(n-1)+\mathsf{m}_0)!}{(hn)!}
\cdot
\frac{\left(\frac{hn-\mathsf{m}}{2}\right)!}
{\left(\frac{h(n-1)+\mathsf{m}_0-\mathsf{m}}{2}\right)!} \\[1.2em]
&= {}_{\mathsf{m}_0}\widehat{\Psi}_{0,\mathsf{m}_0}
\cdot n
\cdot \tau
\cdot
\left(\frac{r}{2}\right)^{\frac{\mathsf{m}_0-h}{2}}
\cdot
\frac{(h(n-1)+\mathsf{m}_0)!}{(hn)!}
\cdot
\frac{\left(\frac{hn-\mathsf{m}}{2}\right)!}
{\left(\frac{h(n-1)+\mathsf{m}_0-\mathsf{m}}{2}\right)!} \Bigg|_{\substack{\mathsf{m}_0 = h, \\ {}_{\mathsf{m}_0}\widehat{\Psi}_{0,\mathsf{m}_0} = \frac{1}{\tau}}} \\
&= n .
\end{aligned}
\]
\end{rem}

\begin{quest}
From recurrence \eqref{eq:hat_Psi_general_recurrence}, it is clear that the growth rate of \( {}_{\mathsf{m}_0} \widehat{\Psi}_{n, \mathsf{m}} \) is superexponential, as its sum includes the terms \( \widehat{V}_{n-1, \mathsf{m}} \), which itself grows faster than exponentially. However, it is unclear whether a closed-form expression exists for \( {}_{\mathsf{m}_0} \widehat{\Psi}_{n, \mathsf{m}} \), how to determine its precise asymptotic behavior, and how to express \( {}_{\mathsf{m}_0} \widehat{\Psi}_{n, \mathsf{m}}\) explicitly in terms of \( \widehat{V}_{n, \mathsf{m}}\). Similar questions naturally arise for the quantity
\({}_{\mathsf{m}_0} \widehat{\Psi}_{n} = [t_0^n] \, {}_{\mathsf{m}_0} \widehat{\varPsi}(t_0,1)\).
\end{quest}

\begin{quest}
It would be interesting to derive recurrences that count all complexes of a fixed topology, analogous to those in \cite{GS06,GJ08,BGR08}, but this is a very challenging problem. Is it possible to bypass this difficulty by tracking topological invariants instead of the exact topology? Possible directions include: seeking asymptotic bounds instead of exact formulas, restricting attention to a subvector of Betti numbers, focusing on simple topologies such as balls or spheres, or using generating functions to track face counts \(f_i\) with a fixed Euler characteristic \(\chi\).
\end{quest}

Another natural direction is to study the specific structure of triangulations. 

\begin{quest}
Triangulated spheres and balls in dimensions \(d > 2\) remain poorly understood \cite{BZ11}, with some exceptions in three dimensions \cite{CEY12}. Can we improve the bounds of \cite{ALSY25,CEY12,R13} by incorporating our method?
\end{quest}

It is also natural to generalize this to gluings of faces of codimension \(i\) with \(i \ge 2\). In this generalized setting, two cases arise: either several gluings along the same hyperface are allowed (denoted by \({}^{\operatorname{s}}_{i}R_n\)), or they are not allowed (denoted by \({}^{\operatorname{u}}_{i}R_n\)). Under this notation, our original \(R_n\) is written as \({}^{\operatorname{u}}_{1}R_n\).

All recurrences and formulas derived in this work for codimension \(1\) (\({}^{\operatorname{u}}_{1}\widehat{R}_n\), \({}^{\operatorname{u}}_{1, h_0}\widehat{T}_n, {}^{\operatorname{u}}_{1}\widehat{V}_{n, \mathsf{m}}\), and \({}^{\operatorname{u}}_{1}\widehat{W}_{n, \mathsf{m}}\)) naturally extend to the unique-gluing case for any codimension \(i\). The proofs are completely analogous; we only need to update the parameters \(r, h, \tau\). For example, for a \(d\)-dimensional simplex and \(i=2\), the number of codimension-2 hyperfaces is \({}_{2}h = \frac{d(d+1)}{2}\), the rotation group order is \({}_{2}r = \frac{(d-1)!}{2}\), and \(\tau = \frac{(d+1)!}{2}\). We can also define \({}^{\operatorname{all}}_{u}R_n\) as the object that allows unique gluings of any codimension.

\section{Case \texorpdfstring{\(h=2\)}{h=2}} \label{sec:segment}
So far, we have discussed gluings along faces for \(h>2\).  In this section we address the remaining case \(h=2\), which was not covered. For this value, equations \eqref{eq:R_general_recurrence}, \eqref{eq:T_general_recurrence}, and \eqref{eq:U_cycles_recurrence} reduce to a much simpler form. Since the condition \(h\ne 2\) was not used for the remaining quantities, the case \(h=2\) reduces to a direct substitution into those expressions.

The case \(h=2\) represents a one-dimensional simplex (a segment). Its codimension-one facets are points, and the automorphism group of a point is trivial, i.e. \(r=1\). Thus
\begin{equation} \label{eq:app:simple_R_recurrence}
(n-1)R_n = \frac{r}{2} \sum\limits_{k=1}^{n-1} \binom{2}{1} R_k R_{n-k} = \sum\limits_{k=1}^{n-1} R_k R_{n-k}.    
\end{equation}
It is straightforward to verify that the solution is given by \(R_n = R_1^n\) with \(R_1 = \dfrac{2!}{\tau}\), where \(\tau = 2\) or \(1\) depending on whether orientation-reversing automorphisms are allowed. The proof can be carried out by induction or by using generating functions; we leave the details to the interested reader. This construction is illustrated in Figure~\ref{fig:h2_segment_gluings} for unoriented and directed segments, respectively.

\begin{figure}[!htbp]
    \centering
    \includegraphics[width=0.85\textwidth]
        {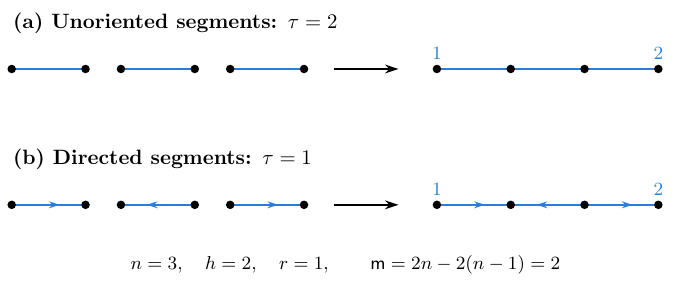}
    \caption{Gluing three initially disconnected segments into a path
in the unoriented and directed cases.}
    \label{fig:h2_segment_gluings}
\end{figure}

Using \(R_n = R_1^n\), we can simplify \eqref{eq:T_general_recurrence}
\begin{equation} \label{eq:app:simple_T_recurrence}
n \cdot {}_{h_0}T_n = r \sum\limits_{k=0}^{n-1} \binom{h_0}{1} {}_{h_0} T_k R_{n-k} = h_0 \sum\limits_{k=0}^{n-1} {}_{h_0} T_k R_1^{n-k}
\end{equation}
Define the generating function \({}_{h_0}\mathrm{T}(t)=\sum_{n\ge 0} {}_{h_0}T_n\,t^n.\) The right‑hand side is a convolution of the sequence \(\{ {}_{h_0}T_k \}\) with the geometric series \(\{R_1^m\}_{m\ge1}\):
\[
\sum_{n\ge1}\left(\sum_{k=0}^{n-1}{}_{h_0}T_k R_1^{\,n-k}\right) t^n
= \left(\sum_{k\ge0}{}_{h_0}T_k t^k\right)\left(\sum_{m\ge1}(R_1t)^m\right)
= {}_{h_0}\mathrm{T}(t)\,\frac{R_1t}{1-R_1t}.
\]
Thus
\[
t\, \cdot  {}_{h_0}\mathrm{T}'(t)=h_0\, \cdot {}_{h_0}\mathrm{T}(t)\,\frac{R_1t}{1-R_1t}.
\]
Cancelling one factor \(t\) yields
\[
\frac{{}_{h_0}\mathrm{T}'(t)}{{}_{h_0}\mathrm{T}(t)}=\frac{h_0 R_1}{1-R_1t}.
\]
Integrating with respect to \(t\) yields
\[
\ln\left({}_{h_0}\mathrm{T}(t)\right) = -h_0\ln(1-R_1t) + C,
\]
where \(C\) is an integration constant. Exponentiating, setting \(t=0\) and using \({}_{h_0}\mathrm{T}(0)={}_{h_0}T_0\) gives
\[
{}_{h_0}\mathrm{T}(t)={}_{h_0}T_0\,(1-R_1t)^{-h_0}.
\]
Expanding by the generalized binomial theorem
\[
(1-R_1t)^{-h_0}=\sum_{n\ge0}\binom{h_0+n-1}{n}(R_1t)^n,
\]
we obtain the closed form
\[
{}_{h_0}T_n = {}_{h_0}T_0\binom{h_0+n-1}{n}\left(\frac{2}{\tau}\right)^{n}.
\]
Finally, we want to deal with
\begin{equation} \label{eq:app:simple_U_recurrence}
\begin{aligned}
(n-1+\mathsf{b}_1) \widehat{U}_{n,\mathsf{b}_1}
&= \frac{r}{2} \left( \sum_{l=1}^{n-1} \sum_{\mathsf{s}=0}^{\mathsf{b}_1} \widehat{U}_{n-l,\mathsf{b}_1-\mathsf{s}} \widehat{U}_{l,\mathsf{s}} \, \left(2 - 2(\mathsf{b}_1-\mathsf{s})\right) \left(2 - 2\mathsf{s}\right) \right. \\
&\qquad \left. + \widehat{U}_{n,\mathsf{b}_1-1} \left(2 - 2\mathsf{b}_1 + 1\right) \left(2 - 2\mathsf{b}_1 + 2\right) \right).
\end{aligned}
\end{equation}
Applying Lemma \ref{lem:correctness_U} with \(h=2\), we see that \(\widehat{U}_{n,\mathsf{b}_1}\) can be non‑zero only for \(\mathsf{b}_1 \le 1\). With \(\widehat{U}_{n,0}=\widehat{R}_n\), the only remaining unknown is \(\widehat{U}_{n,1}\). Substituting \(\mathsf{b}_1 = 1\) yields
\[
n \widehat{U}_{n,1} = \frac{r}{2} \left( \sum_{l=1}^{n-1} \sum_{\mathsf{s}=0}^{1} \widehat{U}_{n-l,1-\mathsf{s}} \widehat{U}_{l,\mathsf{s}} \, \left(2 - 2(1-\mathsf{s})\right) \left(2 - 2\mathsf{s}\right) + \widehat{U}_{n, 1-1} \left(2 - 2 + 1\right) \left(2 - 2 + 2\right) \right).
\]
\[
n \widehat{U}_{n,1} = r \widehat{R}_{n} \implies \widehat{U}_{n,1} = \frac{r}{n} \widehat{R}_{n} = \frac{1}{n} \widehat{R}_{n}.
\]

The results for \(h=2\) can also be derived by an alternative route. Even in this simplest case, the alternative approach not only provides an independent verification of our method but also reveals that the case \(h=2\) already yields non-trivial results and interesting interpretations.

We have derived that \(R_n = \left(\frac{2}{\tau}\right)^n \implies \widehat{R}_n = \frac{1}{2} \left(\frac{2}{\tau}\right)^{n}\). When \(\tau = 2\) (i.e., when orientation‑reversing automorphisms are allowed), this yields a single numbering of the two terminal vertices of the path graph. In contrast, if \(\tau = 1\), we consider the edges with a distinguished direction, and the count becomes \(R_n = 2^n\), since each of the \(n\) segments can be oriented independently.

As an example for the inductive triangulation, let the initial graph be the star graph \(S_q\) with $q \ge 2$, and let the automorphism group be the cyclic shifts (see Figure~\ref{fig:star_graph_induction} for an illustration). Under these assumptions, the formula reduces to
\[
{}_{q}T_n = {}_{q}T_0\binom{q+n-1}{n}\left(\frac{2}{\tau}\right)^{n} = \frac{q!}{|C_q|} \binom{q+n-1}{n} = \frac{(n+q-1)!}{n!}.
\]
\begin{figure}[!htbp]
    \centering
    \includegraphics[width=0.78\textwidth]{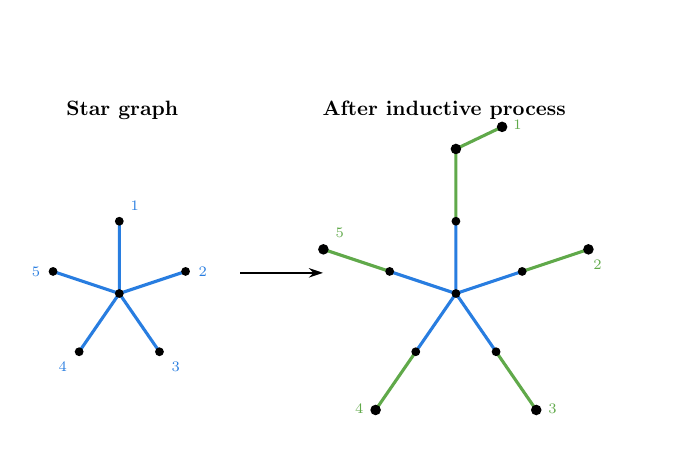}
    \caption{Attaching segments to a star graph: blue edges form the initial star, while green edges are added inductively.}
    \label{fig:star_graph_induction}
\end{figure}

We now show that this formula holds independently of our method. The resulting tree has \(q\) branches whose lengths we denote by \( a_1, a_2, \dots, a_q \ge 1 \). The total number of edges is \( n+q \), so \( \sum_{i=1}^q a_i = n+q \). This is equivalent to \( \sum_{i=1}^q a_i' = n \), where \( a_i' = a_i - 1 \) are nonnegative integers. The number of ordered nonnegative integer solutions is \(\binom{n+q-1}{q-1}\), which is obtained by placing \(q-1\) dividers among the \(n+q-1\) available positions (stars-and-bars). For each fixed ordered \(q\)-tuple \( (a_1,\dots,a_q) \), there are \( q! \) ways to label the \(q\) leaves with distinct labels \(1,\dots,q\). We identify labelings that differ by a cyclic permutation of the labels (group \( C_q \) of order \(q\)), so the number of equivalence classes per tuple is \(\frac{q!}{|C_q|} = (q-1)!\). Therefore, the total number of non‑isomorphic labelings is $(q-1)! \binom{n+q-1}{q-1} = \frac{(n+q-1)!}{n!}$.

For graphs consisting solely of a cycle (i.e., regular polygon, see Figure~\ref{fig:dihedral_group}), the automorphism group is the dihedral group of order \(2n\). This coincides with \(\widehat{U}_{n,1} = \frac{1}{n} \widehat{R}_{n} = \frac{1}{2n}.\)

\begin{figure}[!htbp]
    \centering
    \includegraphics[width=0.25\textwidth]{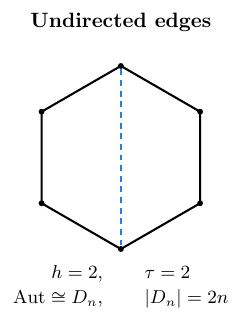}
    \caption{Symmetries of a polygon with undirected edges.}
    \label{fig:dihedral_group}
\end{figure}

\section*{Acknowledgements}

The work of Natalia Amburg was
funded by the RSF (Grant No. 24-12-00178). We are also grateful to the HSE Faculty of Mathematics for enabling this collaboration.

\section{Declaration of AI Usage}

The main ideas of this work were conceived in 2023–2024 as part of a bachelor's thesis, when AI was not yet powerful enough, and the authors did not use it for this task. Beginning in 2025, AI became more useful and assisted the authors with computations and literature search, and helped improve readability, structure, and English. The authors take full responsibility for the content.

Furthermore, an AI-assisted formalization of nearly the entire work was obtained in Lean 4. Some facts in this formalization are proved differently from the exposition in the paper. The formalization itself is available at \url{https://github.com/alialiev78/Polytope_Trees_Lean_Formalization}.

\printbibliography

\begin{appendices}
\appendixpage

\section{Closed-Form Generating Functions for Special Cases} \label{sec:explicit_expressions_generating_function}

We have already derived the series expansions for the functions \(\mathcal{R}(t)\) and \(_{h_0}\mathcal{T}(t)\). However, under the condition \(2 \leq h - 1 \leq 4\) --- where \(h\) represents the number of hyperfacets of codimension \(1\) --- these functions can also be expressed in closed form. Among the regular polytopes, only the $4$‑dimensional simplex, the pentagon, the square, and the triangle satisfy the condition. (The $3$‑dimensional simplex, i.e., the tetrahedron, has already been treated in Section~\ref{sec:R_surfaces}; the verification that all other regular polytopes violate the requirement \(2 \leq h - 1 \leq 4\) is left as an exercise for the reader.)

\begin{stat} \label{stat:generating_4_general}
Let \(h, r, \tau, h_0, y\) be quantities defined in \ref{thm:general_R}, \ref{thm:general_T}. In this statement, \(h = 5\).
We define the following quantities
\begin{align*}
    u = k^2 &= \sqrt[3]{\frac{1}{2} + \sqrt{\frac{1}{4} - \frac{64t^3}{27}}} + \sqrt[3]{\frac{1}{2} - \sqrt{\frac{1}{4} - \frac{64t^3}{27}}},\\
    \ell &= \left( k^2 - \frac{1}{k} \right) \frac{1}{2},\quad
    y = \frac{k - \sqrt{k^2 - 4\ell}}{2}.
\end{align*}
Then
\begin{equation}
\mathcal{R}'(t) = \frac{y - t}{r},\quad {}_{h_0} \mathcal{T}(t) = \frac{{}_{h_0} T_0}{h_0!} y^{h_0}.
\end{equation}
The generating function $\mathcal{R}(t)$ itself is equal to
\begin{equation*}
\begin{aligned}
\mathcal{R}(t) = \frac{1}{r} \left( \frac{y^5}{5} - \frac{(y-t)^2}{2} \right) = \frac{1}{r} \left( \frac{y^5}{5} - \frac{y^8}{2} \right).
\end{aligned}
\end{equation*}
The value of \(r\) is 12 for a four‑dimensional simplex and 1 for a pentagon.
\end{stat}

\begin{proof}
We need to solve the equation
\[
y^4 = y - t \implies y^4 - y + t = 0.
\]
We represent \(y^4 - y + t\) as a product \((y^2 + ky + m)(y^2 - ky + \ell)\). Then, using the method of undetermined coefficients, we obtain the following system
\[
\begin{cases}
m + \ell - k^2 = 0 \\
k(-m + \ell) = -1 \\
m\ell = t
\end{cases}
\iff
\qquad
\begin{cases}
m = \left( k^2 + \frac{1}{k} \right) \frac{1}{2} \\
\ell = \left( k^2 - \frac{1}{k} \right) \frac{1}{2} \\
m\ell = t
\end{cases}
;
\]
From this system we obtain \(k^4-\frac{1}{k^2} = 4t\), \(k^6-1 = 4tk^2\). Let us use Cardano's formula for \(k^2\)
\[
Q = \left( \frac{-1}{2} \right)^2 + \left( \frac{-4t}{3} \right)^3 = \frac{1}{4} - \frac{64t^3}{27}.
\]
It is sufficient for us to choose any root; let us take \(k^2 = \sqrt[3]{\frac{1}{2} + \sqrt{\frac{1}{4} - \frac{64t^3}{27}}} + \sqrt[3]{\frac{1}{2} - \sqrt{\frac{1}{4} - \frac{64t^3}{27}}}.\)
Note also that \(k^2(0) = \sqrt[3]{\frac{1}{2} + \sqrt{\frac{1}{4}}} + 0 = 1\). This means that \(k(0) = 1, m(0) = 1, \ell(0) = 0\). Taking this into account and the fact that \(0 =\mathcal{R}'(0) = \frac{2(y(0) - 0)}{r}\) we select the unique root from 4 candidates
\[
y_{1,2} = \frac{-k \pm \sqrt{k^2-4m}}{2}, y_{3,4} = \frac{k \pm \sqrt{k^2-4\ell}}{2}.
\]
Substituting \(0\): \(y_{1,2}(0) = \frac{-1 \pm \sqrt{1-4}}{2} \neq 0\), \(y_{3,4}(0) = \frac{1 \pm \sqrt{1-0}}{2}\). The one that suits us is \(y = y_4 = \frac{k - \sqrt{k^2-4\ell}}{2}\). From here we find \(\mathcal{R}'_{4}(t) = \frac{y-t}{r}\).

Now let us find $\mathcal{R}_{4}$. Instead of expressing the integrand in terms of $t$ (or $u$), we can easily compute the integral by changing the variable of integration to $y$. Since $t = y - y^4$, the differential is:
\[
\mathrm{d}t = (1 - 4y^3) \, \mathrm{d}y.
\]
Integrating with respect to $t$:
\begin{align*}
\mathcal{R}(t) &= \int \frac{y^4}{r} \, \mathrm{d}t = \int \frac{y^4}{r} (1 - 4y^3) \, \mathrm{d}y \\
&= \frac{1}{r} \int (y^4 - 4y^7) \, \mathrm{d}y \\
&= \frac{1}{r} \left( \frac{y^5}{5} - \frac{y^8}{2} \right) + C.
\end{align*}
To find the integration constant $C$, we evaluate the function at $t=0$. As shown earlier, $y(0) = 0$, so:
\[
0 = \mathcal{R}(0) = \frac{1}{r} \left( \frac{0^5}{5} - \frac{0^8}{2} \right) + C \implies C = 0.
\]
Finally, since $y^4 = y - t$, we can rewrite $y^8$ as $(y-t)^2$, yielding the final expression:
\[
\mathcal{R}(t) = \frac{1}{r} \left( \frac{y^5}{5} - \frac{(y-t)^2}{2} \right).
\]
It remains to substitute the values of \(r\). In the case of a four-dimensional simplex \(r = \frac{4!}{2} = 12\). In the case of a pentagon \(r = 1\).
\end{proof}

\begin{stat}
For the square we have
\[
y^3 - y + t = 0, \quad \mathcal{R}^{\mathrm{square}} = 3\left(\frac{y^4}{12} - \frac{y^6}{6}\right).
\]
\end{stat}

\begin{proof}
For the square we need to solve the equation \(y^3 - y + t = 0\). 
We already solved such an equation in Theorem~\ref{thm:R_surfaces_solution} and obtained the answer:
\[
y = \sqrt[3]{ -\frac{t}{2} + \sqrt{ \frac{t^2}{4} - \frac{1}{27} } } \;+\; \sqrt[3]{ -\frac{t}{2} - \sqrt{ \frac{t^2}{4} - \frac{1}{27}} }.
\]
In the case of the square only the following changes occur:
\[
(\mathcal{R}')^{\text{square}}(t) = \frac{y-t}{r} = \frac{3}{r}\cdot\frac{y-t}{3} = \frac{3}{r}\,\mathcal{R}'_3 
\;\Longrightarrow\; \mathcal{R}^{\text{square}} = \frac{3}{r}\left(\frac{y^4}{12} - \frac{y^6}{6}\right).
\]
For the square \(r = 1\). This completes the proof.
\end{proof}

The last polytope that remains to be considered is the triangle.
\begin{stat}
    \begin{equation}
    \mathcal{R}_{2}(t) = \frac{1}{2} \left(t - t^2 + \frac{(1 - 4t)^{3/2}}{6} \right) - \frac{1}{12}.
    \end{equation}
\end{stat}

\begin{proof}
We need to solve the equation for $p = \mathcal{R}'_2$
\[ 
p = (t+ p)^2 \iff p^2+2tp+t^2 = p \iff p^2+p(2t-1)+t^2 = 0.
\]
\[
\mathcal{R}'_2 = \frac{1-2t \pm \sqrt{4t^2-4t+1-4t^2}}{2} = \frac{1-2t \pm \sqrt{1-4t}}{2}.
\]
Since \(\mathcal{R}'_2(0) = 0\), we have \(\mathcal{R}'_2= \frac{1-2t - \sqrt{1-4t}}{2}\). Integrating, we get \[\mathcal{R}_2(t) = \frac{1}{2} \left(t - t^2 + \frac{(1 - 4t)^{3/2}}{6} \right) + C_1.\] Given that \(\mathcal{R}_2(0) = 0,\) it follows that \(C_1 = \frac{-1}{12}.\) Moreover, \(y = t+ \frac{2! \mathcal{R}'_2}{2} = t+\mathcal{R}'_2 = \frac{1 - \sqrt{1-4t}}{2}\).
\end{proof}

\section{Alternative Proofs of the Explicit Formulas for Regular Polytopes} \label{sec:another_way_R_T}

Substituting \(\widehat{R}_n = \frac{R_n}{\left(n(h-2)+2\right)!}\) into the recurrence relation  \eqref{eq:R_general_recurrence}, we obtain
\begin{equation} \label{eq:app:R_general_recurrence}
(n-1)\widehat{R}_n = \frac{r}{2} \sum\limits_{k=1}^{n-1} ((h-2)k+2) \widehat{R}_k ((h-2)(n-k)+2) \widehat{R}_{n-k}.
\end{equation}
For an arbitrary initial complex, a corresponding recurrence relation can be derived similarly
\begin{equation} \label{eq:app:eq:T_general_recurrence}
n \cdot {}_{h_0}\widehat{T}_n = r \sum\limits_{k=0}^{n-1} ((h-2)k+h_0) \cdot {}_{h_0}\widehat{T}_k ((h-2)(n-k)+2)  \widehat{R}_{n-k}.
\end{equation}
The explicit formulas can be derived using two approaches. The first approach was introduced and developed above. Another approach is to consider
\(\widetilde{\mathcal{R}}(t) = \sum\limits_{n=1}^{\infty} \widehat{R}_n t^{n}\) and
\( {}_{h_0} \widetilde{\mathcal{T}}(t) = \sum_{n=0}^{\infty} {}_{h_0} \widehat{T}_n t^{n} \). Let us also introduce an auxiliary sequence
\[
S_n = \left((h-2)n + 2\right) \widehat{R}_n, \quad S(t) = \sum_{n=1}^{\infty} S_n t^n.
\]
Then the relation \eqref{eq:app:R_general_recurrence} can be rewritten as
\[
(n-1) \frac{S_n}{(h-2)n + 2} = \frac{r}{2} \sum_{k=1}^{n-1} S_k S_{n-k}.
\]
Note that
\[
\sum_{n=2}^{\infty} (n-1) \widehat{R}_n t^n = t \widetilde{\mathcal{R}}'(t) - \widetilde{\mathcal{R}}(t)
\]
and
\[
\sum_{n=2}^{\infty} \left( \frac{r}{2} \sum_{k=1}^{n-1} S_k S_{n-k} \right) t^n = \frac{r}{2} (S(t))^2.
\]
Consequently, we obtain
\[
t \widetilde{\mathcal{R}}'(t) - \widetilde{\mathcal{R}}(t) = \frac{r}{2} (S(t))^2 = \frac{r}{2} \left((h-2)t \widetilde{\mathcal{R}}'(t) + 2 \widetilde{\mathcal{R}}(t)\right)^2.
\]
Thus, we have a system of two equations
\[
\begin{cases}
(h-2)t \widetilde{\mathcal{R}}'(t) + 2 \widetilde{\mathcal{R}}(t) = S(t), \\
t \widetilde{\mathcal{R}}'(t) - \widetilde{\mathcal{R}}(t) = \dfrac{r}{2} (S(t))^2.
\end{cases}
\]
Express \( t \widetilde{\mathcal{R}}'(t) \) from the second equation and substitute it into the first
\[
(h-2)\left(\widetilde{\mathcal{R}}(t) + \frac{r}{2} S^2(t)\right) + 2 \widetilde{\mathcal{R}}(t) = S(t).
\]
\begin{equation} \label{eq:app:y_R_S_connection}
h \widetilde{\mathcal{R}}(t) = S(t) - \frac{r(h-2)}{2} S^2(t) \implies
\widetilde{\mathcal{R}}(t) = \frac{S(t)}{h} - \frac{r(h-2)}{2h} S^2(t).
\end{equation}
Differentiate both sides of the last equality
\[
\widetilde{\mathcal{R}}'(t) = \left( \frac{1}{h} - \frac{r(h-2)}{h} S(t) \right) S'(t) \implies t \widetilde{\mathcal{R}}'(t) = t \left( \frac{1 - r(h-2)S(t)}{h} \right) S'(t).
\]
On the other hand, from the previous step
\begin{equation*}
t \widetilde{\mathcal{R}}'(t) = \widetilde{\mathcal{R}}(t) + \frac{r}{2} S(t)^2 = \left( \frac{S}{h} - \frac{r(h-2)}{2h} S^2 \right) + \frac{r}{2} S^2 = \frac{S + r S^2}{h}.    
\end{equation*}
Therefore
\[
t \cdot \frac{1 - r(h-2)S}{h} \cdot S' = \frac{S(1 + rS)}{h}.
\]
Multiply by \( h \), separate variables, and integrate
\[
\int \frac{1 - r(h-2)S}{S(1 + rS)} \, \mathrm{d}S = \int \frac{\mathrm{d}t}{t}.
\]
Since
\[
\frac{1 - r(h-2)S}{S(1 + rS)} = \frac{1}{S} - \frac{r(h-1)}{1 + rS},
\]
Then
\[
\ln|S| - (h-1)\ln|1 + rS| = \ln|t| + K_1.
\]
\[
K_2 t = \frac{S(t)}{(1 + r S(t))^{h-1}}.
\]
Since
\[
\widetilde{\mathcal{R}}(t) = \widehat{R}_1 t + O(t^2), \quad S(t) = S_1 t + O(t^2),
\]
we have
\[
S_1 t + O(t^2) = (h-2)t(\widehat{R}_1 + O(t)) + 2(\widehat{R}_1 t + O(t^2)) = h\widehat{R}_1 t + O(t^2) \implies S_1 = h\widehat{R}_1.
\]
By comparing the coefficients of \( t^1 \), we obtain
\[
K_2 t = (S_1 t + O(t^2)) \cdot (1 + r(S_1 t + O(t^2)))^{1-h}.
\]
Expand the second factor using the binomial theorem, keeping only terms up to \( O(t) \)
\[
K_2 t = (S_1 t + O(t^2)) \cdot (1 + O(t)) \implies K_2 = S_1 = h\widehat{R}_1.
\]
Thus
\begin{equation} \label{eq:app:another_way_equation}
t = \frac{S(t)}{h\widehat{R}_1(1 + r S(t))^{h-1}}. 
\end{equation}
Set \(\widetilde{y} = 1 + rS\). Then
\[
r h\widehat{R}_1 \, t = \frac{\widetilde{y} - 1}{\widetilde{y}^{\,h-1}} \implies \widetilde{y} = 1 + (r h\widehat{R}_1 t)\,\widetilde{y}^{\,h-1}.
\]  
This is exactly the equation in Corollary \ref{cor:lagrange_application1} with \(d = h-1\) and the independent variable replaced by \(r h\widehat{R}_1 t\). Hence  
\[
\widetilde{y}(t) = \sum_{n=0}^{\infty} \frac{1}{(h-2)n+1}\binom{(h-1)n}{n} (r h\widehat{R}_1 t)^n.
\]
Returning to the function $S$, we obtain
\[
S(t) = \sum_{n=1}^{\infty} \frac{t^n}{n!} (h\widehat{R}_1)^n \cdot r^{n-1} \cdot \frac{[n(h-1)]!}{[n(h-1) - n + 1]!}.
\]
Hence
\[
\widehat{R}_n = \frac{S_n}{(h-2)n + 2} = \frac{1}{r} (h\widehat{R}_1 r)^n \cdot \frac{[n(h-1)]!}{n! [n(h-2) + 2]!}.
\]
Since \( \frac{(h-1)!}{R_1} = \frac{\tau}{h} \), then \( \widehat{R}_1 = \frac{R_1}{((h-2) \cdot 1 + 2)!} = \frac{h!}{\tau \cdot h!} = \frac{1}{\tau} \). This result matches \eqref{eq:R_general_explicit_no_numeration}.

To compute an explicit formula for inductive complexes using the second method, we introduce
\[
{}_{h_0}J_n = ((h-2)n + h_0) \cdot {}_{h_0}\widehat{T}_n; \quad {}_{h_0}J(t) = \sum_{n=0}^{\infty} {}_{h_0}J_n t^n.
\]
Multiply both sides by \( t^n \) in \eqref{eq:app:eq:T_general_recurrence} and sum over \( n \geq 1 \)
\[
\sum_{n=1}^{\infty} n \cdot {}_{h_0}\widehat{T}_n t^n = r \sum_{n=1}^{\infty} \sum_{k=0}^{n-1} {}_{h_0}J_k S_{n-k} t^n.
\]
The left-hand side
\[
\sum_{n=0}^{\infty} n \cdot {}_{h_0}\widehat{T}_n t^n = \sum_{n=1}^{\infty} n \cdot {}_{h_0}\widehat{T}_n t^n = t \frac{\mathrm{d}}{\mathrm{d}t} \left( \sum_{n=0}^{\infty} {}_{h_0}\widehat{T}_n t^n \right) = t \frac{\mathrm{d}}{\mathrm{d}t} \left({}_{h_0}\widetilde{\mathcal{T}}(t)\right).
\]
The right-hand side equals
\[
r \cdot {}_{h_0} J(t) \cdot S(t) = r \left[ (h-2)t \cdot {}_{h_0}\widetilde{\mathcal{T}}'(t) + h_0 \cdot {}_{h_0}\widetilde{\mathcal{T}}(t) \right] S(t).
\]
Thus, we obtain
\[
t \cdot {}_{h_0}\widetilde{\mathcal{T}}'(t) = r \left[ (h-2)t \cdot {}_{h_0}\widetilde{\mathcal{T}}'(t) + h_0 \cdot {}_{h_0}\widetilde{\mathcal{T}}(t) \right] S(t).
\]
Transform
\[
\left[1 - r (h-2) S(t)\right] t \cdot {}_{h_0}\widetilde{\mathcal{T}}'(t) - r h_0 S(t) \cdot {}_{h_0} \widetilde{\mathcal{T}}(t) = 0.
\]
Separation of variables
\[
\frac{\mathrm{d} ({}_{h_0}\widetilde{\mathcal{T}}(t))}{{}_{h_0}\widetilde{\mathcal{T}}(t)} = \frac{r h_0 S(t)}{\left[1 - r (h-2) S(t)\right] t} \mathrm{d}t.
\]
Earlier, we obtained
\[
\frac{\mathrm{d}t}{t} = \frac{[1 - r (h-2) S]}{S (1 + r S)} \mathrm{d}S.
\]
Substituting into the integral
\[
\int \frac{S(t)}{\left[1 - r (h-2) S(t)\right] t} \mathrm{d}t = \int \frac{S}{ [1 - r (h-2) S]} \cdot \frac{[1 - r (h-2) S]}{S (1 + r S)} \mathrm{d}S = \int \frac{\mathrm{d}S}{1 + r S}.
\]
From this, \({}_{h_0}\widetilde{\mathcal{T}}(t)\)
\[
{}_{h_0}\widetilde{\mathcal{T}}(t) = \widetilde{K}_1 \exp\left( r h_0 \cdot \frac{1}{r} \ln|1 + r S(t)| \right) = \widetilde{K}_2 (1 + r S(t))^{h_0}.
\]
From the initial condition \({}_{h_0}\widetilde{\mathcal{T}}(0) = {}_{h_0}\widehat{T}_0\) we deduce $\widetilde{K}_2 = {}_{h_0}\widehat{T}_0$, i.e.
\begin{equation} \label{eq:app:y_T_connection}
{}_{h_0}\widetilde{\mathcal{T}}(t) = {}_{h_0}\widehat{T}_0 (1 + r S(t))^{h_0} = {}_{h_0}\widehat{T}_0 \widetilde{y}^{\,h_0},
\end{equation}
where \( \widetilde{y} = 1 + r S(t) \). Given \( S = \frac{\widetilde{y} - 1}{r} \), substitute into the equation \eqref{eq:app:another_way_equation}
\begin{equation} \label{eq:app:another_y_equation}
t = \frac{\frac{\widetilde{y} - 1}{r}}{h\widehat{R}_1 \widetilde{y}^{\,h-1}} = \frac{\widetilde{y} - 1}{h\widehat{R}_1 r \widetilde{y}^{\,h-1}}.  
\end{equation}
Let \( \phi(\widetilde{y}) = h\widehat{R}_1 r \widetilde{y}^{\,h-1} \), then
\[
t = \frac{\widetilde{y} - 1}{\phi(\widetilde{y})} \implies \widetilde{y} = 1 + t \phi(\widetilde{y}).
\]
Apply the Lagrange-Bürmann formula with shift \(1\) for \( Q(y) = \widetilde{y}^{\, h_0} \)
\[
Q(y) = Q(1) + \sum_{n=1}^{\infty} \frac{t^n}{n!} \left[ \frac{\mathrm{d}^{n-1}}{\mathrm{d} u^{n-1}} \left( Q'(u) \phi(u)^n \right) \right]_{u=1}.
\]
In our case
\[
Q(\widetilde{y}) = 1 + \sum_{n=1}^{\infty} \frac{t^n}{n!} \left[ \frac{\mathrm{d}^{n-1}}{\mathrm{d} u^{n-1}} \left( h_0 u^{h_0-1} \cdot (h\widehat{R}_1 r u^{h-1})^n \right) \right]_{u=1}.
\]
Compute the derivative
\begin{align*}
\frac{\mathrm{d}^{n-1}}{\mathrm{d} u^{n-1}} u^{h_0 - 1 + n(h-1)} = \left( \prod_{k=0}^{n-2} \left( h_0 - 1 + n(h-1) - k \right) \right) \cdot u^{\,h_0 - 1 + n(h-1) - (n-1)}.
\end{align*}
Given that we substitute \( u=1 \) and \({}_{h_0}\widetilde{\mathcal{T}}(t) = {}_{h_0}\widehat{T}_0 \widetilde{y}^{\, h_0} = {}_{h_0}\widehat{T}_0 Q(\widetilde{y}(t)) \), we obtain
\[
{}_{h_0}\widetilde{\mathcal{T}}(t) = {}_{h_0}\widehat{T}_0 + \sum_{n=1}^{\infty} {}_{h_0}\widehat{T}_0 \frac{t^n}{n!} h_0 (h\widehat{R}_1 r)^n \frac{(h_0 + n(h-1)-1)!}{(h_0 + n(h-1) - n)!}.
\]
This means that
\[
{}_{h_0}\widehat{T}_n = {}_{h_0}\widehat{T}_0 \frac{h_0}{n!} \left( \frac{h r}{\tau} \right)^{n} \cdot \frac{(h_0 + n(h-1) - 1)!}{(h_0 + n(h-2))!}.
\]
Given that \({}_{h_0}\widehat{T}_0 = \frac{{}_{h_0}T_0}{h_0!}\), our result coincides exactly with \eqref{eq:T_general_explicit_no_numeration}.

Note that also for the second method, equation \eqref{eq:app:another_y_equation} allows us to obtain a closed-form expression for the generating functions, analogous to the one derived for the first method in Appendix \ref{sec:explicit_expressions_generating_function}. Since \(\widehat{R}_1 = \frac{1}{\tau}\), for a regular polytope we have \( h \widehat{R}_1  r = \frac{hr}{\tau} = 1 \). Consequently, the equation \eqref{eq:app:another_y_equation} simplifies to the following form
\begin{equation} \label{eq:y_tilde_general}
t  = \frac{\widetilde{y} - 1}{\widetilde{y}^{\,h-1}}.
\end{equation}
Instead of solving this equation, we can reduce it to the known equation $\widehat{y}^{\, h-1} - \widehat{y} + \omega = 0$, as we did in Corollary \ref{cor:lagrange_application2}. Multiplying equation \eqref{eq:y_tilde_general} by $t^{\frac{1}{h-2}}$ yields
\[
t^{\frac{h-1}{h-2}}\widetilde{y}^{\, h-1} - t^{\frac{1}{h-2}}\widetilde{y} + t^{\frac{1}{h-2}} = 0.
\]
By introducing the substitutions \(\omega = t^{\frac{1}{h-2}}\) and \(\widehat{y} = \widetilde{y} \omega = \widetilde{y} t^{\frac{1}{h-2}}\), the terms group together to form:
\begin{equation} \label{eq:y_standard_reduced}
\widehat{y}^{\, h-1} - \widehat{y} + \omega = 0.
\end{equation}
We have already solved this equation under the condition \(\widehat{y}(0)=0\); now we verify its uniqueness. Consider the function \(F(\widehat{y},\omega) = \widehat{y}^{\, h-1} - \widehat{y} + \omega\). At the point \((0,0)\)
\[
F(0,0) = 0,\qquad \frac{\partial F}{\partial y}\bigg|_{(0,0)} = \left((h-1)\widehat{y}^{\, h-2} - 1\right)\big|_{\widehat{y}=0} = -1 \neq 0.
\]
Hence, by the implicit function theorem, there exists a unique analytic function \(\widehat{y}(\omega)\) in a neighbourhood of \(\omega=0\) satisfying \(\widehat{y}(0)=0\) and \(F(\widehat{y}(\omega),\omega)=0\).

Notice that since \(\widetilde{y}(0) = 1 + r S(0) = 1\) and by continuity, the initial condition \(\widetilde{y}(t) \to 1\) implies
\[
\lim_{\omega \to 0} \frac{\widehat{y}(\omega)}{\omega} = 1 \quad\Longrightarrow\quad \widehat{y}(\omega) \sim \omega.
\]
Hence \(\widehat{y}(0) = 0\), and we can apply the previous results.

In summary, we take the expressions for \(\widehat{y}\) from Section~\ref{sec:explicit_expressions_generating_function} in terms of \(\omega\), apply the substitutions $\widetilde{y} = \widehat{y}/\omega$ and \(\omega = t^{1/(h-2)}\), insert the result into \(\widetilde{y} = 1 + r S(t)\), and finally substitute the resulting expression into \eqref{eq:app:y_R_S_connection} and \eqref{eq:app:y_T_connection} together with the polytope-specific parameters \(h\) and \(r\). This yields closed-form generating functions for the degree indexing via the second method; the remaining details are omitted for brevity and left to the reader.

\section{Proof of Lemma \ref{lem:correctness_U}} \label{sec:correctness_U}

\begin{proof}
We  prove the lemma by double induction: first on $n$, and for each fixed $n$, by induction on $\mathsf{m}$.
Recall that
\[
S_1 = \frac{r}{2} (\mathsf{m}+1)(\mathsf{m}+2) \widehat{V}_{n, \mathsf{m}+2}, \quad S_2 = \frac{r}{2} \sum_{l=1}^{n-1} \sum_{\mathsf{m}_1 + \mathsf{m}_2 = \mathsf{m}+2} \mathsf{m}_1 \mathsf{m}_2 \, \widehat{V}_{n-l, \mathsf{m}_1} \, V_{l, \mathsf{m}_2}.
\]

\paragraph{Base case: $n = 1$.}
We prove by induction on $\mathsf{m} \le -1$ that $\widehat{V}_{1,\mathsf{m}} = 0$. Note that the sum $S_2$ is empty (since $n = 1$ and the upper limit for \(l\) is \(0\))

For $\mathsf{m} = -1$ or $\mathsf{m} = -2$, the term $S_1$ vanishes because $(\mathsf{m}+1)(\mathsf{m}+2) = 0$. Hence the recurrence gives
\[
\frac{h - \mathsf{m}}{2}\,\widehat{V}_{1,\mathsf{m}} = 0,
\]
and, as \(\frac{h - \mathsf{m}}{2} > \frac{h}{2} > 0\), it follows that \(\widehat{V}_{1,\mathsf{m}} = 0\) for $\mathsf{m} \in \{-1, -2\}$.

Now assume that the assertion has already been proved for all indices 
\(t\) with \(\mathsf{m}+1 \le t \le -1\). For the term \(S_1\), 
observe that \(\mathsf{m}+2\) lies in the range \(\mathsf{m}+1 \le \mathsf{m}+2 \le -1\). 
Thus the induction hypothesis applies and gives \(\widehat{V}_{1,\mathsf{m}+2} = 0\), 
so \(S_1 = 0\). Both terms vanish and since \((h - \mathsf{m})/2 > 0\), it follows that \(\widehat{V}_{1,\mathsf{m}} = 0\) for \(\mathsf{m} \le -1\).

\paragraph{Inductive step for $n > 1$.}
Assume that for all $k < n$ and all $y \le -1$, we have $\widehat{V}_{k,y} = 0$. We now prove, by induction on $\mathsf{m} \le -1$, that $\widehat{V}_{n,\mathsf{m}} = 0$.

For $\mathsf{m} = -1$ or $\mathsf{m} = -2$, we have $(\mathsf{m}+1)(\mathsf{m}+2) = 0$, so $S_1 = 0$. We claim $S_2 = 0$: indeed, for any term in the double sum, $\mathsf{m}_1 + \mathsf{m}_2 = \mathsf{m}+2 \le 1$. Thus at least one of \(\mathsf{m}_1, \mathsf{m}_2\) is \(\le 0\), since otherwise 
their sum would be at least \(2\). If $\mathsf{m}_i = 0$, the term vanishes due to the factor $\mathsf{m}_i$. If $\mathsf{m}_i \le -1$, then the corresponding factor $\widehat{V}_{k, \mathsf{m}_i}$ has $k < n$, so by the outer induction hypothesis it is zero. Hence $S_2 = 0$, and substituting $S_1 = S_2 = 0$ into the recurrence gives
\[
\frac{h n - \mathsf{m}}{2}\,\widehat{V}_{n,\mathsf{m}} = 0.
\]
The coefficient $\frac{h n - \mathsf{m}}{2} > 0$ (since $h \ge 2$, $n \ge 1$, and $\mathsf{m} \le -1$). Therefore $\widehat{V}_{n,\mathsf{m}} = 0$.

Now assume that the assertion has already been proved for  all indices \(t\) with \(\mathsf{m}+1 \le t \le -1\). For the current \(\mathsf{m} \le -3\), we again have \(S_2 = 0\): indeed, for any term in the double sum, \(\mathsf{m}_1 + \mathsf{m}_2 = \mathsf{m}+2 \le -1\). Thus at least one of \(\mathsf{m}_1, \mathsf{m}_2\) must be \(\le -1\) (since if both were \(\ge 0\), their sum would be \(\ge 0\)). The corresponding factor \(\widehat{V}_{k, \mathsf{m}_i}\) has \(k < n\), so by the outer induction hypothesis it is zero. Hence \(S_2 = 0\).

It remains to handle \(S_1 = \frac{r}{2}(\mathsf{m}+1)(\mathsf{m}+2)\,\widehat{V}_{n,\mathsf{m}+2}\). Since \(\mathsf{m}+2\) satisfies \(\mathsf{m}+1 \le \mathsf{m}+2 \le -1\), the inner induction hypothesis applies and gives \(\widehat{V}_{n,\mathsf{m}+2} = 0\). Consequently, \(S_1 = 0\) and therefore \(\widehat{V}_{n,\mathsf{m}} = 0\). This completes the induction step and the proof.
\end{proof}

\section{Maximum of \texorpdfstring{\(\widehat{W}_{n,\mathsf{m}}\)}{W(n,m)} for Fixed \texorpdfstring{\(n\)}{n}}
\label{sec:maximum_W}
To maximize \(\widehat{W}_{n,\mathsf{m}}\) for a fixed sufficiently large \(n\) we need to check when the value \(\frac{r (\mathsf{m}+2)(\mathsf{m}+1)}{h n - \mathsf{m}}\) in the recurrence \eqref{eq:W_main_recurrence} is greater than 1. The point of monotonicity change \(\mathsf{m}^*\) is the root of the equation
\[
\frac{h n - \mathsf{m}}{r (\mathsf{m}+1)(\mathsf{m}+2)} = 1 \quad \Longleftrightarrow \quad r \mathsf{m}^2 + (3r + 1)\mathsf{m} + 2r - h n = 0.
\]
The positive solution is
\[
\mathsf{m}^* = \frac{-(3r + 1) + \sqrt{r^2 + 6r + 1 + 4 h r n}}{2r}.
\]
Now we have several options for the maximum value:

1. \(\mathsf{m} = 0\) (lower bound),

2. \(\mathsf{m} = h n\) (upper bound),

3. If \(\mathsf{m}^*\) is a natural number, then $\widehat{W}_{n,\mathsf{m}^*} = \widehat{W}_{n,\mathsf{m}^*+2}$, so the maximum is attained at the two consecutive admissible values $\mathsf{m}^*$ and $\mathsf{m}^*+2$. Otherwise, we define \(\mathsf{m}_1\) as the largest admissible \(\mathsf{m} \le \mathsf{m}^*\)  and set \(\mathsf{m}_2 = \mathsf{m}_1 + 2\) as the next admissible value. Consider
\[
\frac{\widehat{W}_{n, h n}}{\widehat{W}_{n, h n - 2}} = \frac{2}{r \cdot h n (h n - 1)}.
\]
This ratio is less than 1. Therefore, this point cannot be a maximum. Next, we analyze
\[
\frac{\widehat{W}_{n, 2}}{\widehat{W}_{n, 0}} = \frac{h n}{2 r}.
\]
If \(h n < 2 r\), then \(\widehat{W}_{n, 2} < \widehat{W}_{n, 0}\). However, for large \(n\), this is not true. Thus, the maximizers are \(\mathsf{m}^*\) and $\mathsf{m}^*+2$ if $\mathsf{m}^*$ is a natural number; otherwise, the maximum is attained either at \(\mathsf{m}_1\) or at \(\mathsf{m}_2\). The choice between \(\mathsf{m}_1\) and \(\mathsf{m}_2\) as the maximizer depends on the parameters.

\begin{rem}
The case of odd $\mathsf{m}$ is treated analogously, and we leave it to the reader.
\end{rem}

\section{Proof of the Implication \texorpdfstring{\(\eqref{eq:T_general_combinatorial} \Rightarrow \eqref{eq:R_general_combinatorial}\)}{T implies R}} \label{sec:T_implies_R}
\begin{proof}
Substituting $h_0 = 1$ into \eqref{eq:T_general_combinatorial} and simplifying, we obtain
$$1 = \frac{1}{n} \sum\limits_{k=0}^{n-1} \binom{(h-2)n+1}{(h-2)(n-k)+1} \frac{\binom{n}{k}}{\binom{n(h-1)}{k(h-1)}}$$

Multiplying both sides by $n$ and evaluating the term for $k=0$ (which exactly equals $1$), we can subtract it from both sides to get
\begin{equation} \label{eq:n-1_tmp1}
n-1 = \sum\limits_{k=1}^{n-1} \binom{(h-2)n+1}{(h-2)(n-k)+1} \frac{\binom{n}{k}}{\binom{n(h-1)}{k(h-1)}}   
\end{equation}
Replacing \(k\) by \(n-k\) in the summation.
\begin{equation} \label{eq:n-1_tmp2}
n-1 = \sum\limits_{k=1}^{n-1} \binom{(h-2)n+1}{(h-2)k+1} \frac{\binom{n}{k}}{\binom{n(h-1)}{k(h-1)}}.
\end{equation}
By the symmetry of binomial coefficients $\binom{A}{B} = \binom{A}{A-B}$, we can also rewrite \eqref{eq:n-1_tmp1}
\begin{equation} \label{eq:n-1_tmp3}
n-1 = \sum\limits_{k=1}^{n-1} \binom{(h-2)n+1}{(h-2)k} \frac{\binom{n}{k}}{\binom{n(h-1)}{k(h-1)}}.
\end{equation}
Adding \eqref{eq:n-1_tmp2} and \eqref{eq:n-1_tmp3} and applying Pascal's identity $\binom{N}{K} + \binom{N}{K-1} = \binom{N+1}{K}$, we finally deduce
$$2(n-1) = \sum\limits_{k=1}^{n-1} \binom{(h-2)n+2}{(h-2)k+1} \frac{\binom{n}{k}}{\binom{n(h-1)}{k(h-1)}}.$$   
\end{proof}

\end{appendices}

\end{document}